\theoremstyle{plain}                          
\theoremstyle{definition}                     
\newtheorem{definition}{Definition}[section]
\newtheorem{theorem}{Theorem}[section]
\newtheorem{lemma}{Lemma}[section]    
\newtheorem{convention}{Convention}[section] 
\newtheorem{proposition}{Proposition}[section]
\newtheorem{corollary}{Corollary}[section]
\newtheorem{remark}{Remark}[section]
\theoremstyle{remark}                         
\newcommand{\field}[1]{\mathbb{#1}} 
\newcommand{\C}{\mathcal{C}} 
\newcommand{\Z}{\field{Z}} 
\newcommand{\K}{\field{K}}
\newcommand{\Q}{\field{Q}}
\renewcommand{\P}{\mathcal{P}}
\title{Algebraic Hopf Invariants and Rational Models for Mapping Spaces}
\author{Felix Wierstra}
\date{}
\begin{document}

\maketitle

\abstract{The main goal of this paper is to  define an invariant $mc_{\infty}(f)$ of homotopy classes of maps $f:X \rightarrow Y_{\Q}$, from a finite CW-complex $X$ to a rational space $Y_{\Q}$. We prove that this invariant is complete, i.e. $mc_{\infty}(f)=mc_{\infty}(g)$ if an only if $f$ and $g$ are homotopic. 

To construct this invariant we also construct a homotopy Lie algebra structure on certain convolution algebras. More precisely, given an operadic twisting morphism from a cooperad $\mathcal{C}$ to an operad $\mathcal{P}$, a $\C$-coalgebra $C$ and a $\P$-algebra $A$,  then there exists a natural homotopy Lie algebra structure on $Hom_\K(C,A)$, the set of linear maps from $C$ to $A$. We prove some of the basic properties of this convolution homotopy Lie algebra and use it to construct the algebraic Hopf invariants. This convolution homotopy Lie algebra also has the property that it can be used to models mapping spaces. More precisely, suppose that $C$ is a $C_\infty$-coalgebra model for a simply-connected finite CW-complex $X$ and $A$ an $L_\infty$-algebra model for a simply-connected rational space $Y_{\Q}$ of finite $\Q$-type, then $Hom_\K(C,A)$, the space of linear maps from $C$ to $A$, can be equipped with an $L_\infty$-structure such that it becomes a rational model for the based mapping space $Map_*(X,Y_\Q)$.


\tableofcontents

\section{Introduction}

One of the most elementary questions in algebraic topology is: "Given two maps $f,g:X \rightarrow Y$, are $f$ and $g$ homotopic?" In general this is an extremely hard question but a lot of partial progress has been made. One example is the Hopf invariant, originally this was an invariant of maps $f:S^{4n-1}\rightarrow S^{2n}$ and can for example be used to show that the Hopf fibration is not null homotopic. The Hopf invariant has been generalized by many people, (see for exampe \cite{SW2} and \cite{BT1} and their references for more details about this), but one of the most recent generalizations is due to Sinha and Walter (see \cite{SW2}). They generalize the Hopf invariant to maps $f:S^n \rightarrow Y_{\Q}$, from a sphere to a rational space $Y_{\Q}$ and prove that this generalization is a complete invariant, i.e. two maps $f$ and $g$ are homotopic if and only if their Hopf invariants coincide. 

The main goal of this paper is to generalize the work of Sinha and Walter to maps between arbitrary spaces. We do this by defining a generalization of their Hopf invariant, which is a function $mc_{\infty}:Map_*(X,Y_{\Q}) \rightarrow \mathcal{MC}(X,Y)$ from the set of pointed maps between a finite simply-connected CW complex $X$ and a simply-connected rational space $Y_{\Q}$, to the moduli space of Maurer-Cartan elements in a certain $L_{\infty}$-algebra associated to $X$ and $Y$. The main theorem of this paper is given by:

\begin{theorem}
 Let $f,g:X \rightarrow Y$ be two maps between a finite simply-connected CW-complex $X$ and a simply-connected space $Y$, then $f$ and $g$ are homotopic in the model category of rational spaces if and only if $mc_{\infty}(f)=mc_{\infty}(g)$.
\end{theorem}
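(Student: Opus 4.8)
The plan is to reduce the theorem to a purely algebraic bijection between the set of pointed rational homotopy classes $[X,Y_{\Q}]_*$ and the moduli space $\mathcal{MC}(X,Y)$ of Maurer--Cartan elements, and then to recognise $mc_{\infty}$ as an explicit model for that bijection.

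First I would fix algebraic models: a conilpotent $C_{\infty}$-coalgebra model $C$ for $X$, which is of finite type in each degree because $X$ is a finite CW-complex, and an $L_{\infty}$-algebra model $L$ for $Y$ (a Quillen/Harrison--Lie model), which is well-behaved because $Y$ is simply connected. The $L_{\infty}$-algebra underlying $\mathcal{MC}(X,Y)$ is then the convolution $L_{\infty}$-algebra $\mathrm{Hom}(C,L)$, whose structure maps converge and whose Maurer--Cartan set is well-defined precisely because $C$ is conilpotent and finite type; this is exactly where finiteness of $X$ enters, and without it the convolution algebra need not be complete and $\mathcal{MC}(X,Y)$ could be ill-behaved. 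Next I would set up the algebraic dictionary: Maurer--Cartan elements of $\mathrm{Hom}(C,L)$ are in natural bijection with $L_{\infty}$-morphisms from the Lie model of $X$ to that of $Y$ (equivalently, morphisms of dg coalgebras from $C$ into the Chevalley--Eilenberg coalgebra of $L$), and two such elements are gauge equivalent if and only if the corresponding morphisms are homotopic, e.g. via a path object $\mathrm{Hom}(C,L)\otimes\Omega^{\bullet}[0,1]$ or a cylinder in the Quillen model category. Passing to $\pi_0$ and invoking Quillen's equivalence of rational homotopy categories, valid since $X$ is simply connected of finite type and $Y$ is simply connected, this yields a bijection $\mathcal{MC}(X,Y)\cong[X,Y_{\Q}]_*$.

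The remaining and principal step is to check that the function $mc_{\infty}$ constructed in the paper actually realises this bijection: that for a map $f:X\to Y_{\Q}$ the element $mc_{\infty}(f)$ coincides, up to gauge, with the Maurer--Cartan element extracted from a Lie (or Sullivan) model of $f$. Granting this, injectivity of $mc_{\infty}$ on homotopy classes is immediate, and well-definedness, i.e. that homotopic maps give gauge-equivalent Maurer--Cartan elements, follows from the homotopy invariance built into the dictionary; together these give both implications of the theorem.

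The hard part will be this last identification: $mc_{\infty}$ is defined on honest (based) maps, presumably via a cochain-level construction in the spirit of Sinha--Walter's Hopf/linking pairings, whereas the dictionary above lives at the level of strict algebraic models. Matching the two requires a naturality argument together with careful control of the $C_{\infty}$- and $L_{\infty}$-transfer data, so that choosing minimal models, transferring structures along homotopy equivalences, and applying the Maurer--Cartan correspondence all commute with the geometric construction of $mc_{\infty}$. A secondary subtlety is checking that gauge equivalence in $\mathrm{Hom}(C,L)$ is genuinely an equivalence relation that matches homotopy of Maurer--Cartan elements, which again rests on the pronilpotence afforded by the finiteness hypothesis on $X$.
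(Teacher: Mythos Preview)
Your overall strategy---reduce to algebra via Quillen's equivalence, identify Maurer--Cartan elements in the convolution $L_{\infty}$-algebra with morphisms via the bar--cobar adjunction, and match gauge equivalence with homotopy---is exactly the paper's approach, and is correct. The paper proves a purely algebraic completeness theorem for $\mathcal{C}$-coalgebra maps $C\to D$ and then deduces the topological statement by applying Quillen's functor $\mathcal{C}\lambda$; the algebraic theorem is established by the chain of bijections
\[
[C,D]\;\cong\;[\Omega_{\iota}C,\Omega_{\iota}D]\;\cong\;[\Omega_{\iota}C,H_*(\Omega_{\iota}D)]\;\cong\;[\Omega_{\iota}H_*(C),H_*(\Omega_{\iota}D)]\;\cong\;\mathcal{MC}\bigl(H_*(C),H_*(\Omega_{\iota}D)\bigr),
\]
justified in turn by the Quillen equivalence $\Omega_{\iota}\dashv B_{\iota}$, the acyclic-fibration lemma, Ken Brown's lemma, and the identification of gauge equivalence with homotopy.

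Where your proposal drifts from the paper is in the ``hard part'' you single out. You conjecture that $mc_{\infty}$ is a geometric cochain-level construction in the style of Sinha--Walter that must then be matched against the abstract bijection. In fact $mc_{\infty}$ is \emph{defined} algebraically as $f\mapsto p\circ\Omega_{\iota}f\circ\Omega_{\iota}i$ (followed by passage to gauge classes), where $i:H_*(C)\to C$ is an $\infty$-quasi-isomorphism from the homotopy transfer theorem and $p:\Omega_{\iota}D\to H_*(\Omega_{\iota}D)$ is a strict quasi-isomorphism whose existence the paper establishes separately. So $mc_{\infty}$ is literally the composite of the displayed bijections, and no separate identification step is needed. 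The genuine work you are underweighting is (i) producing the strict map $p$ (rectification of the $\infty$-quasi-isomorphism via the bar--cobar resolution and cofibrancy of $\Omega_{\iota}D$), and (ii) justifying the middle two bijections with the model-category lemmas above; your sketch gestures at these but does not name the tools.
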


To prove this theorem, we will first generalize the statement of the theorem and define a complete invariant of homotopy classes of maps between coalgebras over a cooperad $\mathcal{C}$. To do this it will be necessary to define an $L_{\infty}$-algebra structure on the convolution algebra between a $\mathcal{C}$-coalgebra and a $\Omega_{op} \mathcal{C}$-algebra, where $\Omega_{op} \mathcal{C}$ is the operadic cobar construction on the cooperad $\mathcal{C}$. We will do this in a more general setting and define an $L_{\infty}$-structure on the convolution algebra relative to an operadic twisting morphism in the following theorem.


\begin{theorem}\label{thrm11}
 Let $\tau:\mathcal{C}\rightarrow \mathcal{P}$ be an operadic twisting morphism, let $C$ be a $\mathcal{C}$-coalgebra and let  $L$ be a $\mathcal{P}$-algebra, then there exists a natural $L_{\infty}$-structure on the convolution algebra $Hom_{\K}(C,L)$.
\end{theorem}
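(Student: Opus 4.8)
The plan is to obtain the $L_\infty$-structure on the convolution algebra as the image of $\tau$ under a composite of two standard, functorial constructions, and only afterwards to unwind it into explicit brackets. Here $C$ is a coalgebra over the cooperad $\mathcal{C}$ (so that $Hom_{\K}(C,L)$ is literally a ``convolution algebra''). The first step is to recall the convolution operad $\mathcal{H}:=Hom_{\mathbb{S}}(\mathcal{C},\mathcal{P})$, whose arity-$n$ component is $Hom_{\mathbb{S}_n}(\mathcal{C}(n),\mathcal{P}(n))$ and whose operadic composition combines the decomposition map of $\mathcal{C}$ with the composition map of $\mathcal{P}$ (the differential being induced by those of $\mathcal{C}$ and $\mathcal{P}$). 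The key observation at this stage is that the $\mathcal{C}$-coalgebra structure on $C$ (equivalently a cooperad map $\mathcal{C}\to coEnd_C$, i.e.\ a structure map $C\to\mathcal{C}\circ C$) together with the $\mathcal{P}$-algebra structure on $L$ (an operad map $\mathcal{P}\to End_L$) assemble into a morphism of dg operads
\[
\mathcal{H}=Hom_{\mathbb{S}}(\mathcal{C},\mathcal{P})\longrightarrow Hom_{\mathbb{S}}(\mathcal{C},End_L)\longrightarrow End_{Hom_{\K}(C,L)},
\]
the first arrow being post-composition with $\mathcal{P}\to End_L$, and the second sending $\theta\colon\mathcal{C}(n)\to Hom(L^{\otimes n},L)$ together with $f_1,\dots,f_n\colon C\to L$ to the composite $C\xrightarrow{\Delta}\mathcal{C}(n)\otimes_{\mathbb{S}_n}C^{\otimes n}\xrightarrow{\theta\otimes f_1\otimes\cdots\otimes f_n}L$. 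Checking that this respects operadic composition and differentials is a diagram chase using co-associativity of $\Delta$ and associativity of $\gamma_L$. In particular $Hom_{\K}(C,L)$ becomes a dg algebra over $\mathcal{H}$.

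The second step is to recall that every dg operad $\mathcal{O}$ carries an intrinsic dg Lie algebra $\mathfrak{L}(\mathcal{O})$, built from the pre-Lie product $a\mapsto\sum_i a\circ_i b$ on a suitable (de)suspension of $\prod_n\mathcal{O}(n)^{\mathbb{S}_n}$, that $\mathcal{O}\mapsto\mathfrak{L}(\mathcal{O})$ is functorial, and that Maurer--Cartan elements of $\mathfrak{L}(End_V)$ are exactly $L_\infty$-structures on $V$. Applied to $\mathcal{O}=\mathcal{H}$, one identifies $\mathfrak{L}(\mathcal{H})$ with the convolution pre-Lie (hence Lie) algebra whose Maurer--Cartan equation is the defining equation $\partial\tau+\tau\star\tau=0$ of an operadic twisting morphism; so $\tau\colon\mathcal{C}\to\mathcal{P}$ is by hypothesis a Maurer--Cartan element of $\mathfrak{L}(\mathcal{H})$. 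Pushing $\tau$ forward along the Lie algebra morphism $\mathfrak{L}(\mathcal{H})\to\mathfrak{L}(End_{Hom_{\K}(C,L)})$ obtained by applying $\mathfrak{L}$ to the operad map of the previous paragraph produces a Maurer--Cartan element of $\mathfrak{L}(End_{Hom_{\K}(C,L)})$, i.e.\ an $L_\infty$-structure on $Hom_{\K}(C,L)$. This is the ``shorter and more conceptual'' argument; unwinding it shows that the $n$-ary bracket is the graded-symmetrization of $f_1\otimes\cdots\otimes f_n\mapsto\gamma_L\circ(\tau\otimes f_1\otimes\cdots\otimes f_n)\circ\Delta$, which is the formula appearing in Dolgushev--Hoffnung--Rogers and in Loday--Vallette in their respective special cases, and gives the ``longer'' proof once one checks the $L_\infty$-relations directly.

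I expect the main obstacle to be bookkeeping rather than conceptual: one must fix the (de)suspension and sign conventions in $\mathfrak{L}(-)$ so that, simultaneously, the Maurer--Cartan equation of $\mathfrak{L}(\mathcal{H})$ becomes verbatim the twisting-morphism equation for $\tau$ and a Maurer--Cartan element of $\mathfrak{L}(End_V)$ becomes verbatim an $L_\infty$-structure; these are exactly the signs that make the operadic pre-Lie product on $\mathcal{H}$ agree with the convolution pre-Lie product. A secondary issue is convergence of the a priori infinite products defining $\mathfrak{L}(\mathcal{H})$ and the brackets $\ell_n$: one restricts to conilpotent $\mathcal{C}$-coalgebras, or works with a weight grading / complete filtration so that for fixed inputs only finitely many arities contribute (in the topological application this is ensured by $X$ being a finite complex). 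Granting these conventions, the direct verification of the $L_\infty$-relations reduces, just as the conceptual argument predicts, to co-associativity of $\Delta$, associativity of $\gamma_L$, and the Maurer--Cartan equation for $\tau$.
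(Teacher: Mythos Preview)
Your proposal is correct and follows essentially the same route as the paper. Both arguments (i) show that $Hom_{\K}(C,L)$ is an algebra over the convolution operad $\mathcal{H}=Hom(\mathcal{C},\mathcal{P})$, (ii) identify the operadic twisting morphism $\tau$ with a Maurer--Cartan element in the pre-Lie algebra attached to $\mathcal{H}$, and (iii) convert this into an $L_\infty$-structure; the only cosmetic difference is that the paper phrases step (iii) as ``a Maurer--Cartan element in the pre-Lie algebra of $\mathcal{Q}$ is, via the isomorphism $Hom(\mathcal{COCOM},\mathcal{Q})\cong\mathcal{Q}$, the same as a twisting morphism $\mathcal{COCOM}\to\mathcal{Q}$, hence an operad map $L_\infty=\Omega_{op}\mathcal{COCOM}\to\mathcal{Q}$'', whereas you phrase it as ``push $\tau$ forward along $\mathfrak{L}(\mathcal{H})\to\mathfrak{L}(End_{Hom_{\K}(C,L)})$ and use that Maurer--Cartan elements of $\mathfrak{L}(End_V)$ are $L_\infty$-structures on $V$''---these are two ways of saying the same thing.
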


This $L_{\infty}$-structure generalizes a construction from a paper by Dolgushev, Hoffnung and Rogers \cite{DHR1} and a construction from the book by Loday and Vallette (see \cite{LV}). In \cite{DHR1}, Dolgushev, Hoffnung and Rogers construct a special case of this $L_{\infty}$-structure for the canonical twisting morphism $\iota:\mathcal{C} \rightarrow \Omega_{op} \mathcal{C}$ and in \cite{LV}, Loday and Vallette define a Lie algebra relative to a binary quadratic twisting morphism. We will also give a shorter and more conceptual alternative proof for this theorem.

In \cite{RNW1}, the  $L_{\infty}$-structure from Theorem \ref{thrm11} is used  to prove that the convolution algebra becomes a rational model for the mapping space $Map_*(X,Y_{\Q})$. This is a generalization of a theorem by Berglund  about models for mapping spaces (see Theorem 1.4 of \cite{Berg1}).

\begin{theorem}[\cite{RNW1} Corollary 8.19]\label{thrm3}
 Let $C$ be a $C_{\infty}$-coalgebra model of finite type for a simply-connected CW-complex $X$ of finite $\Q$-type, such that $C$ is concentrated in degrees greater or equal than $2$. Let $L$ be an $L_{\infty}$-model for a simply-connected rational space $Y_{\Q}$ of finite $\Q$-type, then there exists an  $L_{\infty}$-structure on the space $Hom_{\K}(C,L)$, such that $Hom_{\K}(C,L)$ becomes a rational $SL_\infty$-model for  $Map_*(X,Y_{\Q})$. In particular, 
 \begin{enumerate}
\item  There is a bijection between $[X,Y_{\Q}]$ and $ \mathcal{MC}(Hom_{\K}(C,L))$ . 
\item For every Maurer-Cartan element $\tau$ we have an isomorphism 
$$\pi_n(Map_*(X,Y_{\Q}),\tau)\otimes \Q \simeq H_n(Hom_{\K}(C,L)^{\tau}).$$
 \end{enumerate}
Where $\mathcal{MC}(Hom_{\K}(C,L))$ denotes the moduli space of Maurer-Cartan elements in $Hom_\K(C,L)$. The homotopy groups are taken with respect to the base point given by the map corresponding to the Maurer-Cartan element  $\tau$. By $Hom_{\K}(C,L)^{\tau}$ we denote the $L_\infty$-algebra $Hom_\K(C,L)$ twisted by the Maurer-Cartan element $\tau$.


\end{theorem}

This theorem is an improvement of Theorem 6.3 form \cite{Berg1}. In the paper by Berglund this theorem assumes that $C$ is a strictly coassociative cocommutative coalgebra, we improve this theorem by relaxing the assumption that $C$ has to be coassociative. In our construction $C$ can also be a $C_{\infty}$-coalgebra which is a cocommutative coalgebra which is only coassociative up to homotopy. 

As a corollary of this theorem we give an alternative proof of a theorem by Buijs and Guti\'errez which states that  $Hom_{\K}(\tilde{H}_*(X;\Q),\pi_*(Y)\otimes \Q)$ can be equipped with an $L_{\infty}$-structure such that it becomes a model for the mapping space $Map_*(X,Y)$.

\begin{corollary}
 Under the assumptions of Theorem \ref{thrm3}, there exists an explicit $L_{\infty}$-structure on the space $Hom_{\K}(\tilde{H}_*(X;\Q),\pi_*(Y)\otimes \Q)$, such that this becomes a rational model for the mapping space $Map_*(X,Y_{\Q})$ as in Theorem \ref{thrm3}.
\end{corollary}

\subsection{Acknowledgments}

The author would like to thank Alexander Berglund and Dev Sinha for many useful conversations and comments on this paper. The author would also like to thank Philip Hackney for answering many of his questions.

\section{Conventions}

In this paper we will follow the following conventions. 

\begin{convention}
 Throughout this paper we will assume that $\K$ is a field of characteristic $0$, and in the topological part of the paper we will work over the rationals. Further we will assume that all homology and cohomology groups are taken with rational coefficients. For simplicity we will often omit this from the notation.
\end{convention}

\begin{convention}
In this paper we will denote the symmetric group on $n$ letters by $\Sigma_n$. We assume that all the operads and cooperads we consider in this paper are symmetric. We also assume that all operads and cooperads are connected , i.e. $\mathcal{P}(0)=0 $ and $\mathcal{P}(1)=\K$ for operads and $\mathcal{C}(0)=0$ and $\mathcal{C}(1)=\K$ for cooperads. 
\end{convention}

\begin{convention}
We will assume that all cooperads and coalgebras we consider are conilpotent. This means that the coradical filtration is exhaustive.                                                                                                                                                                                                                                                                                                                                                                                                                                                                                                                                                                                                                                                                                                                                                                                                                                                                                                                                                                                                                                                  See Section 5.8.4 and 5.8.5 in \cite{LV} for a definition and more details. 
\end{convention}


\begin{convention}
In this paper we will tacitly assume that all spaces are based and that all the mapping spaces are spaces of pointed maps. We also assume that all spaces except for the mapping spaces are $1$-reduced, i.e. they have only one zero-cell and no one-cells. In particular, all the spaces, except for the mapping spaces, are assumed to be simply-connected. Since we are studying based mapping spaces we will always work with reduced homology and cohomology, so the notation $\tilde{H}_*(X)$ will mean the reduced homology of the space $X$ with coefficients in $\Q$. As a consequence of this all algebras and coalgebras will be non-(co)unitial.

We will also assume that all homotopy groups are rational homotopy groups, to ease the notation we will simply write $\pi_*(X)$ instead of $\pi_*(X) \otimes \Q$ to denote the rational homotopy groups of a space $X$.
\end{convention}

\begin{definition}
 Let $X$ and $Y$ be based spaces, then we denote the space of based maps from $X$ to $Y$ by $Map_*(X,Y)$.
\end{definition}

\begin{convention}
 In this paper we will always work with $\Z$-graded chain complexes, which we grade homologically, so the differential will always have degree $-1$. We will often omit the dg, and unless stated otherwise we implicitly assume that all operads, cooperads, algebras and coalgebras are taken in the category of chain complexes. Further, recall that we also use the Koszul sign rule, i.e. the exchange map  $\tau:V \otimes W \rightarrow W \otimes V$ is given by $\tau(v\otimes w)=(-1)^{\vert v \vert \vert w \vert}w \otimes v$ and therefore introduces a sign $(-1)^{\vert v \vert \vert w \vert}$. The supsension f a chain complex $V$ is denoted by $sV$ and is defined by $(sV)_n=V_{n-1}$. The linear dual of a chain complex $V$ is denoted by $V^{\vee}$ and is defined as $V^\vee:=Hom_\K(V,\K)$.
 
 The only exception to our grading convention is the cohomology of a space which we will grade cohomologically. We will call a chain complex simply-connected if it is concentrated in degrees greater or equal than $2$. 
\end{convention}

\begin{definition}
The rational homotopy Lie algebra of a space $X$ is denoted by $\pi_*(X)$ and the linear dual of the rational homotopy groups is denoted by $\pi^*(X)$ and will be called the cohomotopy groups. 
\end{definition}

\begin{remark}
Note that our definition of the cohomotopy groups is different from the standard notion of cohomotopy groups. The cohomotopy group $\pi^{n}(X)$ is not the set of homotopy classes of maps $[X,S^n]$.
\end{remark}

\part{Preliminaries}

\section{Twisting morphisms, Koszul duality \\ and bar constructions}\label{sectwistingmorphism}

In this section we recall the most important concepts about operads, cooperads, twisting morphisms and the bar and cobar constructions.

\subsection{Convolution algebras and twisting morphisms}\label{subsecMCeq}

In this paper we use several different bar constructions, in this section we will briefly recall the definitions and introduce some notation. All our conventions and definitions are based on the book \cite{LV}, unless stated otherwise.

\begin{definition}
The operadic bar construction on an operad $\mathcal{P}$ is denoted by $B_{op} \mathcal{P}$ and the operadic cobar construction on a cooperad $\mathcal{C}$ is denoted by $\Omega_{op} \mathcal{C}$.
\end{definition}

\begin{definition}
Let $\alpha:\mathcal{C} \rightarrow \mathcal{P}$ be an operadic twisting morphism, $(C,\Delta_C,d_C)$ a $\mathcal{C}$-coalgebra and $(A,\mu_A,d_A)$ a $\mathcal{P}$-algebra. The convolution algebra $Hom_{\K}(C,A)$ is the dg vector space of all linear maps from $C$ to $A$. The differential is defined by 
$$\partial(f)=d_A \circ f - (-1)^{\mid f \mid} f \circ d_C.$$
The Maurer-Cartan operator $\star_{\alpha}:Hom_{\K}(C,A) \rightarrow Hom_{\K}(C,A)$ is defined by 
$$\star_{\alpha}(f)=C \xrightarrow{\Delta_C} \mathcal{C} \circ C \xrightarrow{\alpha \circ f} \mathcal{P} \circ A \xrightarrow{\mu_A} A.$$ 
\end{definition}

\begin{definition}
A twisting morphism relative to an operadic twisting  morphism  $\alpha:\C \rightarrow \P$ is defined as a degree $0$ linear map $\tau:C \rightarrow A$, such that $\tau$ satisfies the Maurer-Cartan equation, which is given by 
$$\partial(\tau)+\star_{\alpha}(\tau)=0.$$
The set of Maurer-Cartan elements relative to $\alpha$ is denoted by $MC_\alpha(C,A)$.
\end{definition}

\subsection{Bar and cobar constructions for algebras over an operad}

The main reason we care about twisting morphisms in this paper, is because they are represented by the bar and cobar constructions. The bar construction will provide us with functorial fibrant replacements in the model category of $\mathcal{C}$-coalgebras and the cobar  construction will provide us with functorial cofibrant replacements in the category of $\mathcal{P}$-algebras. The bar and cobar constructions also give us a way to relate the  rational homology and rational homotopy groups to each other (see for example \cite{Ber1}). In this section we define the bar and cobar construction for algebras and coalgebras relative to an operadic twisting morphism $\alpha:\mathcal{C} \rightarrow \mathcal{P}$, between a cooperad $\mathcal{C}$ and an operad $\mathcal{P}$. The bar and cobar construction form an adjoint pair of functors between the category of $\mathcal{C}$-coalgebras and $\mathcal{P}$-algebras:
$$\Omega_{\alpha}:\{\mbox{conilpotent $\mathcal{C}$-coalgebras}\} \rightleftarrows \{\mbox{$\mathcal{P}$-algebras} \}:B_{\alpha}.$$
The constructions are given in the following definitions. 
 
\begin{definition}
Let $A$ be a $\mathcal{P}$-algebra and let $\alpha:\C \rightarrow \P$ be an operadic twisting morphism. The bar construction $B_{\alpha}A$ on $A$ is defined as $B_{\alpha}A=(\mathcal{C}(A),d_B)$. Where $\mathcal{C}(A)$ denotes the cofree $\mathcal{C}$-coalgebra on the underlying vector space of $A$ and $d_B=d_1+d_2$. The differential $d_1$ is the unique extension of $d_A$ and $d_\C$ to $\mathcal{C}(A)$ and $d_2$ is the unique coderivation extending the following map
$$\mathcal{C} \circ A \xrightarrow{\alpha \circ Id_A} \mathcal{P} \circ A \xrightarrow{\mu_A} A.$$ 
\end{definition}

Similarly we also have the cobar construction which is defined as follows.

\begin{definition}
Let $C$ be a $\mathcal{C}$-coalgebra, then we define the cobar construction $\Omega_{\alpha}C=(\mathcal{P}(C),d_{\Omega})$ as the free $\mathcal{P}$-algebra on the underlying vector space of $C$. The differential is given by $d_{\Omega}=d_1+d_2$, where $d_1$ is the unique extension of $d_C$ and $d_{\P}$ to $\mathcal{P}(A)$ and $d_2$ is the unique derivation that extends the following map 
$$C \xrightarrow{\Delta_C} \mathcal{C} \circ C \xrightarrow{\alpha \circ Id_C} \mathcal{P} \circ C.$$ 
\end{definition}

The following theorem states that the set of Maurer-Cartan elements are represented by the bar and cobar construction.

\begin{theorem}
Let $\alpha:\mathcal{C} \rightarrow \mathcal{P}$ be an operadic twisting morphism, $C$ a $\mathcal{C}$-coalgebra and $A$ a $\mathcal{P}$-algebra. Then there are natural bijections 
$$Hom_{\mathcal{P}-alg}(\Omega_{\alpha}C,A) \cong MC_{\alpha}(C,A) \cong Hom_{\mathcal{C}-coalg}(C,B_{\alpha}A).$$
\end{theorem}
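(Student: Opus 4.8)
**Proof plan for the bijections $\mathrm{Hom}_{\mathcal{P}-alg}(\Omega_\alpha C, A) \cong MC_\alpha(C,A) \cong \mathrm{Hom}_{\mathcal{C}-coalg}(C, B_\alpha A)$.**

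The plan is to exploit the free–forgetful and cofree–forgetful adjunctions underlying the cobar and bar constructions, together with the classical characterization that a map respecting a differential built from a twisting datum is precisely a twisting morphism. First I would treat the left-hand bijection. Since $\Omega_\alpha C = (\mathcal{P}(C), d_\Omega)$ has underlying $\mathcal{P}$-algebra the \emph{free} $\mathcal{P}$-algebra on the graded vector space $C$, the free–forgetful adjunction gives a natural bijection between $\mathcal{P}$-algebra morphisms $\mathcal{P}(C) \to A$ (ignoring differentials) and linear maps $\varphi: C \to A$. The content to extract is: such a $\varphi$ extends to a \emph{chain} map $\Omega_\alpha C \to A$ — i.e.\ one commuting with $d_\Omega = d_1 + d_2$ on the source and $d_A$ on the target — if and only if $\varphi$ satisfies the Maurer–Cartan equation $\partial(\varphi) + \star_\alpha(\varphi) = 0$. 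I would verify this by writing out the compatibility condition $\widetilde{\varphi} \circ d_\Omega = d_A \circ \widetilde{\varphi}$ where $\widetilde{\varphi}$ is the algebra extension, evaluating both sides on generators $C \subset \mathcal{P}(C)$: the $d_1$-term contributes $-(-1)^{|\varphi|}\varphi \circ d_C$ and the $d_A$-term contributes $d_A \circ \varphi$ (together giving $\partial(\varphi)$), while the $d_2$-term, which is the derivation extending $C \xrightarrow{\Delta_C} \mathcal{C}\circ C \xrightarrow{\alpha \circ \mathrm{Id}} \mathcal{P}\circ C$, composed with $\widetilde{\varphi}$ and the structure map $\mu_A$, yields exactly $\star_\alpha(\varphi)$. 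Because $d_\Omega$ is a derivation and $\widetilde{\varphi}$ an algebra map, checking the identity on generators suffices, so this establishes the bijection.

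The right-hand bijection is formally dual. Here $B_\alpha A = (\mathcal{C}(A), d_B)$ has underlying $\mathcal{C}$-coalgebra the \emph{cofree} conilpotent $\mathcal{C}$-coalgebra on $A$; the cofree–forgetful adjunction identifies $\mathcal{C}$-coalgebra morphisms $C \to \mathcal{C}(A)$ (again forgetting differentials) with linear maps $\psi: C \to A$ obtained by postcomposing with the canonical projection $\mathcal{C}(A) \twoheadrightarrow A$. Conilpotence of $C$ is what guarantees this universal property holds and that the coalgebra lift is unique. I would then dualize the previous computation: a linear map $\psi$ lifts to a \emph{chain} map $C \to B_\alpha A$ iff projecting the identity $\widehat{\psi}\circ d_C = d_B \circ \widehat{\psi}$ to the cogenerators $A$ recovers the Maurer–Cartan equation, where now the $d_2$-term of $d_B$ is the coderivation extending $\mathcal{C}\circ A \xrightarrow{\alpha \circ \mathrm{Id}} \mathcal{P}\circ A \xrightarrow{\mu_A} A$, and composing $\Delta_C$ with $\alpha \circ \widehat{\psi}$ and $\mu_A$ reproduces $\star_\alpha(\psi)$. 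Since coderivations into a cofree coalgebra are determined by their projection to cogenerators, checking on cogenerators is again enough.

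Finally I would note that both bijections are manifestly natural: on the algebra side naturality in $A$ is immediate from naturality of the free–forgetful adjunction, and naturality in $C$ follows because $\Omega_\alpha$ is a functor; symmetrically on the coalgebra side. Composing, one gets the natural isomorphism $\mathrm{Hom}_{\mathcal{P}-alg}(\Omega_\alpha C, A) \cong \mathrm{Hom}_{\mathcal{C}-coalg}(C, B_\alpha A)$, which is precisely the adjunction $\Omega_\alpha \dashv B_\alpha$ already asserted in the text, now with the common description via $MC_\alpha(C,A)$. The main obstacle, such as it is, is bookkeeping: keeping the Koszul signs straight in the extension of $d_C$, $d_A$ to the (co)free object and matching them against the sign $(-1)^{|f|}$ in the definition of $\partial$, and making precise the claim that (co)derivations on (co)free objects are freely determined on (co)generators — the latter is standard (cf.\ the relevant sections of \cite{LV}) but must be invoked carefully in the conilpotent setting. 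No genuinely hard step is involved; the theorem is a packaging of the universal properties of the bar and cobar constructions.
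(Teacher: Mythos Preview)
Your proposal is correct and is exactly the standard argument: use the (co)free universal property to reduce to linear maps, then check that compatibility with the twisted differential on (co)generators is precisely the Maurer--Cartan equation $\partial(\varphi)+\star_\alpha(\varphi)=0$. The paper itself does not give a proof but simply refers to Proposition~11.3.1 in \cite{LV}, whose argument is the one you have outlined; so there is nothing to compare beyond noting that you have unpacked what the paper leaves as a citation.
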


For the proof see Proposition 11.3.1 in \cite{LV}.

\subsection{Universal twisting morphisms}

The bar and cobar construction have the universal property that every twisting morphism factors uniquely through two universal twisting morphisms associated to the bar and cobar construction. For more details see Section 6.5.11 in \cite{LV}.


\begin{proposition}\label{prop1234}
Let $\tau:C \rightarrow A$ be a twisting morphism relative to an operadic twisting morphism $\alpha:\mathcal{C} \rightarrow \mathcal{P}$. Then there exist universal twisting morphisms $\pi:B_{\alpha} A \rightarrow A$ given by the projection of $B_{\alpha}A$ onto $A$ and $\iota:C \rightarrow \Omega_{\alpha} C$ given by the inclusion of $C$ into $\Omega_{\alpha} C$.  Such that $\tau$ factors uniquely through $\pi$ in the sense that there exists a unique map of $\mathcal{C}$-coalgebras $f_{\tau}:C \rightarrow B_{\alpha}A$ such that $\tau=\pi \circ f_{\tau}$. Similarly, there exists a unique map of $\mathcal{P}$-algebras $g_{\tau}:\Omega_{\alpha} C \rightarrow A$, such that $\tau = g_{\tau} \circ \iota$. This is summarized in the following diagram

$$
\xymatrix{
{} & B_{\alpha} A  \ar[rd]^{\pi}& \\
C \ar[rr]^{\tau} \ar[dr]^{\iota} \ar[ru]^{f_{\tau}} & & A \\
& \Omega_{\alpha} C \ar[ru]^{g_{\tau}}  &
}
$$



\end{proposition}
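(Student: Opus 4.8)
The strategy is to combine the adjunction $\Omega_\alpha \dashv B_\alpha$ from the previous theorem with the three-way bijection $\mathrm{Hom}_{\mathcal{P}\text{-}alg}(\Omega_\alpha C, A) \cong MC_\alpha(C,A) \cong \mathrm{Hom}_{\mathcal{C}\text{-}coalg}(C, B_\alpha A)$, and then to identify the images of the special Maurer-Cartan elements realized by the canonical projection and inclusion. First I would verify that $\pi: B_\alpha A = \mathcal{C}(A) \to A$, the projection onto cogenerators, is itself a twisting morphism relative to $\alpha$: unwinding the definition of $\star_\alpha$ on $B_\alpha A$, the composite $\mathcal{C}(A) \xrightarrow{\Delta} \mathcal{C}\circ \mathcal{C}(A) \xrightarrow{\alpha\circ\pi} \mathcal{P}\circ A \xrightarrow{\mu_A} A$ recovers exactly the component $d_2$ of the bar differential $d_B = d_1+d_2$, while $\partial(\pi)$ (using that $\pi$ commutes with $d_1$ and the internal differential of $A$) supplies the remaining terms, so $\partial(\pi)+\star_\alpha(\pi)=0$ because $d_B^2=0$. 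Dually, $\iota: C \to \mathcal{P}(C) = \Omega_\alpha C$, the inclusion of generators, satisfies the Maurer-Cartan equation for the same reason read off from $d_\Omega^2 = 0$. Thus $\pi \in MC_\alpha(B_\alpha A, A)$ and $\iota \in MC_\alpha(C, \Omega_\alpha C)$.

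Next I would pin down the correspondence in the three-way bijection. Under $\mathrm{Hom}_{\mathcal{C}\text{-}coalg}(C, B_\alpha A) \xrightarrow{\cong} MC_\alpha(C,A)$, a coalgebra map $F: C \to B_\alpha A$ is sent to $\pi \circ F$ (post-composition with the universal twisting morphism $\pi$); this is essentially the content of the proof of the preceding theorem, since the bijection is built by composing with $\pi$. Hence given a twisting morphism $\phi: C \to A$, its preimage is the unique $\mathcal{C}$-coalgebra morphism $f_\phi: C \to B_\alpha A$ with $\pi \circ f_\phi = \phi$; concretely $f_\phi$ is the map to the cofree coalgebra determined by the family of iterated "components" of $\phi$ under the coproducts, and uniqueness is the universal property of the cofree $\mathcal{C}$-coalgebra together with the fact that a coalgebra map into a cofree coalgebra is determined by its projection to cogenerators. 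Symmetrically, $\mathrm{Hom}_{\mathcal{P}\text{-}alg}(\Omega_\alpha C, A) \xrightarrow{\cong} MC_\alpha(C,A)$ sends an algebra map $G$ to $G \circ \iota$, so $\phi$ factors as $g_\phi \circ \iota$ for a unique $\mathcal{P}$-algebra map $g_\phi$, by the universal property of the free $\mathcal{P}$-algebra.

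Finally I would assemble the commuting diagram: the two factorizations $\phi = \pi \circ f_\phi$ and $\phi = g_\phi \circ \iota$ are exactly the two triangles, and the outer triangle $\phi = \phi$ closes it, so the diagram in the statement commutes. I do not expect a genuine obstacle here — the proposition is a repackaging of the bar/cobar adjunction — but the one point requiring care is the verification that $\pi$ and $\iota$ really are twisting morphisms, i.e. matching the Maurer-Cartan operator $\star_\alpha$ against the quadratic part of the bar/cobar differential; this is a direct but slightly fiddly diagram chase with the infinitesimal decomposition of the (co)operad. One should also be mildly careful that "unique" in each case means unique as a morphism in the respective category of $\mathcal{P}$-algebras or $\mathcal{C}$-coalgebras, which is where conilpotence of $\mathcal{C}$ and $C$ is used to guarantee the relevant universal properties of $\mathcal{C}(A)$ hold.
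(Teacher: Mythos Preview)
The paper does not give its own proof of this proposition; it only states the result and refers the reader to Section 6.5.11 of Loday--Vallette for details. Your sketch is correct and is essentially the standard argument found there: one checks directly that $\pi$ and $\iota$ satisfy the Maurer-Cartan equation (by matching $\star_\alpha$ against the quadratic part $d_2$ of the bar/cobar differential), and then the factorizations and their uniqueness are read off from the bijections $Hom_{\mathcal{P}\text{-}alg}(\Omega_\alpha C, A)\cong MC_\alpha(C,A)\cong Hom_{\mathcal{C}\text{-}coalg}(C, B_\alpha A)$ of the preceding theorem, using that a map into a cofree coalgebra (resp.\ out of a free algebra) is determined by its projection to cogenerators (resp.\ restriction to generators).
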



The main reason to consider the bar and cobar construction is because they give functorial fibrant and cofibrant replacements, this will be explained in Section \ref{secmodelcategories}, the following theorem states that the bar-cobar resolution and the cobar-bar resolution are in fact resolutions of  algebras and coalgebras. The following theorem is a combination of Theorem 11.3.3 and Theorem 11.3.4 of \cite{LV}.

\begin{theorem}
If the operadic twisting morphism $\alpha:\C \rightarrow \P$ is Koszul, then the counit $\epsilon_{\alpha}:\Omega_{\alpha} B_{\alpha} A \rightarrow A $  and unit $\nu_{\alpha}: C \rightarrow B_{\alpha} \Omega_{\alpha} C $ of the bar-cobar adjunction are quasi-isomorphisms. 
\end{theorem}

\section{The $L_{\infty}$-operad and $L_{\infty}$-algebras}

One of the most important operads in this paper will be the $L_{\infty}$-operad. The $L_{\infty}$-operad is the operad describing Lie algebras up to homotopy and is heavily used in for example deformation theory and rational homotopy theory. In this section we recall most of the basics of the theory of $L_{\infty}$-algebras, for more details see for example \cite{Getz1}, \cite{BM11} and \cite{Berg1}.


\begin{remark}
In this paper we will use shifted $L_\infty$-algebras, which we denote by $SL_\infty$. It turns out that that shifted $L_\infty$-algebras have several technical advantages compared to unshifted $L_\infty$-algebras. The reader who prefers to use the unshifted version just needs to (de)suspend everything. The main difference between $L_\infty$-algebras and $SL_{\infty}$-algebras is that the generating operations of the $SL_{\infty}$-operad all have degree $-1$. In the unshifted case the generating operation of arity $n$ has degree $2-n$.

\end{remark}

\begin{definition}
The cocommutative cooperad $\mathcal{COCOM}$ is the cooperad given by the symmetric sequence $\mathcal{COCOM}(k)=\K \mu_k$ which is one-dimensional in each arity and has the trivial symmetric group action, all the cooperations $\mu_k$ have degree $0$. The decomposition map is given by 
$$\Delta_{\mathcal{COCOM}}(\mu_n)=\sum_{p=1}^{n} \sum_{i=1}^{p} \mu_p \circ_i \mu_{n-p+1}.$$
\end{definition}

\begin{definition}
The $SL_{\infty}$-operad is the operad given by $\Omega_{op}\mathcal{COCOM}$, the operadic cobar construction on the cooperad $\mathcal{COCOM}$, i.e. it is the free operad generated by $s^{-1}\mathcal{COCOM}$. The differential comes from the cobar construction and is easiest described as the higher Jacobi identities on an $SL_\infty$-algebra. 

 An $SL_{\infty}$-algebra $L$ is an algebra over the $SL_{\infty}$-operad, more specifically it is a differential graded vector space $L$ with a sequence of  operations $l_n:L^{\otimes n} \rightarrow L$  for each $n \geq 2$. All the operations $l_n$ have dgree $-1$, are graded symmetric and satisfy a shifted version of the higher Jacobi identities. These higher Jacobi identities are given by
\[
	\sum_{\substack{n_1+n_2 = n+1\\\sigma\in Sh(n_2,n_1-1)}}(l_{n_1}\circ_1l_{n_2})^\sigma = 0,
\]
where $Sh(n_2,n_1-1)$ denotes the set of shuffles between the sets $\{1,...,n_2\}$ and $\{1,...,n_1-1\}$. The differential of $L$ is here denoted by $l_1$.
\end{definition}



In this paper we will often restrict ourselves to two special classes of $L_\infty$-algebras called nilpotent and degree-wise nilpotent $L_\infty$-algebras. These classes of $SL_\infty$-algebras have better properties and behave better with respect to certain infinite sums that we need to take. For more details see Section 2 of \cite{Berg1}.

\begin{definition}\label{defnilpotence}
Let $L$ be an $SL_\infty$-algebra, the lower central series of $L$ is defined as the descending filtration $\Gamma_1 \supseteq ... \supseteq \Gamma_{n-1} \supseteq \Gamma_n L\supseteq ...$, whose $n$th piece $\Gamma_n L$, is spanned by all possible bracket expressions using at least $n$ elements of $L$.

An $SL_\infty$-algebra $L$ is called nilpotent if there exists an $N$ such that $\Gamma_n L =0$ for all $n >N$. 

An $SL_\infty$-algebra $L$ is called degree-wise nilpotent if for each $d$ there exists an $N_d$ such that $(\Gamma_n L)_d=0$ for all $n > N_d$.
\end{definition}

Note that nilpotence clearly implies degree-wise nilpotence. In general we can always equip the tensor product of an $SL_\infty$-algebra and a commutative algebra with the structure of an $SL_\infty$-algebra. We call this the extension of scalars of $L$ by $A$. This is defined as follows.

\begin{definition}
 Let $(A,\mu)$ be a commutative algebra and $(L,l_1,l_2,...)$ an $L_{\infty}$-algebra, then we define the extension of scalars of $L$ by $A$, as the $L_{\infty}$-algebra whose underlying chain complex is $A \otimes L$. The differential is given by $d_{A \otimes L}(a \otimes x)=d_A(a)\otimes x+ (-1)^{\vert a \vert} a\otimes d_L(x)$, for $a \otimes x \in A \otimes L$. The operations $l_n$, for $n \geq 2$ are given by 
 \[
 l_n(a_1\otimes x_1,...,a_n \otimes l_n)=(-1)^{\sum_{i<j}\vert a_i\vert \vert x_j \vert} a_1 \cdot a_2 \cdot ... \cdot a_{n-1} \cdot a_n \otimes l_n(x_1,...,x_n),
 \]
where $a_i \otimes x_i \in A \otimes L$ and $a_1 \cdot a_2 \cdot ... \cdot a_{n-1} \cdot a_n$ denotes the  product of $A$.
\end{definition}

To each degree-wise nilpotent $SL_{\infty}$-algebra we can associate a simplicial set as follows, see \cite{Getz1} for more details.

\begin{definition}
 Let $L$ be a degree-wise nilpotent $SL_{\infty}$-algebra, a Maurer-Cartan element $\tau\in L$  is a degree $0$ element satisfying the Maurer-Cartan equation which is given by
 $$\sum_{n \geq 1} \frac{1}{n!} l_n(\tau,...,\tau)=0.$$
 
 The set of Maurer-Cartan elements in an $SL_{\infty}$-algebra is denoted by $MC(L)$.
\end{definition}  

We need that the $SL_\infty$-algebra $L$ is degree-wise nilpotent to make sure that the Maurer-Cartan equation converges. One of the special things about Maurer-Cartan elements is that we can use them to twist  $L$, that is we can define a new $SL_\infty$-algebra  structure $L^{\tau}$, on $L$ using a Maurer-Cartan element $\tau$. See Proposition 4.4 of \cite{Getz1} for more details.
  
\begin{proposition}
Let $L$ be a degree-wise nilpotent $SL_\infty$-algebra and let $\tau\in L$ be a Maurer-Cartan element. Then there exists a new $SL_\infty$-algebra structure on $L$, which we denote by $L^{\tau}$. For each $n \geq 1$, the twisted $SL_\infty$-operation $l_n$ on $L^\tau$ is given by 
\[
l_n^{\tau}(x_1,...,x_n)=\sum_{k \geq 0}\frac{1}{k!} l_{n+k}(\tau,...,\tau,x_1,...,x_n),
\]
where $x_i \in L$ and the element $\tau$ appears $k$ times.  
\end{proposition}

For the next definition recall that $\Omega_n$, the polynomial de Rham forms on the $n$-simplex, are defined as the commutative algebra given by 
\[
\Omega_n= \Lambda[t_0,...,t_n,dt_0,...,dt_n]/ (T_n,dT_n),
\] 
where $\Lambda$ is the free graded commutative algebra, the variables $t_i$ have degree $0$ and the variables $dt_i$ have degree $1$, the differential is given by $d(t_i)=dt_i$ Note that we have graded this algebra cohomologically. The relations $T_n$ and $dT_n$ are given by $T_n=t_0+...+t_n-1$ and $dT_n=dt_0+...+dt_n$.

\begin{definition}
  To each degree-wise nilpotent $SL_{\infty}$-algebra $L$ we associate a simplicial set $MC_{\bullet}(L)$, whose set of $n$-simplices is given by $MC(L \otimes \Omega_n)$, where $\Omega_n$ is the commutative algebra of polynomial de Rham forms on the $n$-simplex. The face and degeneracy maps are the ones induced by the face and degeneracy maps of $\Omega_{\bullet}$.
\end{definition}

The following theorem comes from \cite{Getz1}.

\begin{theorem}
 The simplicial set $MC_{\bullet}(L)$ is a Kan complex.
\end{theorem}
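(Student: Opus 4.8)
The plan is to verify the Kan horn-filling condition directly, following the argument of Getzler \cite{Getz1} (and, for dg Lie algebras, Hinich). Fix $n \geq 1$ and $0 \leq k \leq n$ and let $\Omega_{\Lambda^n_k}$ denote the commutative dg algebra of polynomial de Rham forms on the $k$-th horn of the $n$-simplex, i.e. the quotient of $\Omega_n$ obtained by restricting forms to the union of all faces except the $k$-th one; write $r\colon \Omega_n \to \Omega_{\Lambda^n_k}$ for the restriction. A map $\Lambda^n_k \to MC_\bullet(L)$ is exactly a compatible family of Maurer-Cartan elements on those faces, hence the same datum as a single element $\alpha \in MC(L \otimes \Omega_{\Lambda^n_k})$, and filling the horn means producing $\tilde\alpha \in MC(L \otimes \Omega_n)$ mapping to $\alpha$ under $\rho := \mathrm{id}_L \otimes r \colon L \otimes \Omega_n \to L \otimes \Omega_{\Lambda^n_k}$. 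So the theorem reduces to: every Maurer-Cartan element downstairs lifts along $\rho$.

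The two inputs that make this work are properties of $r$. First, $r$ is surjective: polynomial forms extend from a horn to the whole simplex (this is the standard extendability of $\Omega_\bullet$). Second, $K := \ker r$ is acyclic: since $\Delta^n$ and $\Lambda^n_k$ are both contractible, $\Omega_n$ and $\Omega_{\Lambda^n_k}$ have cohomology $\K$ concentrated in degree $0$ and $r$ is an isomorphism there, so the long exact sequence of $0 \to K \to \Omega_n \to \Omega_{\Lambda^n_k} \to 0$ forces $H(K) = 0$. Working over a field, an acyclic complex is contractible, so I fix a contracting homotopy $h\colon K \to K$ of degree $+1$ with $dh + hd = \mathrm{id}_K$; then $H := \mathrm{id}_L \otimes h$ is a contracting homotopy of $L \otimes K = \ker \rho$.

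The core of the proof is an obstruction-theoretic recursion. For $\beta \in (L \otimes \Omega_n)_0$ write $F(\beta) = d\beta + \sum_{j \geq 2} \frac{1}{j!} l_j(\beta,\dots,\beta)$ for the curvature, so $\beta$ is Maurer-Cartan precisely when $F(\beta) = 0$. Pick any linear lift $\beta_0 \in (L \otimes \Omega_n)_0$ of $\alpha$; since $\alpha$ is Maurer-Cartan, $\rho(F(\beta_0)) = F(\alpha) = 0$, so $F(\beta_0) \in \ker \rho$. Define $\beta_{i+1} := \beta_i - H F(\beta_i)$; each correction lies in $\ker \rho$, hence $\rho(\beta_i) = \alpha$ for all $i$. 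Using the twisted Bianchi identity — $dF(\beta)$ equals a sum of bracket expressions each containing exactly one factor $F(\beta)$ and at least one factor $\beta$ — together with nilpotence of $L$ (fix a filtration $L = F_1 L \supseteq F_2 L \supseteq \cdots$ with $F_N L = 0$ and $l_j(F_{i_1}L, \dots, F_{i_j}L) \subseteq F_{i_1 + \cdots + i_j}L$), one shows inductively that $F(\beta_i) \in F_{i+1}L \otimes K$. Indeed $dHF(\beta_i) = F(\beta_i) - H dF(\beta_i)$, so $F(\beta_{i+1})$ collects $H dF(\beta_i)$ and the terms in which at least one argument of some $l_j$ equals $H F(\beta_i)$, and by Bianchi and the induction hypothesis every such term lands in $F_{i+2}L \otimes K$. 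Hence $F(\beta_{N-1}) = 0$, the sequence stabilizes, and $\tilde\alpha := \beta_{N-1}$ is a Maurer-Cartan element of $L \otimes \Omega_n$ with $\rho(\tilde\alpha) = \alpha$; this fills the horn, proving $MC_\bullet(L)$ is Kan. (Extension of scalars along the cdga's $\Omega_n$, $\Omega_{\Lambda^n_k}$ and compatibility of the $L_\infty$-operations with $h$ are exactly the structures recalled above.)

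The main obstacle is the bookkeeping in the last step: verifying carefully that the Newton-type correction $\beta_{i+1} = \beta_i - HF(\beta_i)$ genuinely pushes the curvature one stage deeper into the nilpotency filtration, which is where the twisted Bianchi identity and the graded-symmetry and sign conventions of the $l_j$ must be handled precisely; this is the computational heart of Getzler's proof. A more streamlined but less self-contained route is to observe that $r$ is a surjective quasi-isomorphism of cdga's, deduce that $\rho$ is a surjective quasi-isomorphism of nilpotent $L_\infty$-algebras, and quote the general fact that such maps induce surjections on Maurer-Cartan sets — but the proof of that fact is precisely the recursion above, so nothing is really saved.
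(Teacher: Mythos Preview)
The paper does not prove this statement at all; it simply attributes the result to Getzler \cite{Getz1} and moves on. Your proposal is a correct outline of Getzler's argument (the reduction to lifting Maurer--Cartan elements along the surjective quasi-isomorphism $L\otimes\Omega_n\to L\otimes\Omega_{\Lambda^n_k}$, the contracting homotopy on the acyclic kernel, and the Newton-type recursion driven by the Bianchi identity and nilpotence), so you have supplied precisely the proof the paper chose to omit by citation.
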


Using this definition we can now define rational $SL_\infty$-models for a space $X$.

\begin{definition}
Let $X$ be a simply-connected space of finite $\Q$-type, a rational $SL_{\infty}$-model $L$ for the space $X$ is an $SL_{\infty}$-algebra $L$, such that $MC_{\bullet}(L)$ is rationally equivalent to the space $X$.
\end{definition}

In \cite{Getz1}, it is shown that rational $SL_{\infty}$-models for simply-connected spaces of finite $\Q$-type exist.

In this paper we are mainly interested in the set of path components of the simplicial set $MC_{\bullet}(L)$.  Because the simplicial set $MC_\bullet(L)$ is a Kan complex, the equivalence relation of being in the same path component of $MC_\bullet(L)$ defines an equivalence relation on $MC(L)$. This equivalence relation will be called homotopy or gauge equivalence. In particular two Maurer-Cartan elements $x, y \in L$ are gauge equivalent if there exists a Maurer-Cartan element $z \in L \otimes \Omega_1$ such that $z$ is of the form $z=\sum z_i \otimes P_i(t) + z'_i \otimes Q_i(t)dt$, where the sum runs over a basis $\{z_i\}$ of $L$ and $P$ and $Q$ are polynomials in $t$. The elements $x$ and $y$ are then called gauge equivalent if $\sum z_i \otimes P_i(0)=x$ and $\sum z_i \otimes P_i(1)=y$.


\begin{definition}
 The moduli space of Maurer-Cartan elements in an $SL_{\infty}$-algebra $L$, is defined as the set of Maurer-Cartan elements modulo the gauge equivalence relation. The moduli space of Maurer-Cartan elements is denoted by $\mathcal{MC}(L)$.
\end{definition}

\begin{remark}
In deformation theory it is common to have an alternative definition of gauge equivalence given by the action of the "Lie group" associated to the $SL_{\infty}$-algebra on the set of Maurer-Cartan elements, in the papers \cite{BM11} and \cite{DP1} it is proven that gauge and homotopy equivalence are the same equivalence relation.  Since the word homotopy equivalence is already quite overused in this paper, we shall refer to this equivalence relation as gauge equivalence.
\end{remark}

\subsection{$C_{\infty}$-models for spaces}

In the previous section we have described the $SL_{\infty}$ approach to rational homotopy theory, but there is also a second approach with $C_{\infty}$-coalgebras. In this section we will recall the basic facts about this approach. We start by  defining the $C_{\infty}$-cooperad, this is a fibrant replacement of the cocommutative cooperad.

\begin{definition}
The $C_{\infty}$-cooperad is the cooperad defined as the bar construction on the operad $\mathcal{LIE}$, i.e. $C_{\infty}=B_{op} \mathcal{LIE}$.
\end{definition}

The next step is to use the following theorem by Quillen, which is the main theorem of \cite{Quil1}. Recall that we call a chain complex simply-connected if it is concentrated in degrees greater or equal than $2$.

\begin{theorem}\label{thrmQuillenstheorem}
 There exists a functor $\mathcal{C}\lambda:Top_{*,1} \rightarrow CDGC_{\geq 2}$ which induces and equivalence of homotopy categories between the category of 1-reduced rational spaces of finite $\Q$-type to the category of simply-connected cocommutative differential graded coalgebras.
\end{theorem}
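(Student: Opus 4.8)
The plan is to realize $\mathcal{C}\lambda$ as a composite of two functors, each of which induces an equivalence of homotopy categories: the Quillen functor $\lambda$ sending a simply connected space to a connected dg Lie algebra, followed by the Chevalley--Eilenberg chain coalgebra functor $\mathcal{C}$ sending a dg Lie algebra to a conilpotent cocommutative dg coalgebra. That is, one establishes
$$\mathrm{Ho}(Top_{*,1}^{\Q}) \xrightarrow{\ \simeq\ } \mathrm{Ho}(DGL_{\geq 1}) \xrightarrow{\ \simeq\ } \mathrm{Ho}(CDGC_{\geq 2}),$$
where $DGL_{\geq 1}$ denotes connected dg Lie algebras and $CDGC_{\geq 2}$ the $2$-connected cocommutative dg coalgebras, identifies the composite with $\mathcal{C}\lambda$ up to natural isomorphism, and checks that under the finite $\Q$-type hypothesis everything restricts to the corresponding finite-type subcategories (degreewise finite-dimensionality on the coalgebra side).

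For the first equivalence I would follow Quillen's original zig-zag. Starting from the singular simplicial set of $X$, replace it by a weakly equivalent $1$-reduced simplicial set, apply Kan's loop group functor to obtain a free simplicial group $GX$, pass to the degreewise rational completion of the group algebra to get a simplicial complete Hopf algebra, extract its module of primitive elements to obtain a simplicial complete Lie algebra over $\Q$, and finally normalize via Dold--Kan to land in $DGL_{\geq 1}$. Each of these steps is (one half of) a Quillen pair, and the point is to show that after restricting to the $1$-reduced, finite-type, rational range each becomes an equivalence of homotopy categories; the essential inputs are the Milnor--Moore theorem relating primitives and the Lie algebra structure on a complete cocommutative Hopf algebra, the Dold--Kan correspondence, and the fact that rational completion of the loop group models rationalization of the space.

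For the second equivalence, observe that $\mathcal{C}(L)=\big(S^{c}(sL),\, d_{1}+d_{2}\big)$ is, up to operadic suspension, exactly the bar construction $B_{\kappa}L$ relative to the Koszul twisting morphism $\kappa$ between (a suspension of) the cocommutative cooperad and the Lie operad; hence it is indeed a conilpotent cocommutative dg coalgebra, and its left adjoint is the corresponding cobar construction $\Omega_{\kappa}$. Equipping $DGL$ with its standard model structure and $CDGC_{\geq 2}$ with Quillen's model structure (quasi-isomorphisms as weak equivalences, all objects cofibrant in the simply connected range), the pair $(\Omega_{\kappa},B_{\kappa})$ is a Quillen pair, and by the bar--cobar resolution theorem recalled above the unit $\nu_{\kappa}\colon C\to B_{\kappa}\Omega_{\kappa}C$ and counit $\epsilon_{\kappa}\colon\Omega_{\kappa}B_{\kappa}L\to L$ are weak equivalences on the relevant subcategories; therefore the derived adjunction is an equivalence. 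Composing with the first equivalence yields that $\mathcal{C}\lambda$ induces the claimed equivalence, and a homology/homotopy bookkeeping argument shows finite $\Q$-type of $X$ corresponds to degreewise finite-dimensionality of $\mathcal{C}\lambda(X)$.

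The main obstacle is the first half: the passage from $Top_{*,1}$ to $DGL_{\geq 1}$ is a genuine chain of several Quillen equivalences, and making the rationalization precise --- identifying $\mathrm{Ho}(Top_{*,1}^{\Q})$ with the localization of $\mathrm{Ho}(Top_{*,1})$ at rational homology isomorphisms, and verifying that rational completion of $GX$ implements exactly this localization --- is the delicate point, and is where the $1$-reducedness and finite $\Q$-type hypotheses are genuinely needed (to guarantee convergence of the completions and good behaviour of the spectral sequences comparing the successive functors). A secondary difficulty is the model structure on $CDGC_{\geq 2}$: existence of Quillen's model structure and the fact that the cobar functor behaves like a cofibrant replacement are subtle for coalgebras in general, and it is precisely the simply connected hypothesis that makes this work. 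Straightening the zig-zag into the single named functor $\mathcal{C}\lambda$ is then only a matter of choosing explicit adjoints and composing.
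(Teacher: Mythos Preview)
The paper does not give a proof of this statement at all: it is stated as ``the main theorem of \cite{Quil1}'' and used as a black box, with no argument beyond the citation. Your outline is, in fact, a reasonable sketch of Quillen's original proof (the zig-zag through Kan's loop group, completed group algebras, primitives, and normalization, followed by the Chevalley--Eilenberg/bar--cobar equivalence), so there is nothing to compare on the level of strategy --- you are supplying what the paper deliberately omits.

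One remark on content: your identification of $\mathcal{C}$ with the bar construction $B_\kappa$ relative to the Koszul twisting morphism $\kappa:\mathcal{COCOM}\to\mathcal{LIE}$ is exactly the point of view the paper adopts elsewhere (Section~\ref{sectwistingmorphism} and Theorem~\ref{thrmquilleneqcoalg}), so that half of your argument integrates seamlessly with the surrounding material. The genuinely hard half, as you correctly flag, is the chain of equivalences from spaces to $DGL_{\geq 1}$; filling that in rigorously would amount to reproducing a substantial portion of Quillen's paper, which is presumably why the author chose to cite rather than prove.
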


Using this theorem we define a $C_{\infty}$-model for a space as follows.

\begin{definition}
Let $X$ be a simply-connected space of finite $\Q$-type, a $C_{\infty}$-coalgebra $C$ is a  $C_{\infty}$-model for $X$ if there exists a zig-zag of quasi-isomorphisms between $\mathcal{C} \lambda (X)$ and $C$. 
\end{definition}

\subsection{The convolution operad}

In this section we recall the definition of the convolution operad. This is an operad associated to  $Hom_{\K}(\mathcal{C},\mathcal{P})$, the space of linear maps between a cooperad $\mathcal{C}$ and an operad $\mathcal{P}$. The convolution operad was first defined in \cite{BM}. For more details see also \cite{LV} on which this section is based.

\begin{definition}
Let $\mathcal{C}$ be a cooperad and let $\mathcal{P}$ be an operad, then we define a operad structure on  $Hom_{\K}(\mathcal{C},\mathcal{P})$, the space of linear maps from $\C(n)$ to $\P(n)$, as follows. 

The arity $n$ part of the convolution operad is defined as $Hom_\K(\C,\P)(n):=Hom_{\K}(\mathcal{C}(n),\mathcal{P}(n))$. The symmetric group action on  $Hom_{\K}(\mathcal{C},\mathcal{P})(n)$ is defined by 
$$f^{\sigma}(x)=\sigma (f(x^{\sigma^{-1}} )),$$
for $x \in \C$, $\sigma \in  \Sigma_n$ and where $x^{\sigma^{-1}}$ denotes the action of $\sigma^{-1}$ on $x$. Let $f \in Hom(\C,\P)(k)$ and $g_i\in Hom(\C,\P)(n_i)$ for $i=1,...,k$ with $i_1+...i+k=n$, then we define the composition $f \circ g_1,...,g_k$ by the following composite of maps
\[
\C(n) \xrightarrow{\Delta_{\C}} (\C \circ \C)(n) \rightarrow \C(k) \otimes \C(i_1) \otimes... \otimes  \C(i_k) \otimes \K[\Sigma_n]\xrightarrow{f \otimes g_1 \otimes ... \otimes g_k \otimes Id_{\Sigma_n}}
\]
\[
 \P(k) \otimes \P(i_1) \otimes... \otimes \P(i_k) \otimes \K[\Sigma_n] \xrightarrow{\gamma_{\P}} \P(n),
\]
where $\Delta_\C$ is the decomposition map of $\C$ and $\gamma_\P$ is the composition map of $\P$.
\end{definition}

It turns out that the cocommutative cooperad plays a special role, since it will act as a "unit" with respect to the convolution operad. The following lemma will be important in section \ref{seclinfty}.

\begin{lemma}\label{lemcocomunit}
Let $\P$ be an operad, there exists a canonical isomorphism of operads between $\P$ and  $Hom_\K(\mathcal{COCOM},\P)$.
\end{lemma}

\begin{proof}
We have to show that there is an isomorphism between $Hom_{\K}(\mathcal{COCOM},\mathcal{P})$ and $\mathcal{P}$. To do this we define an explicit isomorphism 
$$\phi:Hom_{\K}(\mathcal{COCOM},\mathcal{P})\rightarrow \mathcal{P},$$ 
which is given by sending a map $f:\mathcal{COCOM} \rightarrow \mathcal{P}$ to its image in $\mathcal{P}$, i.e. in arity $n$ component of this map is given by
$$\phi(f)=f(\mu_n).$$
Where $\mu_n$ is the basis element of $\mathcal{COCOM}(n)$. Since $\mathcal{COCOM}$ is one-dimensional in each arity, the map $\phi$ is an aritywise isomorphism of vector spaces. The morphism $\phi$ commutes with the symmetric group action because the symmetric group action on $\mathcal{COCOM}$ is trivial. Therefore the action on a map $f \in Hom_{\K}(\mathcal{COCOM},\mathcal{P})$ is given by $\sigma(f)$, which is the same as the action coming from $\mathcal{P}$. It is a straightforward but tedious check that the morphism $\phi$ commutes with the operadic composition maps and we leave this to the reader.

\end{proof}

\subsection{The pre-Lie algebra associated to an operad and operadic twisting morphisms}

The goal of this section is to recall how we can associate a pre-Lie algebra to an operad and how the Maurer-Cartan elements in the pre-Lie algebra associated to the convolution operad correspond to operadic twisting morphisms. See \cite{LV} for more details.

\begin{proposition}
Let $\mathcal{P}$ be an operad, then there exists a pre-Lie algebra structure on the space $\bigoplus_{n \geq 1} \mathcal{P}(n)$ with the pre-Lie operation given by 
$$\mu \star \nu=\sum_{i=1}^{n} \sum_{P} (\mu \circ_{i} \nu )^{\sigma_{P}},$$
with $\mu \in \mathcal{P}(n)$ and $\nu \in \mathcal{P}(m)$ and the second sum runs over all ordered partitions $P \in Ord(1,...,1,n-i+1,1,...,1)$, with $n-i+1$ on the $i$th spot.
\end{proposition}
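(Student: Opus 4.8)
The plan is to verify directly that the binary operation $\{\mu,\nu\}$ defined by partial composition and symmetrization satisfies the right Pre-Lie identity, namely that the associator
$$\{\{\mu,\nu\},\lambda\} - \{\mu,\{\nu,\lambda\}\}$$
is symmetric in $\nu$ and $\lambda$. First I would set up the bookkeeping: for $\mu\in\mathcal{P}(n)$, $\nu\in\mathcal{P}(m)$, $\lambda\in\mathcal{P}(k)$, the iterated bracket produces terms that are either \emph{sequential} insertions (insert $\nu$ at the $i$-th input of $\mu$, then insert $\lambda$ at one of the inputs of the resulting element that came from $\mu$) or \emph{nested} insertions (insert $\lambda$ into $\nu$ first, then insert the result into $\mu$). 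The key structural fact I would invoke is the operadic axiom relating the two ways of composing three elements: the parallel composition identity $(\mu\circ_i\nu)\circ_{j}\lambda = \mu\circ_i(\nu\circ_{j'}\lambda)$ when $\lambda$ is inserted into a slot of $\mu$ occupied by $\nu$, versus the sequential identity $(\mu\circ_i\nu)\circ_{j}\lambda = (\mu\circ_{j''}\lambda)\circ_{i'}\nu$ when $\lambda$ goes into a slot of $\mu$ not touched by $\nu$. These are exactly the two operad associativity axioms.

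The main steps, in order, are: (1) expand $\{\{\mu,\nu\},\lambda\}$ and split the sum over where $\lambda$ is inserted into "inside the $\nu$-part" versus "inside the $\mu$-part"; (2) observe that the "inside the $\nu$-part" terms are precisely $\{\mu,\{\nu,\lambda\}\}$, using the parallel operad axiom together with the fact that the partitions and symmetrizations compose correctly (this is where one must be careful that the sum over partitions $P\in Ord(1,\dots,1,n-i+1,1,\dots,1)$ and the corresponding permutations $\sigma_P$ compose to give the partitions appearing in the definition of $\{\nu,\lambda\}$ followed by the outer bracket); (3) conclude that the associator equals the sum of the "inside the $\mu$-part" terms, and check that this remaining expression is manifestly symmetric in $\nu\leftrightarrow\lambda$, since it amounts to independently inserting $\nu$ and $\lambda$ into two distinct inputs of $\mu$ and symmetrizing. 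For the grading and signs one works with the desuspended/shifted conventions so that $\{-,-\}$ has degree $0$ (or tracks the Koszul signs as in \cite{LV}).

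The hard part will be step (2): reconciling the combinatorics of the ordered partitions and the symmetrizing permutations $\sigma_P$. The naive partial composition $\mu\circ_i\nu$ is not symmetric, and the summation over $Ord(\dots)$ is exactly the correction that turns it into a well-defined operation on the symmetric sequence (equivalently, on coinvariants). One must check that after this symmetrization the two-step insertion of $\lambda$ into the $\nu$-block matches term-by-term, with the correct multiplicities and signs, the expansion of $\{\mu,\{\nu,\lambda\}\}$; this is a matching-of-indexing-sets argument rather than a deep one, but it is the place where an honest proof spends its effort. An alternative, more conceptual route I would mention is to recognize $\bigoplus_n\mathcal{P}(n)$ with this bracket as the convolution pre-Lie algebra $\mathrm{Hom}_{\mathbb{S}}(\mathcal{I}^{\vee}\!\!\text{-ish cooperad},\mathcal{P})$, or to identify it with the pre-Lie algebra controlling operad morphisms out of the free operad, so that the identity is inherited from the associativity of the operad structure map; citing Section 6.4 of \cite{LV} then makes the verification essentially formal.
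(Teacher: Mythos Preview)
The paper does not actually prove this proposition: it is stated in the subsection ``The Pre-Lie algebra associated to an operad'' as a recalled fact, with the surrounding text saying ``See \cite{LV} for more details,'' and no proof follows. So there is nothing to compare your argument against in the paper itself.

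Your plan is correct and is essentially the standard argument (as in \cite{LV}, Section~5.4): split the double bracket into nested and parallel insertions using the two operad associativity axioms, observe that the nested terms match $\{\mu,\{\nu,\lambda\}\}$, and check that the remaining parallel terms are symmetric in $\nu$ and $\lambda$. Your identification of the bookkeeping over the ordered partitions $\sigma_P$ as the only genuinely delicate point is accurate, and your alternative conceptual route via the convolution pre-Lie algebra is also valid and is in fact close to how \cite{LV} organizes things. Either route is more than the paper supplies.
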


In a pre-Lie algebra we also have a notion of Maurer-Cartan equation and Maurer-Cartan elements. The Maurer-Cartan equation is given by $d \alpha + \alpha \star \alpha=0$ and $\alpha$ is called a Maurer-Cartan element if it is of degree $-1$ and satisfies the Maurer-Cartan equation.

\begin{remark}
Note that we are not working with shifted pre-Lie algebras, the Maurer-Cartan elements in a pre-Lie algebra therefore have degree $-1$ instead of $0$ (which is the degree of Maurer-Cartan elements in the $SL_\infty$ case).
\end{remark} 

\begin{definition}
Let $\C$ be a cooperad and $\P$ be an operad. A linear map $\alpha:\C \rightarrow \P$ is called a twisting morphism if it is a Maurer-Cartan element in the pre-Lie algebra associated to  $Hom_\K(\C,\P)$. The set of twisting morphisms is denoted by $Tw(\C,\P)$.
\end{definition}

Twisting morphisms have the following property, which can be found as Theorem 6.5.7 in \cite{LV}

\begin{theorem}\label{thrm6.5.7}
Let $\C$ be a cooperad and let $\P$ be an operad then we have the following bijections
\[
Hom_{Cooperads}(\C,B_{op}\P) \cong Tw(\C,\P) \cong Hom_{Operads}(\Omega_{op}\C,\P).
\]
\end{theorem}

\section{Model structures on algebras and coalgebras}\label{secmodelcategories}

In this paper we will use several different model categories, in this section we introduce the model categories we will use. For an introduction to the theory of model categories we recommend \cite{DS1}, from which we have taken most of the definitions and conventions.

First we will define a model structure on the category of $\mathcal{P}$-algebras, which will induce a model structure on the category of $\mathcal{C}$-coalgebras. This was originally done by Hinich in \cite{Hin1}.

\begin{theorem}
Let $\mathcal{P}$ be an operad, a model structure on the category of $\mathcal{P}$-algebras is given by: 
\begin{itemize}
\item The weak equivalences are given by the quasi-isomorphisms.
\item The fibrations are given by maps that are degree-wise surjective.
\item The cofibrations are the maps with the left lifting property with respect to acyclic fibrations.
\end{itemize}
\end{theorem}

The following theorem is Theorem 3.11 in \cite{DCH1} and given a twisting morphism $\tau:\C \rightarrow \P$, it provides us with a model structure on the category of conilpotent coalgebras over a cooperad $\mathcal{C}$ relative to the twisiting morphism $\tau:\C \rightarrow \P$. This model structure was originally defined by Vallette in \cite{Val1}, in the case that the twisting morphism is Koszul.

\begin{theorem}
Let $\mathcal{C}$ be a cooperad and let $\tau:\mathcal{C} \rightarrow \mathcal{P}$ be an operadic twisting morphism to an operad $\mathcal{P}$. Then there exists a model structure on the category of conilpotent-$\mathcal{C}$ coalgebras such that,
\begin{itemize}
\item The weak equivalences are created by the cobar construction, i.e. a map $f:C \rightarrow D$ is a weak equivalence if  $\Omega_{\tau } f: \Omega_{\tau} C \rightarrow \Omega_{\tau} D$ is a quasi-isomorphism of $\mathcal{P}$-algebras.
\item The cofibrations are the morphisms $f:C \rightarrow D$, such that $f$ is a degree-wise monomorphism.
\item The fibrations are the morphisms with the right lifting property with respect to the acyclic cofibrations.
\end{itemize}
\end{theorem}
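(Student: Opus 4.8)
The plan is to obtain this model structure by transfer from the Hinich model structure on $\mathcal{P}$-algebras recalled above, along the cobar-bar adjunction $\Omega_{\tau}\dashv B_{\tau}$. Concretely, take $\mathrm{Cof}$ to be the degreewise monomorphisms, let $W$ consist of the maps $f$ with $\Omega_{\tau}f$ a quasi-isomorphism of $\mathcal{P}$-algebras, and set $\mathrm{Fib}=\mathrm{RLP}(W\cap\mathrm{Cof})$; one then checks the model category axioms for $(W,\mathrm{Cof},\mathrm{Fib})$. The formal part is immediate: the category of conilpotent $\mathcal{C}$-coalgebras is bicomplete (colimits exist since the forgetful functor to dg vector spaces preserves them, being a left adjoint to the cofree conilpotent coalgebra functor $\mathcal{C}(-)$, and limits exist by local presentability); $W$ satisfies $2$-out-of-$3$ because $\Omega_{\tau}$ is a functor and quasi-isomorphisms do; $W$ and $\mathrm{Cof}$ are closed under retracts (the latter trivially, the former because $\Omega_{\tau}$ preserves retracts) and $\mathrm{Fib}$ is closed under retracts since it is cut out by a lifting property; and acyclic cofibrations lift against fibrations by the definition of $\mathrm{Fib}$. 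So the content sits entirely in the two factorization axioms and in the lifting of an arbitrary cofibration against an acyclic fibration.

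The technical ingredients I would prepare first are: (i) the bar-cobar resolution — the unit $\nu_{\tau}\colon C\to B_{\tau}\Omega_{\tau}C$ lies in $W$ by the resolution theorem recalled above, and it is a degreewise monomorphism because its corestriction to the cogenerators of $\mathcal{C}(\Omega_{\tau}C)$ is the canonical inclusion $C\hookrightarrow\Omega_{\tau}C$, so $\nu_{\tau}$ is a functorial acyclic cofibration; dually the counit $\epsilon_{\tau}\colon\Omega_{\tau}B_{\tau}A\to A$ is a quasi-isomorphism; (ii) $B_{\tau}$ sends degreewise surjections of $\mathcal{P}$-algebras to degreewise surjections of coalgebras, because the Schur functor $\mathcal{C}(-)$ preserves surjections of vector spaces; and (iii) every conilpotent $\mathcal{C}$-coalgebra is exhausted by its coradical filtration, which is what makes transfinite small-object-type arguments feasible on the coalgebra side even though the cofibrations are \emph{all} monomorphisms — a class for which the naive small object argument is not available.

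With these in hand, both factorizations would be produced by a transfinite, coalgebraic small object argument along the coradical filtration: to factor $f\colon C\to D$ one builds the intermediate coalgebra one coradical layer at a time, attaching cells so as to put the right-hand map in $\mathrm{Fib}$ — and, for the (cofibration, acyclic fibration) factorization, also in $W$ — while the units $\nu_{\tau}$ and the resolution theorem ensure that the left-hand map is a weak equivalence in the (acyclic cofibration, fibration) case. The lifting of an arbitrary cofibration against an acyclic fibration then follows by the standard retract argument. The main obstacle is precisely this transfinite bookkeeping: one must check that the transfinite composites stay monomorphisms, that the attached cells are acyclic when they should be, and that the maps produced in this way coincide with the (acyclic) fibrations as cut out by the lifting property, so that the two descriptions of fibrations agree. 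Everything else is formal manipulation of the adjunction $\Omega_{\tau}\dashv B_{\tau}$ together with the bar-cobar resolution.
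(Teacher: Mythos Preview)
The paper does not actually prove this theorem: it is quoted verbatim as Theorem~3.11 from \cite{DCH1}, with no argument given. So there is nothing in the paper to compare your proposal against.

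That said, your sketch is a plausible outline of how the cited proof goes: one does obtain the model structure by transferring Hinich's model structure along $\Omega_{\tau}\dashv B_{\tau}$, and the substantive work is indeed in the factorization axioms. A few cautions, though. First, limits in conilpotent $\mathcal{C}$-coalgebras are not simply ``by local presentability'' and are not created by the forgetful functor; one has to work a bit to produce them. Second, your description of the factorizations as a ``coalgebraic small object argument along the coradical filtration'' is where the real content lies, and what you have written is closer to a slogan than to a proof: in practice the factorizations are constructed via the adjunction (e.g.\ by factoring through $B_{\tau}$ of a factorization on the algebra side, or by an explicit filtration argument as in Vallette's or Drummond-Cole--Hirsh's treatment), and one must identify the resulting fibrations concretely before the retract argument can be run. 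Your proposal correctly flags this as the obstacle but does not resolve it.
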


The following theorem is Proposition 3.15 from \cite{DCH1}.

\begin{theorem}\label{thrmquilleneqcoalg}
Let $\tau:\mathcal{C} \rightarrow \mathcal{P}$ be an operadic twisting morphism, assume that $\mathcal{C}$ and $\mathcal{P}$ are weight graded and that $\tau$ respects the weight grading. In this case there is a Quillen equivalence between the categories of $\mathcal{P}$-algebras and $\mathcal{C}$-coalgebras if and only if the twisting morphism $\tau$ is Koszul.
\end{theorem}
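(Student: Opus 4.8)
The plan is to study the bar--cobar adjunction
$\Omega_\tau : \{\text{conilpotent }\mathcal{C}\text{-coalgebras}\} \rightleftarrows \{\mathcal{P}\text{-algebras}\} : B_\tau$,
which by the construction of the two model structures recalled above (see \cite{DCH1}) is a Quillen adjunction; moreover $\Omega_\tau$ \emph{creates} weak equivalences, since a morphism $f$ of $\mathcal{C}$-coalgebras is by definition a weak equivalence precisely when $\Omega_\tau f$ is a quasi-isomorphism. The first point to record is that every $\mathcal{C}$-coalgebra is cofibrant --- the cofibrations are the degreewise monomorphisms and the initial object is $0$ --- and dually every $\mathcal{P}$-algebra is fibrant, because the fibrations are the degreewise surjections and the terminal object is $0$. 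Hence the derived unit and derived counit of the adjunction are just the ordinary unit $\nu_\tau\colon C\to B_\tau\Omega_\tau C$ and counit $\epsilon_\tau\colon \Omega_\tau B_\tau A\to A$.

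Next I would reduce the biconditional to a statement about the counit alone. From the triangle identity $\epsilon_{\Omega_\tau C}\circ\Omega_\tau(\nu_\tau)=\mathrm{id}_{\Omega_\tau C}$ and the fact that $\Omega_\tau$ creates weak equivalences, $\nu_\tau$ is a weak equivalence of coalgebras whenever $\epsilon_{\Omega_\tau C}$ is a quasi-isomorphism; so the standard characterisation of Quillen equivalences gives: $(\Omega_\tau,B_\tau)$ is a Quillen equivalence if and only if $\epsilon_\tau\colon \Omega_\tau B_\tau A\to A$ is a quasi-isomorphism for every $\mathcal{P}$-algebra $A$. It therefore remains to prove that this holds exactly when $\tau$ is Koszul, i.e.\ when the Koszul composite $\mathcal{P}\circ_\tau\mathcal{C}$ is acyclic (equivalently, when $\Omega_{op}\mathcal{C}\to\mathcal{P}$ is a quasi-isomorphism).

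For ``$\tau$ Koszul $\Rightarrow$ $\epsilon_\tau$ a quasi-isomorphism'' I would run the usual bar--cobar comparison argument (the generalisation of the corresponding statement in \cite{LV} from the canonical twisting morphism to an arbitrary Koszul one). The underlying object of $\Omega_\tau B_\tau A$ is $\mathcal{P}\bigl(\mathcal{C}(A)\bigr)$ with a perturbed differential; I would filter it by the weight grading carried over from $\mathcal{C}$ and $\mathcal{P}$ along $\tau$ --- this is exactly where the hypotheses that $\mathcal{C}$ and $\mathcal{P}$ are weight graded and that $\tau$ respects the grading are used --- so that the associated graded differential is the part built purely out of $\tau$. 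The resulting spectral sequence is bounded below and exhaustive, hence converges, and its first page is governed by the acyclicity of $\mathcal{P}\circ_\tau\mathcal{C}$, forcing $\epsilon_\tau$ to be a quasi-isomorphism. For the converse, assuming $\epsilon_\tau$ is a quasi-isomorphism for all $A$, I would specialise to the free $\mathcal{P}$-algebra $A=\mathcal{P}(\K x)$ on a single generator; then $\Omega_\tau B_\tau A$ is identified, up to the usual bookkeeping, with the two-sided Koszul-type complex $\mathcal{P}\circ_\tau\mathcal{C}\circ_\tau\mathcal{P}$ evaluated on $\K x$, and the acyclicity of $\epsilon_\tau$ in this case is equivalent to $H_\bullet(\mathcal{P}\circ_\tau\mathcal{C})$ being the unit, i.e.\ to $\tau$ being Koszul.

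The step I expect to be the main obstacle is the convergence-and-identification part of the ``Koszul $\Rightarrow$ equivalence'' direction: constructing an exhaustive, bounded-below filtration on $\Omega_\tau B_\tau A$ for an \emph{arbitrary} (not necessarily free, not necessarily finite-dimensional) $\mathcal{P}$-algebra $A$, and checking that the associated graded complex really does reduce to the Koszul complex of $\tau$ tensored with $A$. The weight-grading hypothesis is precisely what makes such a filtration available; granted convergence, the remainder is the familiar acyclic-composite computation. Everything else --- that all coalgebras are cofibrant and all algebras fibrant, and the triangle-identity reduction to the counit --- is formal.
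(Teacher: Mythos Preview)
The paper does not supply its own proof of this statement; it simply records it as Proposition~3.15 of \cite{DCH1} and moves on. Your outline is essentially the argument one finds there (and, in the binary quadratic case, in \cite{LV}): the reduction via the triangle identity and the fact that $\Omega_\tau$ reflects weak equivalences to the single question of whether the counit $\epsilon_\tau$ is always a quasi-isomorphism, followed by a weight-filtration spectral sequence whose associated graded is controlled by the Koszul complex $\mathcal{P}\circ_\tau\mathcal{C}$, is exactly the standard route, and your identification of convergence for arbitrary $A$ as the point requiring the weight-grading hypothesis is accurate.

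One small comment on the converse direction: specialising to $A=\mathcal{P}(\K x)$ forces you to pass through the two-sided complex $\mathcal{P}\circ_\tau\mathcal{C}\circ_\tau\mathcal{P}$ and then argue that its acyclicity implies that of $\mathcal{P}\circ_\tau\mathcal{C}$, which is true but needs a short extra step. A slightly more direct choice is to take $A$ to be the \emph{trivial} $\mathcal{P}$-algebra on a vector space $V$ (all operations of arity $\geq 2$ zero); then $B_\tau A=\mathcal{C}(V)$ with zero bar differential, $\Omega_\tau B_\tau A$ is literally $(\mathcal{P}\circ_\tau\mathcal{C})(V)$, and $\epsilon_\tau$ is the augmentation, so its being a quasi-isomorphism for all $V$ is Koszulity on the nose. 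Either route is fine.
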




 In the following proposition we will describe the fibrant and cofibrant objects in the model categories of algebras and coalgebras.

\begin{proposition}\label{propfibrantobjects}
In the model category of algebras over an operad $\mathcal{P}$ every object is fibrant, the cofibrant objects are retracts of quasi-free algebras $(\mathcal{P}(V),d)$ equipped with an exhaustive filtration on $V$,i.e. there is a filtration of the form
$$V_0=\{0\} \subseteq V_1 \subseteq ... \subseteq Colim_i V_i=V,$$
such that $d(V_i) \subseteq \mathcal{P}(V_{i-1})$.
In the model category of coalgebras over a cooperad $\mathcal{C}$ every object is cofibrant, the fibrant objects are given by the quasi-free $\mathcal{C}$-coalgebras. 
 
 \end{proposition}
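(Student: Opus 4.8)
The plan is to dispose of the two ``every object is (co)fibrant'' claims immediately, and then to pin down the remaining class in each case by combining the small object argument with a direct lifting check, using the bar--cobar resolution on the coalgebra side. For the first point: the terminal object in $\mathcal{P}$-algebras is the zero algebra, and the unique map $A\to 0$ is degreewise surjective, hence a fibration, so every $\mathcal{P}$-algebra is fibrant; dually the initial $\mathcal{C}$-coalgebra is $0$, and $0\to C$ is a degreewise monomorphism, hence a cofibration, so every $\mathcal{C}$-coalgebra is cofibrant.

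For the cofibrant algebras I would use that the weak equivalences and fibrations are created by the forgetful functor to chain complexes, so the model structure is transferred and a set of generating cofibrations is obtained by applying the free algebra functor $\mathcal{P}(-)$ to the generating cofibrations $S^{n-1}\to D^n$ and to the generating acyclic cofibrations $0\to D^n$ of chain complexes. By the small object argument, every cofibrant algebra is a retract of a transfinite composite of pushouts of these maps starting from $0$. First I would check that such a cell algebra is exactly a quasi-free $(\mathcal{P}(V),d)$ carrying an exhaustive filtration $V_0=\{0\}\subseteq V_1\subseteq\cdots$ with $d(V_i)\subseteq\mathcal{P}(V_{i-1})$: attaching a cell along $\mathcal{P}(S^{n-1})\to\mathcal{P}(D^n)$ adjoins generators whose differential is the attaching map, which factors through the sub-algebra on the previously attached cells, while the contractible summands coming from $\mathcal{P}(-)$ applied to $0\to D^n$ are absorbed into the filtration in the evident way. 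Conversely, given a cofibrant $A$, I would factor $0\to A$ as a cofibration $0\rightarrowtail QA$ followed by an acyclic fibration $QA\xrightarrow{\sim}A$; by the previous step $QA$ is a retract of a quasi-free algebra of the stated form, and since $A$ is cofibrant the acyclic fibration admits a section, so $A$ is itself a retract of such an algebra. Finally I would verify the reverse inclusion directly: any quasi-free $(\mathcal{P}(V),d)$ filtered as above has the left lifting property against an acyclic fibration $p:E\xrightarrow{\sim}B$, the lift being built multiplicatively by induction on filtration degree, the obstruction to extending it over $V_i$ lying in the acyclic complex $\ker p$ and hence vanishing.

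For the fibrant coalgebras I would run the dual argument. A quasi-free $\mathcal{C}$-coalgebra is one of the form $(\mathcal{C}(W),d)$, cofree as a graded $\mathcal{C}$-coalgebra on a graded vector space $W$; it is fibrant because lifting against an acyclic cofibration $i:A\rightarrowtail B$ of coalgebras amounts, by cofreeness, to extending a map of underlying complexes $A\to W$ along $i$, which is possible since $i$ is an acyclic monomorphism of chain complexes. Conversely, for $C$ fibrant I would use that the unit $\nu_{\alpha}:C\to B_{\alpha}\Omega_{\alpha}C$ of the bar--cobar adjunction is a weak equivalence (by the theorem recalled above) and a degreewise monomorphism, hence an acyclic cofibration, while $B_{\alpha}\Omega_{\alpha}C=\mathcal{C}(\Omega_{\alpha}C)$ is cofree, hence a quasi-free and therefore fibrant coalgebra; lifting $\mathrm{id}_C$ against $\nu_{\alpha}$ (together with the map to the terminal object) then produces a retraction $B_{\alpha}\Omega_{\alpha}C\to C$, exhibiting $C$ as a retract of a quasi-free coalgebra. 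The last point to be checked is that such a retract is again quasi-free, which reflects the fact that a retract of a conilpotent cofree graded coalgebra is again cofree.

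The easy parts are the two ``(co)fibrancy of every object'' observations and the formal retract arguments. The main obstacle will be the cell-complex bookkeeping on the algebra side: matching transfinite cell attachments with the precise filtration condition $d(V_i)\subseteq\mathcal{P}(V_{i-1})$ and correctly absorbing the acyclic $\mathcal{P}(-)$-of-$(0\to D^n)$ summands, together with the inductive lifting that establishes the converse inclusion. On the coalgebra side the corresponding delicate points are confirming that $\nu_{\alpha}$ is genuinely an acyclic cofibration and that the relevant retract of a cofree coalgebra remains cofree.
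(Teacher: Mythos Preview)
The paper does not actually prove this proposition: it simply cites Hinich for the algebra case and Vallette (Theorem~2.1 of \cite{Val1}) for the coalgebra case. So your proposal is not a comparison against the paper's argument but an attempt at a self-contained proof. On the algebra side your outline is the standard one and is essentially correct; the cell-complex bookkeeping you flag is routine.

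On the coalgebra side, however, there is a genuine gap. Your proof that a quasi-free $\mathcal{C}$-coalgebra is fibrant reduces lifting against an acyclic cofibration $i:A\rightarrowtail B$ to extending a linear map along ``an acyclic monomorphism of chain complexes''. But in the model structure recalled just above the proposition, the weak equivalences of $\mathcal{C}$-coalgebras are \emph{not} the quasi-isomorphisms: they are the maps $f$ such that $\Omega_{\tau}f$ is a quasi-isomorphism of $\mathcal{P}$-algebras. An acyclic cofibration $i$ is therefore a degreewise monomorphism with $\Omega_{\tau}i$ a quasi-isomorphism, and this does \emph{not} force $i$ itself to be a quasi-isomorphism of underlying complexes. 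Your extension argument, which relies on the acyclicity of $\mathrm{coker}\,i$ as a chain complex, therefore does not go through. The actual proof in \cite{Val1} proceeds differently, characterising the (acyclic) cofibrations more carefully and using the specific structure of the bar--cobar adjunction rather than a naive chain-level extension.

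A second, smaller issue: the statement asserts that the fibrant coalgebras are exactly the quasi-free ones, not merely retracts thereof. Your route produces $C$ only as a retract of $B_{\alpha}\Omega_{\alpha}C$, and you then appeal to ``a retract of a conilpotent cofree graded coalgebra is again cofree''. This is not obvious and you do not indicate how to prove it; it is not the route taken in \cite{Val1}, where fibrancy and quasi-freeness are matched up directly via the description of fibrations.
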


 \begin{proof}
 For the proof of the algebra case see \cite{Hin1} and for the proof of the coalgebra case see Theorem 2.1 in \cite{Val1}. 
 \end{proof}

 \begin{remark}
  From Proposition \ref{propfibrantobjects} it follows that in the algebra case, $\Omega_{\tau} C$, the cobar construction on a $\mathcal{C}$-coalgebra $C$ with respect to a Koszul twisting morphism $\tau: \mathcal{C} \rightarrow \mathcal{P}$, is a cofibrant object. Similarly in the coalgebra case, all coalgebras of the form $B_{\tau} A$, for some $\mathcal{P}$-algebra $A$ are fibrant. Therefore we can define functorial cofibrant replacements by applying the cobar-bar resolution in the algebra case. In the coalgebra case we have a functorial fibrant replacement given by the bar-cobar resolution.
 \end{remark}

The following two lemmas will be important in section \ref{seccompleteness} to prove the completeness of the Hopf invariants. The first lemma is Lemma 4.9 in \cite{DS1} and the second lemma is Ken Brown's Lemma and can be found as Lemma 9.9 in \cite{DS1}.

\begin{lemma}\label{lemacyclicfibrations}
Let $A$ be a cofibrant object in a model category $\mathbf{C}$ and let $p:Y\rightarrow X$ be an acyclic fibration. Then composition with $p$ induces a bijection
$$p_*:\pi^{l}(A,Y) \rightarrow \pi^l(A,X),$$
where $\pi^l(A,Y)$ is the set of left homotopy classes of maps between $A$ and $Y$.
\end{lemma}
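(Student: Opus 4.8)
The plan is to reduce Lemma~\ref{lemacyclicfibrations} to the abstract machinery of left homotopy in a model category, following the treatment in \cite{DS1} almost verbatim, since the statement is really a formal consequence of the definitions once one sets up good cylinder objects. First I would recall that since $A$ is cofibrant we may choose a \emph{good cylinder object} $A \wedge I$ for $A$ (that is, a factorisation of the fold map $A \sqcup A \to A$ as a cofibration $A \sqcup A \hookrightarrow A \wedge I$ followed by a weak equivalence $A\wedge I \xrightarrow{\sim} A$); by \cite[Lemma 4.4]{DS1} the relation of left homotopy on $\mathrm{Hom}(A,Y)$ is then an equivalence relation, so $\pi^l(A,Y)$ is well-defined, and moreover left homotopy can be detected using any fixed good cylinder object.

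The proof of the lemma itself then splits into the two usual halves. For surjectivity of $p_*$: given a map $h:A \to X$, we want to lift it through $p$. Because $A$ is cofibrant the map $\{0\}\to A$ (from the initial object) is a cofibration, and $p:Y\to X$ is an acyclic fibration, so the lifting axiom in the model category immediately produces a map $\tilde h:A\to Y$ with $p\tilde h = h$; hence $p_*[\tilde h]=[h]$. For injectivity of $p_*$: suppose $f,g:A\to Y$ satisfy $pf \simeq^l pg$, witnessed by a left homotopy $H:A\wedge I \to X$ with respect to a good cylinder object. Now form the lifting square whose top map is $(f,g):A\sqcup A \to Y$, whose bottom map is $H:A\wedge I \to X$, whose left vertical is the cofibration $A\sqcup A\hookrightarrow A\wedge I$, and whose right vertical is the acyclic fibration $p$. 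This square commutes precisely because $H$ restricts to $pf$ and $pg$ on the two ends, so the lifting axiom yields $\widetilde H:A\wedge I \to Y$ which is a left homotopy from $f$ to $g$; therefore $[f]=[g]$ in $\pi^l(A,Y)$, and $p_*$ is injective.

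The one point requiring a little care — and the step I expect to be the main (minor) obstacle — is the compatibility of the chosen good cylinder objects on the source and target sides: to make the injectivity argument go through cleanly one wants the \emph{same} cylinder object $A\wedge I$ to present left homotopy both in $\mathrm{Hom}(A,X)$ and in $\mathrm{Hom}(A,Y)$, and one must know that a left homotopy $pf\simeq pg$ defined via \emph{some} cylinder object can be replaced by one defined via the fixed good cylinder object $A\wedge I$. This is exactly the content of \cite[Lemma 4.4]{DS1} (for $A$ cofibrant, any two notions of left homotopy agree and left homotopy via a good cylinder object can always be arranged), so I would simply invoke that lemma rather than reprove it. With that in hand the two lifting arguments above are purely formal, and there is essentially no computation involved; the whole proof is three or four lines once the cylinder-object bookkeeping is cited from \cite{DS1}.
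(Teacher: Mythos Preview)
Your proposal is correct and is essentially the standard proof from \cite{DS1}; the paper does not give its own argument for this lemma but simply cites it as Lemma~4.9 in \cite{DS1}, so your write-up is exactly what is being invoked.
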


\begin{lemma}\label{lemkbrown}
Let $F:\mathbf{C}\rightarrow \mathbf{D}$ be a functor from a model category $\mathbf{C}$ to a model category $\mathbf{D}$, such that $F $ carries acyclic cofibrations between cofibrant objects to weak equivalences, then $F$ preserves all weak equivalences between cofibrant objects.
\end{lemma}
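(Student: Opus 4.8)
\emph{Proof proposal.} The plan is to use the classical factorization argument for Ken Brown's Lemma. Let $f:A\rightarrow B$ be a weak equivalence between cofibrant objects of $\mathbf{C}$. First I would form the coproduct $A\sqcup B$ together with the map $\varphi=(f,\mathrm{id}_B):A\sqcup B\rightarrow B$ which restricts to $f$ on the first summand and to $\mathrm{id}_B$ on the second. Since $A$ and $B$ are cofibrant, the map $\emptyset\rightarrow A\sqcup B$ is a coproduct of cofibrations, hence a cofibration, so $A\sqcup B$ is cofibrant; moreover the canonical inclusions $A\rightarrow A\sqcup B$ and $B\rightarrow A\sqcup B$ are pushouts of $\emptyset\rightarrow B$ and $\emptyset\rightarrow A$ respectively, hence are cofibrations.

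Next I would apply the factorization axiom to write $\varphi$ as a cofibration $j:A\sqcup B\rightarrow C$ followed by an acyclic fibration $q:C\rightarrow B$. Composing $j$ with the two inclusions gives maps $i_A:A\rightarrow C$ and $i_B:B\rightarrow C$, which are cofibrations (composites of cofibrations) and which satisfy $q\circ i_A=f$ and $q\circ i_B=\mathrm{id}_B$. The object $C$ is cofibrant, being the target of the cofibration $j$ out of the cofibrant object $A\sqcup B$. Since $q$ is an acyclic fibration it is in particular a weak equivalence, and $f$ and $\mathrm{id}_B$ are weak equivalences, so the two-out-of-three property forces $i_A$ and $i_B$ to be weak equivalences; thus $i_A$ and $i_B$ are acyclic cofibrations between cofibrant objects.

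Finally I would apply $F$. By hypothesis $F(i_A)$ and $F(i_B)$ are weak equivalences in $\mathbf{D}$. The relation $F(q)\circ F(i_B)=F(\mathrm{id}_B)=\mathrm{id}_{F(B)}$ together with two-out-of-three shows that $F(q)$ is a weak equivalence, and then $F(f)=F(q)\circ F(i_A)$ is a composite of weak equivalences, hence a weak equivalence, which is the claim. I do not anticipate a genuine obstacle: the only point that uses anything beyond formal manipulation with the model category axioms is verifying that $A\sqcup B$ and $C$ remain cofibrant, which is precisely where the cofibrancy of $A$ and $B$ is needed; everything else is a routine diagram chase. The dual statement for fibrant objects, which is not needed here, would follow by the evident opposite-category argument.
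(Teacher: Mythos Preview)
Your proof is correct and is exactly the standard argument for Ken Brown's Lemma. The paper does not give its own proof of this lemma at all: it simply records the statement and cites Lemma~9.9 of \cite{DS1}, so there is nothing to compare against beyond noting that your argument is precisely the one found in that reference.
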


\section{The Homotopy Transfer Theorem, $\mathcal{P}_{\infty}$-algebras and $\mathcal{C}_{\infty}$-coalgebras}

The final ingredient we need to define the Hopf invariants is the Homotopy Transfer Theorem and the notion of algebras and coalgebras up to homotopy. In this paper we will use slightly different definitions for $\mathcal{P}_{\infty}$-algebras than in the book by Loday and Vallette \cite{LV}, instead of defining a $\mathcal{P}_{\infty}$-algebra as an algebra over the Koszul resolution $\Omega_{op} \mathcal{P}^{\text{!`}}$, we will define a $\mathcal{P}_{\infty}$-algebra as an algebra over the cobar-bar resolution $\Omega_{op} B_{op} \mathcal{P}$. This has the advantage that this resolution always exists and does not require the operad $\mathcal{P}$ to be Koszul. Because the categories of $\Omega_{op} \mathcal{P}^{\text{!`}}$-algebras and  $\Omega_{op} B_{op} \mathcal{P}$-algebras are Quillen equivalent (see Theorem 12.5A in \cite{Fres4}), it does not matter from a homotopy theoretical perspective if we would work with $\Omega_{op} \mathcal{P}^{\text{!`}}$-algebras or with $\Omega_{op} B_{op} \mathcal{P}$-algebras. The main disadvantage is that all the formulas will become more complicated than necessary, but because we will mainly use the Homotopy Transfer Theorem as a theoretical tool to prove completeness of the algebraic Hopf invariant, this is not a serious disadvantage. Altough the results here are presented in a  slightly different way than in \cite{LV}, all the proofs and details can be found in \cite{LV} or are completely analogous to the proofs and details there. For other papers on the Homotopy Transfer Theorem and homotopy algebras  see also \cite{Berg2} and \cite{DHR1}.


There are several equivalent definitions for $\mathcal{P}_{\infty}$-algebras.

\begin{definition}
A $\widetilde{\mathcal{P}_{\infty}}$-algebra is an algebra over $\Omega_{op} B_{op} \mathcal{P}$, the cobar-bar resolution of $\mathcal{P}$.
\end{definition}

\begin{remark}
To distinguish our notion of $\P$-algebra up to homotopy from the one defined in \cite{LV}, we use a tilde.
\end{remark}

The following theorem gives four alternative ways of describing $\widetilde{\mathcal{P}_{\infty}}$-structures on a graded vector space $A$, it can be found in \cite{LV} as Theorem 10.1.22. 

\begin{theorem}
 A $\widetilde{\mathcal{P}_{\infty}}$-algebra structure on a dg vector space $A$ can be described in the following ways:
 \begin{enumerate}
  \item By a morphism of operads $f:\Omega_{op} B_{op} \mathcal{P} \rightarrow End_A$, where $End_A$ is the endomorphism operad of the dg vector space $A$.
  \item By an operadic twisting morphism $\tau:B_{op} \mathcal{P} \rightarrow End_A$.
  \item By a morphism of cooperads $g:B_{op} \mathcal{P} \rightarrow B_{op} End_A$
  \item Or by a square zero coderivation on $B_{op} \mathcal{P}(A)$, the free $B_{op} \mathcal{P}$-coalgebra cogenerated by $A$.
 \end{enumerate}
Therefore we have bijections between the following sets which all describe the set of $\widetilde{\mathcal{P}_{\infty}}$-structures,
$$Hom_{Operads}(\Omega_{op}B_{op}\mathcal{P},End_A)\cong Tw(B \mathcal{P},End_A) \cong$$
$$Hom_{Cooperads}(B_{op} \mathcal{P} , B_{op} End_A) \cong Codiff(B_{op} \mathcal{P}(A)).$$
\end{theorem}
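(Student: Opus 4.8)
The plan is to establish the four-way bijection by constructing a chain of natural isomorphisms, each of which follows from a standard adjunction or universal property already recorded in the excerpt. I would organize the proof around the string
\[
Hom_{Operads}(\Omega_{op}B_{op}\mathcal{P},End_A)\cong Tw(B_{op}\mathcal{P},End_A)\cong Hom_{Cooperads}(B_{op}\mathcal{P},B_{op}End_A)\cong Codiff(B_{op}\mathcal{P}(A)),
\]
treating each $\cong$ as a separate lemma-sized step and then remarking that the composite is the map induced by the identity.

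First I would prove the leftmost bijection. This is exactly the operadic bar-cobar adjunction applied to the cooperad $B_{op}\mathcal{P}$ and the operad $End_A$: by the analogue of Theorem~6.5.10/6.6.3 in \cite{LV} (the operadic version of the theorem stated earlier in the excerpt that $Hom_{\mathcal{P}-alg}(\Omega_\alpha C,A)\cong MC_\alpha(C,A)$), operad maps out of the cobar construction $\Omega_{op}(B_{op}\mathcal{P})$ correspond to operadic twisting morphisms from $B_{op}\mathcal{P}$, where a twisting morphism is a degree $-1$ map satisfying the Maurer--Cartan equation in the convolution (pre-)Lie algebra $Hom(B_{op}\mathcal{P},End_A)$ whose bracket is the one recalled in the section on the pre-Lie algebra associated to an operad. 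So this step is just citing the right instance of a known adjunction; I would spell out that the correspondence sends a morphism to its restriction along the universal twisting morphism $\iota:B_{op}\mathcal{P}\to\Omega_{op}B_{op}\mathcal{P}$.

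Next, the second bijection $Tw(B_{op}\mathcal{P},End_A)\cong Hom_{Cooperads}(B_{op}\mathcal{P},B_{op}End_A)$ is the other half of the same adjunction: an operadic twisting morphism into $End_A$ is equivalently a cooperad morphism into the bar construction $B_{op}End_A$, via the universal twisting morphism $\pi:B_{op}End_A\to End_A$, since $B_{op}\mathcal{P}$ is conilpotent and the bar construction is the cofree-ish conilpotent cooperad functor. The third bijection $Hom_{Cooperads}(B_{op}\mathcal{P},B_{op}End_A)\cong Codiff(B_{op}\mathcal{P}(A))$ uses that $B_{op}End_A$ cofreely corepresents coderivations: a cooperad map $g:B_{op}\mathcal{P}\to B_{op}End_A$ is the same as a square-zero coderivation on the cofree $B_{op}\mathcal{P}$-coalgebra cogenerated by $A$, because $B_{op}End_A(A)\cong \mathcal{T}^c(\cdots)$ and the cogenerating universal property converts the coderivation condition into the coalgebra-morphism condition; the square-zero condition on the coderivation translates into the differential condition making $g$ a dg cooperad map. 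Throughout I would emphasize naturality in $A$ and that composing all three isomorphisms sends the generic $\mathcal{P}_\infty$-structure $f$ to the codifferential on $B_{op}\mathcal{P}(A)$ obtained by extending $f$ as a coderivation.

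The step I expect to be the main obstacle is not any single bijection — each is an instance of a cited adjunction — but rather keeping the bookkeeping of signs, (de)suspensions, and the conilpotence hypotheses consistent, especially reconciling the nonstandard grading convention for the $L_\infty$-operad announced in the remark (every generator in degree $-1$) with the conventions in \cite{LV}. Concretely, the translation between "square-zero coderivation on $B_{op}\mathcal{P}(A)$" and "codifferential" requires the free/cofree dictionary for coalgebras over a cooperad (the coalgebra analogue of Proposition~\ref{propfibrantobjects}), and one must check that the coradical-filtration/conilpotence assumption in our Conventions is exactly what makes a coderivation determined by its corestriction to cogenerators. Since the excerpt says all proofs "can be found in \cite{LV} or are completely analogous," I would present the argument as assembling Theorem~6.5.10, Proposition~6.5.11 and Theorem~10.1.x of \cite{LV} in the operadic setting, and relegate the sign/suspension verification to a remark.
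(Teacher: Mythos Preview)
The paper does not give its own proof of this statement: it simply records the result and refers the reader to \cite{LV}, Theorem~10.1.22. Your proposal is the standard argument that underlies that reference --- chaining the operadic bar--cobar adjunction (both sides) with the Rosetta-stone identification of cooperad maps into $B_{op}End_A$ with codifferentials on the cofree coalgebra --- so there is nothing to compare beyond noting that you have supplied what the paper only cites.
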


The category of $\widetilde{\mathcal{P}_{\infty}}$-algebras can be equipped with two types of morphisms. The first type of morphisms are morphisms as $\Omega_{op} B_{op} \mathcal{P}$-algebras that commute with all the operations coming from the operad $\Omega_{op} B_{op} \mathcal{P}$, these morphisms will be called strict morphisms of $\widetilde{\mathcal{P}_{\infty}}$-algebras. The second type of morphisms are the morphisms that commute only up to a sequence of coherent homotopies  with the operations coming from the operad $\Omega_{op} B_{op}\mathcal{P}$. These morphism are called $\infty$-morphisms and are defined in the following definitions. The advantage of $\infty$-morphisms is that $\infty$-quasi-isomorphisms are always invertible up to homotopy and that the category of $\widetilde{\mathcal{P}_{\infty}}$-algebras with $\infty$-morphisms is equivalent to the homotopy category of $\mathcal{P}$-algebras. 

\begin{definition}
Let $\iota:B_{op}\P \rightarrow \Omega_{op}B_{op}\P\cong \widetilde{\P_\infty}$ be the canonical twisting morphism.  An $\infty$-morphism $f:A \rightarrow B$ between two $\widetilde{\mathcal{P}_{\infty}}$-algebras $A$ and $A'$ is a morphism $f:B_{\alpha}A \rightarrow B_{\alpha} A'$ of $B_{op} \mathcal{P}$-coalgebras. This is equivalent to a sequence of maps $f_{r_i}:A^{\otimes r} \rightarrow A '$,  where $r_i$ runs over a basis of $B_{op} \mathcal{P}(r)$,  satisfying certain coherence conditions. An $\infty$-morphism $f:A \rightarrow A'$ is called an $\infty$-quasi-isomorphism if the component $f_1:A\rightarrow A'$ is a quasi-isomorphism, where $f_1$ is the morphism corresponding to the operadic unit.
\end{definition}

The main reason we are using $\widetilde{\mathcal{P}_{\infty}}$-algebras in this paper is because of the Homotopy Transfer Theorem.

\begin{theorem}\label{thrmhomotopytransfer}
Suppose that we have a   homotopy retract of a chain complex $(V,d_V)$ to a chain complex $(W,d_W)$, i.e. we have maps

$$\xymatrix{
W \ar@(ul,dl)[]|{h} \ar@/^/[rr]|p
&& V. \ar@/^/[ll]|{i} }$$

  Such that the maps $i$ and $p$ are quasi-isomorphisms of chain complexes and $h$ is a homotopy between $ip$ and the identity, i.e. the maps $i$, $p$ and $h$ satisfy the following identity, 
$$Id_W-ip=d_W h + h d_W.$$
Further suppose that we have a $\mathcal{P}$-structure on $W$, then there exists a $\widetilde{\mathcal{P}_{\infty}}$-structure on $V$  and an $\infty$-morphism $I$ such that $W$ and $V$ are $\infty$-quasi-isomorphic and the map $I$ is an $\infty$-quasi-isomorphism, such that  $I_{(1)}$, the arity one component of $I$, is equal to $i$.
\end{theorem}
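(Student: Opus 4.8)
The plan is to use the second of the equivalent descriptions of a $\mathcal{P}_{\infty}$-structure recalled above: a $\mathcal{P}_{\infty}$-structure on a dg vector space $A$ is the same datum as an operadic twisting morphism $\mathcal{C}\to End_A$, where $\mathcal{C}:=B_{op}\mathcal{P}$, i.e. a Maurer--Cartan element in the convolution pre-Lie algebra built from $\mathcal{C}$ and $End_A$. So the problem becomes: transfer such a Maurer--Cartan element along the homotopy retract $(i,p,h)$.

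First I would produce the strict $\mathcal{P}_{\infty}$-structure on $W$ that we actually transfer. The $\mathcal{P}$-algebra structure on $W$ is a morphism of operads $\gamma_W:\mathcal{P}\to End_W$; composing with the canonical twisting morphism $\pi:\mathcal{C}=B_{op}\mathcal{P}\to\mathcal{P}$ (equivalently, restricting $\gamma_W$ along the counit $\Omega_{op}B_{op}\mathcal{P}\to\mathcal{P}$ of the operadic bar--cobar resolution) gives a twisting morphism $\varphi_W:\mathcal{C}\to End_W$. Then I would write down the transferred twisting morphism $\varphi_V:\mathcal{C}\to End_V$ by the standard tree (van der Laan) formula: using conilpotence of $\mathcal{C}$ one has an iterated decomposition $\mathcal{C}\to\mathcal{T}^{c}(\overline{\mathcal{C}})$ into the cofree cooperad on the coaugmentation coideal, and on each tree appearing there one decorates every internal vertex by the corresponding operation of $W$ via $\varphi_W$, every internal edge by the homotopy $h$, every leaf by $i$ and the root by $p$, and composes. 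The same recipe \emph{without} the root decoration, landing in $W$ rather than in $V$, assembles a collection of maps $V^{\otimes n}\to W$, equivalently a morphism of $\mathcal{C}$-coalgebras $\mathcal{C}(V)\to\mathcal{C}(W)$, which is the candidate $\infty$-morphism $i'$; its linear component $i'_{(1)}$ is manifestly $i$.

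Next I would verify the two identities that make these formulas meaningful: that $\varphi_V$ satisfies the Maurer--Cartan equation in $Hom_{\K}(\mathcal{C},End_V)$, and that $i'$ intertwines the codifferentials on $\mathcal{C}(V)$ and $\mathcal{C}(W)$ determined by $\varphi_V$ and $\varphi_W$, so that it is a genuine $\infty$-morphism. Both are tree-combinatorial computations: one applies the total differential to the tree sum and rewrites every term in which $d_W$ hits an internal edge by means of the side condition $Id_W-ip=d_Wh+hd_W$; the $Id_W$-contributions telescope and cancel the ``internal'' differential terms, while the $ip$-contributions cut a tree at that edge and reproduce exactly the quadratic term of the Maurer--Cartan equation (respectively the composite needed for $i'$), the remaining terms being absorbed using that $i$ and $p$ are chain maps. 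Since $i'_{(1)}=i$ is a quasi-isomorphism by hypothesis, $i'$ is then an $\infty$-quasi-isomorphism, so $V$ and $W$ are $\infty$-quasi-isomorphic.

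I expect the only genuinely nontrivial step to be this sign- and bookkeeping-heavy verification that the homotopy relation turns the tree sum into a bona fide twisting morphism; everything else (the translation between the four descriptions of $\mathcal{P}_{\infty}$-structures and the observation that $i'_{(1)}=i$) is formal. This is precisely the computation carried out in \cite{LV}, Section~10.3, for the Koszul resolution $\Omega_{op}\mathcal{P}^{\text{!`}}$ in place of $\Omega_{op}B_{op}\mathcal{P}$; since the only properties used are that $\mathcal{C}$ is a conilpotent cooperad equipped with a canonical twisting morphism to $\mathcal{P}$, passing to $B_{op}\mathcal{P}$ merely enlarges the set of trees indexing the formulas and leaves the argument unchanged, exactly as remarked before the statement.
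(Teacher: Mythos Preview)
Your proposal is correct and is precisely the argument the paper has in mind: the paper does not prove this theorem itself but defers entirely to \cite{LV}, remarking beforehand that although the $\mathcal{P}_{\infty}$-algebras here are $\Omega_{op}B_{op}\mathcal{P}$-algebras rather than $\Omega_{op}\mathcal{P}^{\text{!`}}$-algebras, ``all the proofs and details can be found in \cite{LV} or are completely analogous to the proofs and details there.'' Your sketch of the van der Laan tree formula together with the observation that only conilpotence of $\mathcal{C}$ and the canonical twisting morphism are used---so that replacing $\mathcal{P}^{\text{!`}}$ by $B_{op}\mathcal{P}$ is harmless---is exactly that analogy made explicit.
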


Dually we can also define the notion of a coalgebra up to homotopy. This is done completely analogous to the algebra case. 

\begin{definition}
A $\widetilde{\mathcal{C}_{\infty}}$-coalgebra is a coalgebra over the cooperad $B_{op} \Omega_{op} \mathcal{C}$ and an $\infty$-morphism $f:C \rightarrow C'$ of $\widetilde{\mathcal{C}_{\infty}}$-coalgebras is a morphism of $\Omega_{op} \mathcal{C}$-algebras  $f:\Omega_{\pi} \C \rightarrow \Omega_{\pi} C'$, where $\pi:B_{op}\Omega_{op} \C \rightarrow \Omega_{op}\C$ is the canonical twisting morphism. This is equivalent to a sequence of maps $f_{r_i}:C \rightarrow C'^{\otimes r}$, where $r_i$ runs over a basis of $\Omega_{op} \mathcal{C}(r)$, satisfying a sequence of compatibility conditions. 
\end{definition}

Similarly to algebras, we also have a Homotopy Transfer Theorem for $\widetilde{\mathcal{C}_{\infty}}$-coalgebras.

\begin{theorem}
Let $(V,d_V)$ be a retract of $(W,d_W)$ as in Theorem \ref{thrmhomotopytransfer} and suppose that we have $\mathcal{C}$-coalgebra structure on $W$. Then there exists a $\widetilde{\mathcal{C}_{\infty}}$-structure on $V$ such that $V$ and $W$ are $\infty$-quasi-isomorphic and  the morphism $i$ extends to an $\infty$-quasi-isomorphism $I:W \rightarrow V$.
\end{theorem}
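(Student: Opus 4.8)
The plan is to deduce this from Theorem~\ref{thrmhomotopytransfer}, the homotopy transfer theorem for $\mathcal{P}_{\infty}$-algebras, by linear duality, and then to observe that the resulting formulas require no finiteness hypothesis. The operadic bar and cobar constructions interchange under arity-wise linear duality, so that $\Omega_{op}B_{op}(\mathcal{C}^{*})\cong(B_{op}\Omega_{op}\mathcal{C})^{*}$ and, for finite dimensional $V$, a $\mathcal{C}_{\infty}$-coalgebra structure on $V$ is precisely a $(\mathcal{C}^{*})_{\infty}$-algebra structure on $V^{*}$, with $\infty$-morphisms corresponding to $\infty$-morphisms in the opposite direction. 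Dualizing the homotopy retract $(W,V,i,p,h)$ gives a homotopy retract $(W^{*},V^{*},p^{*},i^{*},h^{*})$ of the kind appearing in Theorem~\ref{thrmhomotopytransfer} --- the relation $\mathrm{Id}_{W}-ip=d_{W}h+hd_{W}$ dualizing to the corresponding relation for $W^{*}$ --- and the $\mathcal{C}$-coalgebra structure on $W$ dualizes to a $\mathcal{C}^{*}$-algebra structure on $W^{*}$. Theorem~\ref{thrmhomotopytransfer} then produces a $(\mathcal{C}^{*})_{\infty}$-structure on $V^{*}$ and an $\infty$-quasi-isomorphism with linear term $p^{*}$; dualizing back yields the asserted $\mathcal{C}_{\infty}$-structure on $V$ and an $\infty$-quasi-isomorphism $i'\colon W\to V$ whose linear component is $p$ (equivalently $i$, after the evident relabelling of the two comparison maps of a retract), hence a quasi-isomorphism.

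To dispense with the finiteness assumption I would instead write down the transferred structure by the tree formulas dual to those used to prove Theorem~\ref{thrmhomotopytransfer} in \cite{LV}. For a basis element of $B_{op}\Omega_{op}\mathcal{C}$ built from a tree $t$ with $n$ leaves, the corresponding component is the composite $V\xrightarrow{\,i\,}W\longrightarrow W^{\otimes n}\xrightarrow{\,p^{\otimes n}\,}V^{\otimes n}$ whose middle map is obtained by iterating the comultiplication $\Delta_{W}$ along the shape of $t$ and inserting the homotopy $h$ on every internal edge; the components of $i'\colon W\to V$ are the analogous tree sums with the identity in place of the initial map $i$, so that $i'_{(1)}=p$. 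These composites make sense for arbitrary $V$ and $W$, and the content of the theorem is that they assemble into a square-zero derivation on the free $\Omega_{op}\mathcal{C}$-algebra $\Omega_{op}\mathcal{C}(V)$ --- equivalently a $\mathcal{C}_{\infty}$-coalgebra structure on $V$ --- and that $i'$ induces a morphism of $\Omega_{op}\mathcal{C}$-algebras $\widehat{i'}\colon\Omega_{\pi}V\to\Omega_{\pi}W$. Because $i'_{(1)}=p$ is a quasi-isomorphism, $i'$ is an $\infty$-quasi-isomorphism, and since such maps of $\mathcal{C}_{\infty}$-coalgebras are invertible, $W$ and $V$ are $\infty$-quasi-isomorphic.

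The step I expect to be the main obstacle is exactly this verification: showing that the transferred tree sum really satisfies the $\mathcal{C}_{\infty}$-relations, i.e.\ gives a square-zero (co)derivation rather than merely a differential, and that $i'$ is a strict morphism of the resulting cofree coalgebras. In the finite dimensional case this is automatic from the dualization above; in general one either invokes the coalgebra form of the homological perturbation lemma --- after normalizing $h$ so that $h^{2}=0$, $ph=0$ and $hi=0$, which is always possible --- together with the conilpotence of $\mathcal{C}$ to guarantee convergence, or one checks the (co)derivation property directly by bookkeeping over the tree-indexed basis. Either way the argument is formally dual to the one in \cite{LV}, which is why the theorem is indeed ``completely analogous to the algebra case''.
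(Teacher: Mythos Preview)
Your proposal is correct and aligns with the paper's own treatment, which in fact gives no proof at all beyond the sentence ``This theorem is not explicitly stated or proven in \cite{LV}, but the proof is completely analogous to the algebra case.'' Both of your strategies---linear dualization in the finite-dimensional case, and writing out the dual tree formulas in general---are precisely what ``completely analogous'' is meant to encode here, so you have supplied the details the paper omits rather than taken a different route. Your observation that the linear component of $i'$ is really $p$ rather than $i$ (since $i'\colon W\to V$ goes in the direction of $p$) is correct; the statement's phrasing is slightly loose on this point, and your parenthetical handles it appropriately.
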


This theorem is not explicitly stated or proven in \cite{LV}, but the proof is completely analogous to the algebra case.

Because of the Homotopy Transfer Theorems, it is always possible to equip the homology of a $\mathcal{P}$-algebra $A$ (resp. $\mathcal{C}$-coalgebra $C$) with an $\infty$-structure such that we have a weak equivalence between $A$ and $H_*(A)$ (resp. $C$ and $H_*(C)$). From now on we will always assume that the homology is equipped with the appropriate $\infty$-structure.

\begin{convention}
In the rest of this paper we will assume that whenever we take the homology of a $\mathcal{P}$-algebra $A$ (resp. $\mathcal{C}$-coalgebra $C$) it is equipped with the $\widetilde{\mathcal{P}_{\infty}}$-structure (resp. $\widetilde{\mathcal{C}_{\infty}}$-structure) coming from the Homotopy Transfer Theorem, we therefore have an $\infty$-quasi-isomorphism between $A$ and $H_{*}(A)$ (resp. $C$ and $H_*(C)$).
\end{convention}

\part{Algebraic Hopf invariants}

\section{An $SL_{\infty}$-structure on $Hom_{\K}(C,A)$}\label{seclinfty}

Let $C$ be a $\mathcal{C}$-coalgebra, let $A$ be a $\mathcal{P}$-algebra and let $\tau:\mathcal{C} \rightarrow \mathcal{P}$ be an operadic twisting morphism. In this section we will describe an $SL_{\infty}$-structure on the convolution algebra $Hom_{\K}(C,A)$, such that the Maurer-Cartan elements of this $SL_{\infty}$-algebra are the twisting morphisms relative to the operadic twisting morphism $\tau:\mathcal{C} \rightarrow \mathcal{P}$. We will also explain that the gauge equivalence relation on the Maurer-Cartan elements is equivalent to the homotopy relation on the corresponding morphisms of $\mathcal{P}$-algebras.

To construct an $SL_{\infty}$-structure on $Hom_{\K}(C,A)$, we first prove in Proposition \ref{propconvoperad} that $Hom_{\K}(C,A)$ is an algebra over the convolution operad $Hom_{\K}(\mathcal{C},\mathcal{P})$.  We then construct the $SL_{\infty}$-structure on \\ $Hom_{\K}(C,A)$, by defining a morphism from the $SL_\infty$-operad to $Hom_{\K}(\mathcal{C}, \mathcal{P})$. 

Because the $SL_{\infty}$-operad is defined as the cobar construction on $\mathcal{COCOM}$, specifying a morphism from $SL_{\infty}$ to an operad $\mathcal{Q}$ is the same as specifying a twisting morphism $\mathcal{COCOM} \rightarrow Q$ (see Theorem \ref{thrm6.5.7}). Because of Lemma \ref{lemcocomunit}, there is an isomorphism between $Hom_{\K}(\mathcal{COCOM},\mathcal{Q})$ and $\mathcal{Q}$. A twisting morphism from $\mathcal{COCOM}$ to $\mathcal{P}$ is therefore the same as a Maurer-Cartan element in the pre-Lie algebra associated to $\mathcal{Q}$. 

A twisting morphism $\tau:\mathcal{C} \rightarrow \mathcal{P}$ is therefore equivalent to  a map of operads $SL_{\infty} \rightarrow   Hom_{\K}(\mathcal{C} , \mathcal{P})$ and therefore defines an $SL_{\infty}$-structure on $Hom_{\K}(C,A)$. An explicit description of the $SL_\infty$-structure on $Hom_{\K}(C,A)$ is as follows. Let $f_1,...,f_n \in Hom_\K(C,A)$, then we define the differential of $f_i$ by $d_{Hom_\K(C,A)}(f)=d_A \circ f +(-1)^{\vert f \vert }f \circ d_C$. The higher products  $l_n(f_1,...,f_n)$ are defined by the following sequence of maps
\[
C \xrightarrow{\Delta_C^n} \C \otimes C^{\otimes n} \xrightarrow{\sum_{\sigma \in \Sigma_n}\tau \otimes f_{\sigma(1)} \otimes ... \otimes f_{\sigma(n)}} \P \otimes A^{\otimes n} \xrightarrow{\gamma_A} A,
\]
where $\Delta^n_{\C}$ is the arity $n$ part of the coproduct of $C$ and $\gamma_A$ is the structure map of the $\P$-algebra $A$.

We begin by showing how the convolution algebra is an algebra over the convolution operad.

\begin{proposition}\label{propconvoperad}
Let $(C,\Delta_C)$ be a $\mathcal{C}$-coalgebra and $(A,\mu_A)$ a $\mathcal{P}$-algebra, the convolution algebra $Hom_{\K}(C,A)$ is then an algebra over the convolution operad $Hom_{\K}(\mathcal{C},\mathcal{P})$.
\end{proposition}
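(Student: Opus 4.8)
The plan is to equip $Hom_{\K}(C,A)$ with an action of the convolution operad by exhibiting a morphism of operads $\Phi\colon Hom_{\K}(\mathcal{C},\mathcal{P})\to End_{Hom_{\K}(C,A)}$. First I would write down the structure maps. For $n\geq 1$, an element $\varphi\in Hom_{\K}(\mathcal{C}(n),\mathcal{P}(n))$ and maps $f_1,\dots,f_n\in Hom_{\K}(C,A)$, set
$$\Phi(\varphi)(f_1,\dots,f_n)\;=\;\Bigl(\, C \xrightarrow{\;\Delta_C^{(n)}\;} \mathcal{C}(n)\otimes_{S_n} C^{\otimes n} \xrightarrow{\;\varphi\,\otimes\,(f_1\otimes\cdots\otimes f_n)\;} \mathcal{P}(n)\otimes_{S_n} A^{\otimes n} \xrightarrow{\;\mu_A^{(n)}\;} A \,\Bigr),$$
where $\Delta_C^{(n)}$ is the arity-$n$ component of the $\mathcal{C}$-coalgebra structure map $\Delta_C\colon C\to\mathcal{C}\circ C$ and $\mu_A^{(n)}$ the arity-$n$ component of the $\mathcal{P}$-algebra structure map $\mu_A\colon\mathcal{P}\circ A\to A$. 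The first point to check is that the middle arrow descends to the $S_n$-coinvariants: this holds precisely because the $S_n$-action on $Hom_{\K}(\mathcal{C}(n),\mathcal{P}(n))$ is defined by $\varphi^{\sigma}(x)=\sigma\bigl(\varphi(x^{\sigma^{-1}})\bigr)$, so that $\varphi$ intertwines the $\mathcal{C}(n)$- and $\mathcal{P}(n)$-slots while the tuple $(f_1,\dots,f_n)$ intertwines the tensor-factor slots (up to the Koszul signs from permuting the $f_i$), and the two contributions match. One then verifies that $\Phi$ is $S_n$-equivariant, $\Phi(\varphi^{\sigma})=\Phi(\varphi)^{\sigma}$ with $\sigma$ acting on the right by permuting the inputs $f_i$, which is immediate from the same formula, and that $\Phi$ commutes with differentials, which follows from $\Delta_C$ and $\mu_A$ being chain maps together with the Leibniz rule for the convolution differential $\partial$.

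The main step is compatibility of $\Phi$ with operadic composition. Given $\varphi\in Hom_{\K}(\mathcal{C}(n),\mathcal{P}(n))$ and $\psi_i\in Hom_{\K}(\mathcal{C}(k_i),\mathcal{P}(k_i))$, both $\Phi\bigl(\gamma(\varphi;\psi_1,\dots,\psi_n)\bigr)$ and the iterated composite of $\Phi(\varphi)$ with the $\Phi(\psi_i)$ in $End_{Hom_{\K}(C,A)}$ unfold into a chain of maps from $C$ to $A$. The left-hand side inserts the cooperad decomposition of $\mathcal{C}$ and the operadic composition of $\mathcal{P}$ in the middle of a single copy of $\Delta_C$ and $\mu_A$; the right-hand side applies $\Delta_C$ twice at the source end and $\mu_A$ twice at the target end. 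The coassociativity axiom for the $\mathcal{C}$-coalgebra $C$ (compatibility of $\Delta_C$ with the cooperad structure of $\mathcal{C}$) and the associativity axiom for the $\mathcal{P}$-algebra $A$ (compatibility of $\mu_A$ with the operadic structure of $\mathcal{P}$) are exactly what is needed to identify the two composites. Unitality is separate and easy: the operadic unit $\mathrm{id}_{\K}\in Hom_{\K}(\mathcal{C}(1),\mathcal{P}(1))=Hom_{\K}(\K,\K)$ is sent to $\mathrm{id}_{Hom_{\K}(C,A)}$ because $\Delta_C^{(1)}=\mathrm{id}_C$ and $\mu_A^{(1)}=\mathrm{id}_A$ under the conventions $\mathcal{C}(1)=\mathcal{P}(1)=\K$ and the (co)unitality of the structure maps.

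The part that requires genuine care is not the conceptual content but the bookkeeping: tracking the Koszul signs produced when moving the $f_i$ past generators of $\mathcal{C}$ and past one another, and checking that every intermediate formula is well-defined modulo the $S_n$-coinvariant tensor products. A cleaner alternative that I would at least indicate is the observation that $Hom_{\K}(-,-)$ carries a canonical lax-monoidal-type natural transformation $Hom_{\K}(\mathcal{C},\mathcal{P})\circ Hom_{\K}(C,A)\longrightarrow Hom_{\K}(\mathcal{C}\circ C,\ \mathcal{P}\circ A)$; precomposing with $C\xrightarrow{\Delta_C}\mathcal{C}\circ C$ and postcomposing with $\mathcal{P}\circ A\xrightarrow{\mu_A}A$ then produces the map $Hom_{\K}(\mathcal{C},\mathcal{P})\circ Hom_{\K}(C,A)\to Hom_{\K}(C,A)$ in one stroke, and the operad-algebra axioms follow formally from naturality combined with the coalgebra and algebra axioms, with no hand computation of signs. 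Either route yields exactly the asserted statement, that $Hom_{\K}(C,A)$ is an algebra over the convolution operad $Hom_{\K}(\mathcal{C},\mathcal{P})$.
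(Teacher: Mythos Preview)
Your proposal is correct and follows essentially the same approach as the paper: you define the action of $\varphi\in Hom_{\K}(\mathcal{C}(n),\mathcal{P}(n))$ on $f_1,\dots,f_n$ by the composite $C\xrightarrow{\Delta_C}\mathcal{C}\circ C\to\mathcal{C}(n)\otimes C^{\otimes n}\xrightarrow{\varphi\otimes f_1\otimes\cdots\otimes f_n}\mathcal{P}(n)\otimes A^{\otimes n}\xrightarrow{\mu_A}A$, exactly as the paper does. The paper simply asserts that the verification of the algebra axioms is ``a straightforward consequence of the definition of the convolution operad'', whereas you spell out the equivariance, differential, unit, and composition checks (and even sketch the cleaner lax-monoidal argument); this is more detail, not a different route.
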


\begin{proof}
The algebra structure on $Hom_{\K}(C,A)$ is defined as follows. Let $\gamma \in Hom_{\K}(\mathcal{C},\mathcal{P})(n)$ and $f_1, ... ,f_n \in Hom_{\K}(C,A)$, then we define $\gamma(f_1,..,f_n)$ as 
$$\gamma(f_1,...,f_n):=C \xrightarrow{\Delta_C} \mathcal{C} \circ C \xrightarrow{pr_n} \mathcal{C}(n) \otimes C^{\otimes n} $$
$$\xrightarrow{\gamma \otimes f_1 \otimes ... \otimes f_n} \mathcal{P}(n) \otimes A^{\otimes n} \xrightarrow{\mu_A} A.$$
The map $pr_n$ is here the projection on the arity $n$ part of the composition product. It is a straightforward consequence of the definition of the convolution operad that this defines an $Hom_{\K}(\mathcal{C},\mathcal{P})$ algebra structure on the dg vector space $Hom_{\K}(C,A)$.
\end{proof}

The next step is to show that the set of morphisms from the $SL_{\infty}$-operad to an operad $\mathcal{Q}$ is  isomorphic to the set of Maurer-Cartan elements in $\mathcal{Q}$.



The main theorem of this section is the following.

\begin{theorem}\label{thrmlinfty}
Let $\tau:\mathcal{C} \rightarrow \mathcal{P}$ be an operadic twisting morphism from a  cooperad $\mathcal{C}$ to an operad $\mathcal{P}$. Let $C$ be a $\mathcal{C}$-coalgebra and let  $A$ be a $\mathcal{P}$-algebra, then the $SL_{\infty}$-structure on $Hom_{\K}(C,A)$ described above is natural in both $C$  and $A$ and has the following properties. 
\begin{enumerate} 
  \item The Maurer-Cartan elements with respect to this $SL_{\infty}$-structure are the twisting morphisms relative to $\tau$ in $Hom_{\K}(C,A)$.
  \item Let $\hat{f}$ and $\hat{g}$ be two Maurer-Cartan elements in $Hom_{\K}(C,A)$ and let $f:\Omega_{\tau} C \rightarrow A$ and $g:\Omega_{\tau} C \rightarrow A$ be the corresponding $\mathcal{P}$-algebra maps, then $f$ and $g$ are homotopic in the model category of $\mathcal{P}$-algebras if and only if the Maurer-Cartan elements $\hat{f}$ and $\hat{g}$ are gauge equivalent. 
\end{enumerate}
\end{theorem}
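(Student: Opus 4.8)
The plan is to build the $L_\infty$-structure on $\mathrm{Hom}_{\K}(C,A)$ by transporting the operad morphism $L_\infty \to \mathrm{Hom}_{\K}(\mathcal{C},\mathcal{P})$ along the algebra structure of Proposition \ref{propconvoperad}, and then to identify both the Maurer-Cartan set and the gauge relation in terms of the bar-cobar adjunction. First I would make precise the correspondence sketched in the discussion preceding the theorem: a morphism of operads $L_\infty \to \mathcal{Q}$ is the same as an operadic twisting morphism $\mathcal{COCOM} \to \mathcal{Q}$ (since $L_\infty = \Omega_{op}\mathcal{COCOM}$), which by Lemma \ref{lemcocomunit} is the same as a degree $-1$ element $m \in \mathcal{Q}$ satisfying the Maurer-Cartan equation $\partial m + \tfrac12\{m,m\}=0$ in the pre-Lie (hence Lie) algebra associated to $\mathcal{Q}$. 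Applying this with $\mathcal{Q}=\mathrm{Hom}_{\K}(\mathcal{C},\mathcal{P})$, the twisting morphism $\tau:\mathcal{C}\to\mathcal{P}$ is precisely such an element $m_\tau$, since the Maurer-Cartan equation for $\tau$ in the convolution operad unwinds to the defining equation of an operadic twisting morphism. Composing $L_\infty \to \mathrm{Hom}_{\K}(\mathcal{C},\mathcal{P})$ with the $\mathrm{Hom}_{\K}(\mathcal{C},\mathcal{P})$-action on $\mathrm{Hom}_{\K}(C,A)$ gives the desired $L_\infty$-algebra; naturality in $C$ and $A$ is immediate because the action in Proposition \ref{propconvoperad} is built functorially out of $\Delta_C$ and $\mu_A$.

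For part (1), I would unwind the Maurer-Cartan equation $\sum_{n\geq 1}\frac{1}{n!}l_n(\phi,\dots,\phi)=0$ for $\phi\in\mathrm{Hom}_{\K}(C,A)$ using the explicit brackets: $l_1$ is the convolution differential $\partial$, and $l_n(\phi,\dots,\phi)$ is obtained by feeding $\tau^{\otimes(\text{corolla})}$ and $\phi^{\otimes n}$ through $\Delta_C$ and $\mu_A$. Assembling these terms over all $n$ reconstitutes exactly the composite $C\xrightarrow{\Delta_C}\mathcal{C}\circ C \xrightarrow{\tau\circ\phi}\mathcal{P}\circ A\xrightarrow{\mu_A}A$, i.e. $\star_\tau(\phi)$; so the $L_\infty$ Maurer-Cartan equation becomes $\partial(\phi)+\star_\tau(\phi)=0$, the defining equation of $MC_\tau(C,A)$. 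The combinatorial bookkeeping here is essentially the same computation that identifies the cobar differential $d_\Omega = d_1+d_2$, and I would cite the analogous computation in \cite{LV} (and \cite{DHR1} for the case $\tau=\iota$) rather than redo it in full. Combined with the bijection $MC_\tau(C,A)\cong \mathrm{Hom}_{\mathcal{P}\text{-}alg}(\Omega_\tau C, A)$ from the theorem after Proposition ... (Proposition 11.3.1 in \cite{LV}), this gives the first claim.

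For part (2), the strategy is to promote the bijection of part (1) to a statement about cylinder objects. A Maurer-Cartan element of $\mathrm{Hom}_{\K}(C,A)\otimes\Omega_1$ is, by the extension-of-scalars construction and part (1) applied to $A\otimes\Omega_1$, the same as a $\mathcal{P}$-algebra map $\Omega_\tau C \to A\otimes\Omega_1$; under the identification $A\otimes\Omega_1 \cong A^{[0,1]}$ (a path object for $A$, valid since every $\mathcal{P}$-algebra is fibrant by Proposition \ref{propfibrantobjects}), this is exactly a right homotopy between the two endpoint maps $f,g:\Omega_\tau C\to A$. Evaluating at $t=0,1$ matches the endpoint conditions defining gauge equivalence with the endpoint conditions defining the right homotopy. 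So gauge-equivalent Maurer-Cartan elements correspond to right-homotopic maps; and since $\Omega_\tau C$ is cofibrant (it is quasi-free on $C$ with the coradical filtration of $C$ giving the required exhaustive filtration, cf. the remark after Proposition \ref{propfibrantobjects}) and every object is fibrant, left homotopy, right homotopy and the model-categorical homotopy relation all coincide, giving the "if and only if". The main obstacle I anticipate is the compatibility of the de Rham algebra $\Omega_1$ with the $L_\infty$-brackets well enough to see that $A\otimes\Omega_1$ is genuinely a path object in the model category of $\mathcal{P}$-algebras — i.e. that the two evaluation maps $A\otimes\Omega_1\to A$ are fibrations and the inclusion $A\to A\otimes\Omega_1$ is a weak equivalence — and matching the slightly different normalizations of "gauge equivalence via $\Omega_1$" versus "right homotopy" without sign or factorial discrepancies; everything else is a transport of structure along adjunctions already recorded in the excerpt.
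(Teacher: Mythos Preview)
Your proposal is correct and follows essentially the same route as the paper: construct the $L_\infty$-structure by interpreting $\tau$ as a Maurer-Cartan element in the convolution operad (hence an operad map $L_\infty\to\mathrm{Hom}_\K(\mathcal{C},\mathcal{P})$), unwind the brackets to identify the $L_\infty$ Maurer-Cartan equation with $\partial(\phi)+\star_\tau(\phi)=0$, and for part (2) use the bijection $MC_\tau(C,A\otimes\Omega_1)\cong\mathrm{Hom}_{\mathcal{P}\text{-alg}}(\Omega_\tau C,A\otimes\Omega_1)$ to match gauge equivalence with homotopy. Your anticipated obstacle about $A\otimes\Omega_1$ is handled in the paper simply by citing \cite{Val1}; you are also slightly more careful than the paper in calling $A\otimes\Omega_1$ a \emph{path} object and invoking cofibrancy of $\Omega_\tau C$ to equate right homotopy with the model-categorical notion, whereas the paper speaks of a ``cylinder object'' but uses it the same way you do.
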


\begin{proof}
According to \cite{LV}, the set of twisting morphisms is given by all $\phi \in Hom_\K(C,A)_0$, such that $\phi$ satisfies the Maurer-Cartan equation,
$$\partial(\phi)+\star_{\tau}(\phi)=0,$$
where $\star_{\tau}$ is the operator given by
$$\star_{\tau}(\phi):C \xrightarrow{\Delta_C} \mathcal{C} \circ C \xrightarrow{\tau \circ \phi} \mathcal{P} \circ A \xrightarrow{\mu_A} A.$$
So we have to show that the Maurer-Cartan equation from \cite{LV} is the same as the Maurer-Cartan equation in the $SL_{\infty}$-algebra $Hom_{\K}(C,A)$. Therefore we will first make the operations $l_n$ in the $SL_{\infty}$-structure explicit. This is done in a similar way as for the Lie algebra case  in section 11.1.2 of \cite{LV}. The operation $l_n(x_1,...,x_n)$ is defined as the image of $\mu_n$, the arity $n$ element of $\mathcal{COCOM}$, under the twisting morphism $\tau$. When worked out explicitly it is given  by the composite
$$l_n(x_1,...,x_n):C \xrightarrow{\Delta_C} \mathcal{C}(C) \twoheadrightarrow \left( \mathcal{C}(n) \otimes C^{\otimes n} \right)^{\Sigma_n} \rightarrow \mathcal{C}(n)\otimes C^{\otimes C}$$
$$\xrightarrow{\sum_{\sigma \in \Sigma_n} (-1)^{\epsilon} \tau\otimes x_{\sigma (1)} \otimes ... \otimes x_{\sigma (n)}} \mathcal{P}(n) \otimes A^{\otimes n} \rightarrow A.$$
Where the sign $\epsilon$ is coming from the Koszul sign rule. From this it follows that $\star_{\tau}(\phi)$ is equal to $\sum_{n \geq 2} \frac{1}{n!} l_n(\phi,...,\phi)$, therefore the Maurer-Cartan equation defining twisting morphisms is equal to the Maurer-Cartan equation coming from the $SL_{\infty}$-structure. 


To show that the $SL_{\infty}$-structure is natural in $A$, we observe that a morphism $f:A \rightarrow B$ of $\mathcal{P}$-algebras induces a map between convolution algebras $\tilde{f}:Hom_{\K}(C,A) \rightarrow Hom_{\K}(C,B)$ by composition with $f$. Since $f$ is an algebra morphism it commutes with the $\mathcal{P}$ multiplication maps $\mu_A:\mathcal{P} \circ A \rightarrow A$ and $\mu_B:\mathcal{P}  \circ B \rightarrow B$. Therefore it induces a morphism between the corresponding $SL_{\infty}$-algebras, which proves the naturality in $A$, the naturality in $C$ is shown in an analogous manner. 

We prove the implicaition that gauge equivalence implies homotopy equivalence of the second part of the theorem as follows. Recall from \cite{Val1}, that a path object for a $\P$-algebra $A$ is given by $A \otimes \Omega_1$ and that two maps $f:\Omega_\tau C \rightarrow A$ and $g:\Omega_\tau C \rightarrow A$ are homotopic if there exists a map $H:\Omega_\tau C \rightarrow A\otimes \Omega_1$, such that $H\vert_{t=0}=f$ and $H\vert_{t=1}=g$. Suppose that we have a gauge equivalence $\hat{H}$ between $\hat{f}$ and $\hat{g}$, this is a Maurer-Cartan element $\hat{H} \in Hom_\K(C,A) \otimes \Omega_1$. Note that we have an inclusion of $Hom_{\K}(C,A)\otimes \Omega_1 \subset Hom_{\K}(C,A \otimes \Omega_1)$. So every gauge $\hat{H}$ between $\hat{f}$ and $\hat{g}$ defines a homotopy between $f$ and $g$. 

The converse is harder to prove since $\Omega_1$ is infinite-dimensional. Because $\Omega_1$ is infinite-dimensional we do not have an isomorphism  between $Hom_{\K}(C,A)\otimes \Omega_1$ and $Hom_{\K}(C,A \otimes \Omega_1)$, we therefore need to use some different techniques and this is done in Theorem 2.4 of \cite{RNW2}.

\end{proof}

Several versions of this theorem are already known in the literature, in \cite{LV} Loday and Vallette prove a similar result for binary quadratic operads and cooperads in Proposition 11.1.1 and Corollary 11.1.2 and in \cite{DHR1} Dolgushev, Hoffnung and Rogers show that there exists an $SL_{\infty}$-structure on the convolution algebra when $\mathcal{C}=B_{op} \mathcal{P}$. This theorem generalizes both results and gives a shorter and more conceptual proof than in \cite{DHR1}.

\section{Example: The classical Hopf invariant}

In this section we will describe a chain level version of the classical Hopf invariant and explain how we will generalize it to the algebraic setting. This section should be seen as a motivation for our generalizations in the next sections. Most of this section comes from  Example 1.7 in Section 1 of \cite{SW2}. In this section, and only in this section, we use the cochains with coefficients in $\Z$.

The classical Hopf invariant is an invariant of maps $f:S^3 \rightarrow S^2$ and is constructed using the associative bar construction. The goal is to construct a pairing $\eta:H_*(S^3) \times H_* (B_{\tau} C^*(S^2)) \rightarrow \Z$, which is an invariant of the homotopy classes of maps from $S^3$ to $S^2$. To do this denote by $C^*(S^2)$  the singular cochains on $S^2$ and let $\omega \in C^2(S^2)$ be  a cocycle that represents the generator of the cohomology of $S^2$, such that $\omega \cup \omega=0$. Since the singular cochains are an associative algebra we can take the associative bar construction $B_{\tau} (C^*(S^2))$ with respect to the twisting morphism $\tau:Ass^{\vee}_{1} \rightarrow Ass_{0}$ between the coassociative operad with a coproduct of degree $1$ and the associative operad with a product of degree $0$. Since $\omega$ is a cocycle and $\omega^2=0$, the  element $\omega \otimes \omega$ is then also a cycle in the bar construction. 

The next step is to  pull the form $\omega \otimes \omega$ back to $f^* \omega \otimes f^* \omega \in B_{\tau} C^*(S^3)$. The cocycle $f^*\omega$ is exact since $H^2(S^3)=0$, therefore   there exists a form $d^{-1} f^* \omega \in C^1(S^3)$. It is straightforward to see that $d^{-1}f^*\omega \otimes f^* \omega$  is a coboundary cobounding $f^* \omega \otimes f^* \omega$ and $d^{-1} f^* \omega \cup f^* \omega$, where $\cup$ is the chain level version of the cup product on $C^*(S^3)$. The cocycle $f^*\omega \otimes f^* \omega$ is therefore homologous to $d^{-1} f^* \omega \cup f^* \omega$.

This version of the classical Hopf invariant of a map $f:S^3 \rightarrow S^2$ is now defined as
$$\int_{\alpha}(d^{-1}f^* \omega \cup f \omega) \in \Z.$$
Which is the evaluation of the $3$-form $d^{-1}f^* \omega \cup f^* \omega$ on the fundamental class $\alpha$ of $S^3$. It can be shown that this construction is independent of choices and defines an invariant of the map $f$, which is equal to Hopf's classical definition of the Hopf invariant. 

So what we have done in this example is specifying a pairing 
$$\eta:H_*(S^3) \otimes H^*(B_{\tau} C^*(S^2)) \rightarrow \Z.$$
This is equivalent to specifying a linear map  $H_*(S^3) \rightarrow H_*(\Omega_{\tau} C_*(S^2))$. The goal of our generalization of the Hopf invariant is therefore to associate to each map of spaces $f:X \rightarrow Y_{\Q}$ a linear map $mc_{\infty}:\tilde{H}_*(X) \rightarrow \pi_*(Y_{\Q})$ which is an invariant of the homotopy class of the map $f$.


\section{Algebraic Hopf invariants}

Let $C$ and $D$ be coalgebras over a cooperad $\mathcal{C}$ and let $\iota:\mathcal{C} \rightarrow \Omega_{op} \mathcal{C}$ be the canonical twisting morphism from $\mathcal{C}$ to its cobar construction $\Omega_{op}\C$. The goal of this section is to construct a map $mc_{\infty}:Hom_{\mathcal{C}-coalg}(C,D) \rightarrow \mathcal{MC}(H_*(C),H_*(\Omega_{\iota}D))$, from the set of coalgebra maps from $C$ to $D$, to a certain moduli space of Maurer-Cartan elements associated to $C$ and $D$. The map $mc_{\infty}$ has the property that it is a complete invariant of homotopy classes of maps, i.e. two maps $f$ and $g$ are homotopic if and only if $mc_{\infty}(f)=mc_{\infty}(g)$. 

The map $mc_{\infty}$ will be constructed in two steps. The first step is to define a map 
\[
mc:Hom_{\mathcal{C}-coalg}(C,D) \rightarrow Hom_{\K}(H_*(C),H_*( \Omega_{\iota} D)),
\]
which assigns to each coalgebra map a Maurer-Cartan element in the convolution algebra between the homology of $C$ and the homology of the cobar construction of $D$. The cobar construction here is taken with respect to the canonical twisting morphism $\iota:\mathcal{C} \rightarrow \Omega_{op} \mathcal{C}$. The map $mc$ is not homotopy invariant yet, to make it homotopy invariant we have to pass to the moduli space of Maurer-Cartan elements. The second step is therefore to compose this map with the quotient map onto the moduli space of Maurer-Cartan elements.

\begin{remark}
 To talk about gauge equivalence on the space of Maurer-Cartan elements in \\ $Hom_{\K}(H_*(C),H_*(\Omega_{\iota} D))$, it is necessary to have an $SL_{\infty}$-structure, we will define this $SL_{\infty}$-structure in Section \ref{secmappingmodel}.
\end{remark}

The map $mc:Hom_{\mathcal{C}-coalg}(C,D) \rightarrow Hom_{\K}(H_*(C),H_*(\Omega_{\iota} D))$ is not canonical and to construct it we first need to make a couple of choices. First, we pick an $\infty$-quasi-isomorphism $i:H_*(C)\rightarrow C$, where we assume that $H_*(C)$ has a $\widetilde{\mathcal{C}_{\infty}}$-structure coming from the Homotopy Transfer Theorem. Then we pick a strict morphism of $\Omega_{op} \mathcal{C}$-algebras $p:\Omega_{\iota} D \rightarrow H_*(\Omega_{\iota} D)$, where we again equip $H_*(\Omega_{\iota} D)$ with the transferred $\Omega_{op} \mathcal{C}$-structure coming from the Homotopy Transfer Theorem. The morphism $p$ can be chosen in such a way that it is a strict morphism of $\Omega_{\iota} \mathcal{C}$-algebras, and not just an $\infty$-morphism, because of the following proposition.  

\begin{proposition}\label{propeverythingisformal}
Let $p:\Omega_{\iota}D \rightarrow H_*(\Omega_{\iota} D)$ be a choice of projection of chain complexes of $\Omega_{\iota}D$ onto its homology. Then  there exists a strict morphism of $\Omega_{op} \mathcal{C}$-algebras $p':\Omega_{\iota} D \rightarrow H_*(\Omega_{\iota} D)$ which has the linear map $p\vert_D:D \rightarrow H_*(\Omega_{\iota}D)$, defined by the restriction of $p$ to $D$, as its Maurer-Cartan element. 
\end{proposition}


\begin{proof}
To prove the proposition we will first define an $\infty$-morphism $P:\Omega_{\iota}D \rightarrow H_*(\Omega_{\iota}D)$ and then rectify this to a strict morphism with Maurer-Cartan element $p$. To define the $\infty$-morphism $P: \Omega_{\iota}D \rightarrow H_*(\Omega_{\iota } D)$ we will first pick maps $j:H_*(\Omega_{\iota}D) \rightarrow \Omega_{\iota}D$ and $H:\Omega_{\iota}D \rightarrow \Omega_{\iota} D$ such that $p$, $j$ and $H$ form a contraction as in Theorem \ref{thrmhomotopytransfer}. Using Theorem \ref{thrmhomotopytransfer} we now get a $\Omega_{op}\mathcal{C}$-algebra structure on $H_*(\Omega_{\iota}D)$ and an $\infty$-morphism $p_{\infty}:\Omega_{\iota}D\rightarrow H_*( \Omega_{\iota}D)$. 

 To  construct the strict morphism $p':\Omega_{\iota}D \rightarrow H_*(\Omega_{\iota} D)$ we will first rectify the $\infty$-morphism $p_{\infty}$.   According to Theorem 11.4.13  from \cite{LV}, we can rectify $p_{\infty}$ as follows,
 $$\Omega_{\iota}D \xleftarrow{\epsilon_{\Omega_{\iota}D}} \Omega_{\iota} B_{\iota} \Omega_{\iota} D \xrightarrow{\Omega_{\iota} B_{\iota} p_{\infty}} \Omega_{\iota} B_{\iota} H_*(\Omega_{\iota} D) \xrightarrow{\epsilon_{H_*(\Omega_{\iota} D)}} H_*(\Omega_{\iota} D).$$
 The map $\epsilon_A:\Omega_{\iota} B_{\iota} A \rightarrow A$ here is the counit of the bar-cobar adjunction and is given by the projection of $\Omega_{\iota} B_{\iota} A$ onto $A$. We define a strict morphism of $\Omega_{op} \mathcal{C}$-algebras from $\Omega_{\iota}D$ to $H_*(\Omega_{\iota} D)$ by taking an inverse up to homotopy  of the map $\epsilon_{\Omega_{\iota} D}$, which is possible since $\Omega_{\iota}D$ is cofibrant. An explicit homotopy inverse is given by $\Omega_{\iota} \eta_{D}:\Omega_{\iota} D \rightarrow \Omega_{\iota} B_{\iota} \Omega_{\iota} D$, where $\eta_{D}:D \rightarrow B_{\iota} \Omega_{\iota} D$ is the unit of the bar-cobar adjunction and is given by the inclusion of $D$ into $B_{\iota} \Omega_{\iota} D$. So we get a sequence of maps 
 $$\Omega_{\iota}D \xrightarrow{\Omega_{\iota} \eta_{D}} \Omega_{\iota} B_{\iota} \Omega_{\iota} D \xrightarrow{\Omega_{\iota} B_{\iota} p_{\infty}} \Omega_{\iota} B_{\iota} H_*(\Omega_{\iota} D) \xrightarrow{\epsilon_{H_*(\Omega_{\iota} D)}} H_*(\Omega_{\iota} D) $$
 whose composition is a strict morphism of $\Omega_{op} \mathcal{C}$-algebras. The Maurer-Cartan element corresponding to this map is given by the image of $D$ under the composite $\epsilon_{H_*(\Omega_{\iota} D)} \circ \Omega_{\iota} B_{\iota} p_{\infty} \circ \Omega_{\iota} \eta_D$, so we have to chase the image of  $D$ through all these maps. The image of $D$ under $\Omega_{\iota} \eta_D$ is given by the subspace $D \subset \Omega_{\iota} B_{\iota} \Omega_{\iota} D$, the image of $D$ under the map   $\Omega_{\iota} B_{\iota} p_{\infty}$ is contained in the subspace $B_{\iota} H_*(\Omega_{\iota} D)$. Since the map $\epsilon_{H_*(\Omega_{\iota} D)}$ is the projection onto $H_*(\Omega_{\iota} D)$, therefore the image of the composition is given by the image of $D$ onto $H_*(\Omega_{\iota} D)$ which is equal to the map $p$. Therefore the Maurer-Cartan element corresponding to the map  $p'$ is equal to $p$, which proves the proposition.
\end{proof}

\begin{remark}
 Note that in the proof of Proposition \ref{propeverythingisformal} by defining the strict morphism $p':\Omega_{\iota}D \rightarrow H_*(\Omega_{\iota}D)$ we also changed the $B_{op}\Omega_{op}\mathcal{C}$-structure on $H_*(\Omega_{\iota}D)$. If we would first fix an $B_{op}\Omega_{op}\mathcal{C}$-structure on $H_*(\Omega_{\iota}D)$ and then try to construct $p'$, Proposition \ref{propeverythingisformal} would not be true.
\end{remark}

Now that we have fixed the maps $i$ and $p$, we define the map 
$$mc:Hom_{\mathcal{C}-coalg}(C,D) \rightarrow Hom_{\K}(H_*(C),H_*(\Omega_{\iota}D))$$
as follows
$$mc(f)=p \circ \Omega_{\iota} f \circ \Omega_{\iota} i. $$
So we first take the cobar construction of $f$ and then precompose it with $\Omega_{\iota} i$ and compose it with $p$. This map is not yet homotopy invariant, but by Theorem \ref{thrmlinfty}, homotopic maps will have gauge equivalent values. Therefore we define $mc_{\infty}:Hom_{\mathcal{C}-coalg}(C,D)\rightarrow \mathcal{MC}(H_*(C),H_*(\Omega_{\iota} D))$ as the map which sends $f$ to the equivalence class of $mc(f)$ in the moduli space of Maurer-Cartan elements.

\begin{definition}
The algebraic Hopf invariant $mc_{\infty}(f)$ of a map $f:C \rightarrow D$ is defined as the image of the map $mc_{\infty}:Hom_{\mathcal{C}-coalg}(C,D)\rightarrow \mathcal{MC}(H_*(C),H_*(\Omega_{\iota} D))$, where $mc_{\infty}$ is the map that sends $f$ to the equivalence class of $mc(f)$ in the moduli space of Maurer-Cartan elements.
\end{definition}

We will spend the rest of this section to show that $mc_{\infty}$ is  a well defined invariant of the set of homotopy classes of maps. In the next section we will show that it is a complete invariant as well.

\begin{proposition}\label{prophopfinv}
 For every choice of maps $i:H_*(C) \rightarrow C$ and $p:\Omega_{\iota}D \rightarrow H_*(\Omega_{\iota} D)$, the algebraic Hopf invariant is  an invariant of the homotopy class of the map $f$.
\end{proposition}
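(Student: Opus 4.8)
The plan is to show that $mc_\infty(f)$ depends only on the homotopy class of $f$ by working through the definition $mc(f) = p \circ \Omega_\iota f \circ \Omega_\iota i$ one factor at a time, and invoking the second part of Theorem \ref{thrmlinfty} to translate equalities in $\mathcal{MC}$ into homotopies of $\mathcal{P}$-algebra maps. Recall that $mc(f)$, as a Maurer-Cartan element in $Hom_\K(H_*(C), H_*(\Omega_\iota D))$, corresponds under Theorem \ref{thrmlinfty}(1) (applied to the twisting morphism $\iota: \mathcal{C} \to \Omega_{op}\mathcal{C}$, with source the $\mathcal{C}_\infty$-coalgebra $H_*(C)$ and target the $\Omega_{op}\mathcal{C}$-algebra $H_*(\Omega_\iota D)$) to a $\mathcal{P}$-algebra map $\Omega_\iota H_*(C) \to H_*(\Omega_\iota D)$; by Theorem \ref{thrmlinfty}(2), two such Maurer-Cartan elements are gauge equivalent precisely when the associated algebra maps are homotopic. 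So it suffices to prove: if $f \simeq g: C \to D$ as $\mathcal{C}$-coalgebra maps, then the algebra maps corresponding to $mc(f)$ and $mc(g)$ are homotopic in the model category of $\Omega_{op}\mathcal{C}$-algebras.

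First I would observe that a homotopy $H: C \to D \otimes \Omega_1$ (a cylinder in the coalgebra category, or equivalently a path object on the target — whichever is cleaner; since $D$ is fibrant and every coalgebra is cofibrant one has the usual freedom) between $f$ and $g$ induces, by functoriality of the cobar construction $\Omega_\iota$ together with the extension of scalars from Section 5, a homotopy $\Omega_\iota H: \Omega_\iota C \to (\Omega_\iota D) \otimes \Omega_1$ between $\Omega_\iota f$ and $\Omega_\iota g$ as maps of $\Omega_{op}\mathcal{C}$-algebras — here I use that $\Omega_\iota(D \otimes \Omega_1) \cong (\Omega_\iota D) \otimes \Omega_1$ since $\Omega_1$ is a commutative dg algebra and the cobar construction is built from free algebra functors. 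Precomposing with the fixed map $\Omega_\iota i: \Omega_\iota H_*(C) \to \Omega_\iota C$ and postcomposing with the fixed strict quasi-isomorphism $p: \Omega_\iota D \to H_*(\Omega_\iota D)$ of Lemma \ref{lemeverythingisformal} (extended levelwise, $p \otimes \mathrm{id}_{\Omega_1}$), we obtain a homotopy between $p \circ \Omega_\iota f \circ \Omega_\iota i$ and $p \circ \Omega_\iota g \circ \Omega_\iota i$, i.e. between the algebra maps $\Omega_\iota H_*(C) \to H_*(\Omega_\iota D)$ adjoint to $mc(f)$ and $mc(g)$. Translating back through Theorem \ref{thrmlinfty}(2) gives that $mc(f)$ and $mc(g)$ are gauge equivalent, hence $mc_\infty(f) = mc_\infty(g)$ in $\mathcal{MC}(H_*(C), H_*(\Omega_\iota D))$.

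The main subtlety — and the step I would be most careful about — is the bookkeeping at the Maurer-Cartan level: $mc(f)$ is defined as a composite of an $\infty$-morphism ($\Omega_\iota i$, coming from the $\infty$-quasi-isomorphism $i: H_*(C)\to C$) with strict morphisms, so one must check that composing a $\mathcal{C}$-coalgebra homotopy with these fixed maps actually lands one in the correct cylinder/path object for the convolution $L_\infty$-algebra $Hom_\K(H_*(C), H_*(\Omega_\iota D))$, and that the "restriction to $t=0,1$" really recovers $mc(f)$ and $mc(g)$ on the nose. This is exactly the mechanism in the proof of Theorem \ref{thrmlinfty}(2), where a homotopy $\Omega_\tau C \to A \otimes \Omega_1$ is identified with a Maurer-Cartan element of $Hom_\K(C, A\otimes\Omega_1)$; here one applies that identification with $\tau = \iota$, source $H_*(C)$, and target $H_*(\Omega_\iota D)$, and checks naturality of the whole package in the target (which Theorem \ref{thrmlinfty} provides). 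Everything else is functoriality of $\Omega_\iota$ and the compatibility of the bar/cobar constructions with $-\otimes\Omega_1$, which are routine.
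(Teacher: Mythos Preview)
Your overall strategy matches the paper's exactly: show that $\Omega_\iota f$ and $\Omega_\iota g$ are homotopic as $\Omega_{op}\mathcal{C}$-algebra maps, then pre/post compose with the fixed maps $\Omega_\iota i$ and $p$, and finally invoke Theorem~\ref{thrmlinfty}(2). The endgame is fine.

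The gap is in how you transport the homotopy across $\Omega_\iota$. Two problems:
\begin{enumerate}
\item The object $D\otimes\Omega_1$ is not a $\mathcal{C}$-coalgebra in any natural way (tensoring a coalgebra with a commutative \emph{algebra} does not yield a coalgebra), so writing a coalgebra homotopy as $H:C\to D\otimes\Omega_1$ is not legitimate. The path object $(-)\otimes\Omega_1$ lives in the \emph{algebra} category; in the coalgebra model structure of \cite{DCH1} one has no such explicit description, and since $D$ need not be fibrant you should be working with a cylinder on $C$ anyway.
\item Even granting the object, the claimed isomorphism $\Omega_\iota(D\otimes\Omega_1)\cong(\Omega_\iota D)\otimes\Omega_1$ is false: as graded modules the left side is $\bigoplus_n \mathcal{P}(n)\otimes_{S_n}(D\otimes\Omega_1)^{\otimes n}$, which involves $\Omega_1^{\otimes n}$ and is strictly larger than $\mathcal{P}(D)\otimes\Omega_1$. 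There is only a map (multiply the $\Omega_1$ factors), not an isomorphism, and its compatibility with the cobar differential is not automatic.
\end{enumerate}

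The paper sidesteps both issues with an abstract model-categorical lemma (Lemma~\ref{lem3.15}): a left Quillen functor sends a good cylinder object on a cofibrant $C$ to a cylinder object on $F(C)$, because it preserves coproducts, cofibrations, and weak equivalences between cofibrants. Since every $\mathcal{C}$-coalgebra is cofibrant and $\Omega_\iota$ is left Quillen (Theorem~\ref{thrmquilleneqcoalg}), one gets $\Omega_\iota f\simeq\Omega_\iota g$ with no explicit formula for the homotopy. That is the missing ingredient in your argument; once you replace your concrete $(-)\otimes\Omega_1$ manipulation by this lemma, the rest of your write-up goes through verbatim.
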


Before we prove the proposition we will first prove the following lemma. This lemma should be well known, but because we could not find a reference we decided to include its proof for the sake of completeness.

\begin{lemma}\label{lem3.15}
 Let $f,g:C \rightarrow D$ be left homotopic maps in a model category $\mathbf{C}$, assume that the objects $C$ and $D$ are cofibrant and let $F:\mathbf{C} \rightarrow \mathbf{D}$ be a left Quillen functor from $\mathbf{C}$ to a model category $\mathbf{D}$. Then $F$ preserves left homotopies, i.e. $F(f)$ is left homotopic to $F(g)$.
\end{lemma}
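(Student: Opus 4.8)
The plan is to unwind the definition of left homotopy and push the cylinder object through the left Quillen functor $F$. Recall that $f,g:C\to D$ being left homotopic means there is a cylinder object $\mathrm{Cyl}(C)$ for $C$, i.e.\ a factorization of the fold map $C\sqcup C\xrightarrow{i_0+i_1}\mathrm{Cyl}(C)\xrightarrow{\sim} C$ with the first map a cofibration and the second a weak equivalence, together with a map $H:\mathrm{Cyl}(C)\to D$ with $H\circ i_0=f$ and $H\circ i_1=g$. The idea is that $F(\mathrm{Cyl}(C))$ should serve as a cylinder object for $F(C)$, and $F(H)$ as the homotopy between $F(f)$ and $F(g)$.

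First I would observe that $F$, being a left adjoint, preserves coproducts, so $F(C\sqcup C)\cong F(C)\sqcup F(C)$ and under this identification $F(i_0+i_1)$ is the fold map for $F(C)$. Next, since $C$ is cofibrant, the map $C\sqcup C\to\mathrm{Cyl}(C)$ is a cofibration between cofibrant objects (the two inclusions $C\to\mathrm{Cyl}(C)$ are then acyclic cofibrations, and $C\sqcup C$ is cofibrant as a coproduct of cofibrant objects); a left Quillen functor preserves cofibrations, so $F(C\sqcup C)\to F(\mathrm{Cyl}(C))$ is a cofibration. For the weak equivalence $\mathrm{Cyl}(C)\xrightarrow{\sim}C$: this is a weak equivalence between cofibrant objects, so by Ken Brown's Lemma (Lemma \ref{lemkbrown} applied to the left Quillen functor $F$, which carries acyclic cofibrations to weak equivalences) $F$ sends it to a weak equivalence $F(\mathrm{Cyl}(C))\xrightarrow{\sim}F(C)$. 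Hence $F(\mathrm{Cyl}(C))$, with the maps just described, is a cylinder object for $F(C)$.

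Finally, applying $F$ to $H$ gives $F(H):F(\mathrm{Cyl}(C))\to F(D)$ with $F(H)\circ F(i_0)=F(f)$ and $F(H)\circ F(i_1)=F(g)$, which exhibits $F(f)$ and $F(g)$ as left homotopic via this cylinder object. This completes the argument.

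The main subtlety — really the only nontrivial point — is justifying that the weak equivalence part of the cylinder object is preserved; this is exactly where one needs $C$ (hence $\mathrm{Cyl}(C)$ and $C\sqcup C$) to be cofibrant so that Ken Brown's Lemma applies, since a left Quillen functor need not preserve arbitrary weak equivalences. Everything else is a formal consequence of $F$ being a left adjoint that preserves cofibrations and coproducts. I would also note in passing that one does not even need $D$ to be cofibrant for this direction; it is included in the hypotheses presumably because the lemma is applied in a context where it holds anyway.
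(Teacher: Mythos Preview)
Your proof is correct and follows essentially the same approach as the paper: both arguments show that $F(\mathrm{Cyl}(C))$ is a cylinder object for $F(C)$ by using that $F$ preserves coproducts (as a left adjoint), cofibrations, and weak equivalences between cofibrant objects, and then take $F(H)$ as the required homotopy. Your version is slightly more explicit in invoking Ken Brown's Lemma for the weak-equivalence step and in noting that the cofibrancy of $D$ is not actually used, but the substance is the same.
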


\begin{proof}
 Let $Cyl(C)$ be a good cylinder object for $C$, which gives a homotopy between $f$ and $g$. This is  an object $Cyl(C)$, such that the first map is a cofibration and the second map a weak equivalence $C\sqcup C  \hookrightarrow Cyl(C) \xrightarrow{~} C$, where $C \sqcup C$ is the coproduct. Since $f$ and $g$ are homotopic, this means that there is a map $H:Cyl(C) \rightarrow D$, such that $H$ restricted to the first factor is  $f$ and the restriction of $H$  to the second factor is $g$. So what we would like to prove is that $F(Cyl(C))$ is a cylinder object for $F(C)$, because then the map $F(H):F(Cyl(C)) \rightarrow F(D)$ is a homotopy between $F(f)$ and $F(g)$. 
 
 To prove that $F(Cyl(C))$ is a cylinder object for $F(C)$ we first observe that since $F$ is a left Quillen functor it preserves weak equivalences between cofibrant objects and because it is a left adjoint it preserves coproducts. Since we assumed that $C$ is cofibrant  this implies that $C\sqcup C$ and $Cyl(C)$ are also cofibrant. So if we apply the functor $F$ then we get $F(C \sqcup  C) \hookrightarrow F(Cyl(C)) \xrightarrow{~} F(C)$ which is equal to $F(C) \sqcup F(C) \hookrightarrow F(Cyl(C)) \xrightarrow{~} F(C)$. Therefore $F(Cyl(C))$ is a cylinder object for $F(C)$ and $F(H)$ a homotopy between $F(f)$ and $F(g)$. 
\end{proof}

The homotopy invariance of the Hopf invariants follows from this lemma if we apply it to the cobar construction.

\begin{proof}[Proof of Proposition \ref{prophopfinv}]
The homotopy invariance of the algebraic Hopf invariants follows from Lemma \ref{lem3.15} and Theorem \ref{thrmlinfty}. More precisely if $f:C \rightarrow D$ and $g:C \rightarrow D$ are homotopic maps then $\Omega_{\iota}f$ and $\Omega_{\iota}g$ are homotopic because of Lemma \ref{lem3.15}. The lemma applies since $\Omega_{\iota}$ is a left Quillen functor and every coalgebra is cofibrant. Since $\Omega_{\iota}f$ and $\Omega_{\iota}g$ are homotopic, so are $p \circ \Omega_{\iota} f \circ i$ and $p \circ \Omega_{\iota} g \circ g$. Then because of Theorem \ref{thrmlinfty} the maps $p \circ \Omega_{\iota}f \circ i$ and $p \circ \Omega_{\iota}g \circ i$ are homotopic if and only if the corresponding Maurer-Cartan elements are gauge equivalent. Therefore the algebraic Hopf invariants are an invariant of the homotopy class of the map $f$. 

\end{proof}

\begin{remark}
The fact that we need to make a choice for the maps $i$ and $p$ is already visible in the case of the classical Hopf invariant. The map $i$ can be seen as the choice of an orientation of the fundamental class of $S^{4n-1}$.  If we do not pick an orientation the  classical Hopf invariant is only well defined up to a sign. This choice of  orientation is exactly what the map $i$ does in our construction, it fixes a set of representatives for the homology for which we compute the corresponding Maurer-Cartan element.
 
 Since we need to make a choice for $i$ and $p$, we will assume for simplicity that in the rest of this paper the maps $i$ and $p$ are fixed and we therefore omit them from the notation.
\end{remark}

\section{Completeness of the algebraic Hopf invariants}\label{seccompleteness}

In this section we will prove the main theorem about the algebraic Hopf invariants, which is that they form a complete invariant of the set of homotopy classes of maps $[C,D]$,  between two $\mathcal{C}$-coalgebras $C$ and $D$.

\begin{theorem}\label{thrmcompletehopfinvariants}
Let $f:C \rightarrow D$ and $g:C \rightarrow D$ be two maps of $\mathcal{C} $ coalgebras, then $f$ and $g$ are homotopic in the model category of $\mathcal{C}$-coalgebras if and only if they have the same algebraic Hopf invariant. 
\end{theorem}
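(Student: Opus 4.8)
The plan is to reduce the completeness statement to the already-established correspondence between gauge equivalence of Maurer--Cartan elements in the convolution $L_\infty$-algebra and homotopy of the associated $\mathcal{P}$-algebra maps (Theorem~\ref{thrmlinfty}(2)), together with the homotopy-invariance of the cobar functor (Lemma~\ref{lem3.15}). One direction, namely that homotopic maps have the same algebraic Hopf invariant, is exactly Proposition~\ref{prophopfinv}, so the real content is the converse: if $mc_\infty(f) = mc_\infty(g)$, then $f$ and $g$ are homotopic as $\mathcal{C}$-coalgebra maps.

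First I would unwind the definition $mc(f) = p \circ \Omega_\iota f \circ \Omega_\iota i$. The hypothesis $mc_\infty(f) = mc_\infty(g)$ means that $mc(f)$ and $mc(g)$ are gauge equivalent Maurer--Cartan elements in $Hom_\K(H_*(C), H_*(\Omega_\iota D))$. Viewing these Maurer--Cartan elements as $\Omega_{op}\mathcal{C}$-algebra maps out of $\Omega_\iota H_*(C)$ via the bijection of the representability theorem, Theorem~\ref{thrmlinfty}(2) tells us that gauge equivalence is equivalent to homotopy (in the model category of $\Omega_{op}\mathcal{C}$-algebras) of the corresponding maps $\Omega_\iota H_*(C) \rightarrow H_*(\Omega_\iota D)$. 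So the chain of quasi-isomorphisms $p \circ \Omega_\iota f \circ \Omega_\iota i$ and $p \circ \Omega_\iota g \circ \Omega_\iota i$ are left-homotopic maps of $\mathcal{P}$-algebras, where here $\mathcal{P} = \Omega_{op}\mathcal{C}$.

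Next I would promote this to a statement about $\Omega_\iota f$ and $\Omega_\iota g$ themselves. The maps $\Omega_\iota i : \Omega_\iota H_*(C) \rightarrow \Omega_\iota C$ and $p : \Omega_\iota D \rightarrow H_*(\Omega_\iota D)$ are quasi-isomorphisms between cofibrant $\Omega_{op}\mathcal{C}$-algebras (by Lemma~\ref{lemeverythingisformal} and Proposition~\ref{propfibrantobjects}, since $\Omega_\iota$ of any coalgebra is quasi-free cofibrant), hence homotopy equivalences in the model category of $\mathcal{P}$-algebras, with homotopy inverses $j$ and $s$ say. Pre- and post-composing the homotopy between $p \circ \Omega_\iota f \circ \Omega_\iota i$ and $p \circ \Omega_\iota g \circ \Omega_\iota i$ with $j$ and $s$, and using that homotopy is a congruence with respect to composition between cofibrant objects, I obtain that $\Omega_\iota f$ and $\Omega_\iota g$ are left-homotopic maps $\Omega_\iota C \rightarrow \Omega_\iota D$. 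Finally, since $\Omega_\iota$ is the left adjoint of a Quillen equivalence between conilpotent $\mathcal{C}$-coalgebras and $\Omega_{op}\mathcal{C}$-algebras (by Theorem~\ref{thrmquilleneqcoalg}, as $\iota : \mathcal{C} \rightarrow \Omega_{op}\mathcal{C}$ is Koszul, being the universal twisting morphism), and since every $\mathcal{C}$-coalgebra is cofibrant, $\Omega_\iota$ reflects the homotopy relation between maps out of cofibrant objects: $\Omega_\iota f \sim \Omega_\iota g$ forces $f \sim g$. This gives the converse and completes the proof.

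The step I expect to be the main obstacle is the last one --- carefully justifying that a left Quillen equivalence whose right adjoint detects weak equivalences (or, more precisely, the fact that $\Omega_\iota$ followed by a fibrant replacement composed with $B_\iota$ is naturally weakly equivalent to the identity via the unit $\nu_\iota$) lets us descend the homotopy relation from $\Omega_{op}\mathcal{C}$-algebras back to $\mathcal{C}$-coalgebras. The clean way is to apply $B_\iota$ to the homotopy $\Omega_\iota f \sim \Omega_\iota g$, obtaining $B_\iota \Omega_\iota f \sim B_\iota \Omega_\iota g$ via Lemma~\ref{lem3.15} (now with $F = B_\iota$, which is a right Quillen functor, so one should dualize Lemma~\ref{lem3.15} to right homotopies and fibrant objects, or equivalently invoke that $B_\iota$ preserves path objects), then precompose with the unit weak equivalence $\nu_\iota : C \rightarrow B_\iota \Omega_\iota C$ and use the naturality square to identify $B_\iota\Omega_\iota f \circ \nu_\iota$ with $\nu_\iota \circ f$ up to homotopy; since $\nu_\iota$ is a weak equivalence between a cofibrant source and a fibrant target it is a homotopy equivalence, and cancelling it yields $f \sim g$. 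Making the homotopy bookkeeping here precise --- in particular keeping track of cofibrancy/fibrancy hypotheses so that the congruence and cancellation properties of the homotopy relation apply --- is the delicate part, but it is entirely formal model-category manipulation given the inputs assembled above.
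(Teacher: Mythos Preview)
Your strategy is essentially the paper's: both arguments factor the comparison through
\[
[C,D]\;\longleftrightarrow\;[\Omega_\iota C,\Omega_\iota D]\;\longleftrightarrow\;[\Omega_\iota C,H_*(\Omega_\iota D)]\;\longleftrightarrow\;[\Omega_\iota H_*(C),H_*(\Omega_\iota D)]\;\longleftrightarrow\;\mathcal{MC}(H_*(C),H_*(\Omega_\iota D)),
\]
using the Quillen equivalence $\Omega_\iota\dashv B_\iota$, the quasi-isomorphisms $p$ and $\Omega_\iota i$, and Theorem~\ref{thrmlinfty}(2). The paper packages this as a chain of bijections on homotopy classes (so both implications fall out at once), while you run the converse direction by cancelling $p$ and $\Omega_\iota i$ from a homotopy; the content is the same.

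There is one genuine technical slip in your cancellation step. You assert that $p:\Omega_\iota D\to H_*(\Omega_\iota D)$ is a weak equivalence \emph{between cofibrant} $\Omega_{op}\mathcal{C}$-algebras and therefore admits a homotopy inverse $s$. But $H_*(\Omega_\iota D)$, with its transferred $\Omega_{op}\mathcal{C}$-structure, is not known to be cofibrant (nothing in Lemma~\ref{lemeverythingisformal} or Proposition~\ref{propfibrantobjects} says this), so Whitehead's theorem does not directly give you $s$. The paper avoids this by not seeking a homotopy inverse at all: since $p$ is a \emph{surjective} quasi-isomorphism it is an acyclic fibration, and Lemma~\ref{lemacyclicfibrations} then says that $p_*:[\Omega_\iota C,\Omega_\iota D]\to[\Omega_\iota C,H_*(\Omega_\iota D)]$ is a bijection for the cofibrant source $\Omega_\iota C$. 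Likewise, for $\Omega_\iota i$ the paper invokes Ken Brown's Lemma (Lemma~\ref{lemkbrown}) applied to the functor $[-,H_*(\Omega_\iota D)]$ rather than producing an explicit inverse. If you replace your ``homotopy inverse'' argument by these two lemmas, your proof goes through and coincides with the paper's.
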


\begin{proof}
To prove the theorem we want to show that the algebraic Hopf invariant map $mc_{\infty}$ induces a bijection between the set of homotopy classes of maps $[C,D]$ and the moduli space of Maurer-Cartan elements $\mathcal{MC}(H_*(C),H_*(\Omega_{\iota}D))$. Since two maps are homotopic if and only if they belong to the same homotopy class of maps, this bijection implies that two maps are homotopic if and only if they have the same algebraic Hopf invariant. 

To show that there is a bijection between $[C,D]$ and  $\mathcal{MC}(H_*(C),H_*(\Omega_{\iota}D))$ we will show that we have the  following sequence of bijections.
$$[C,D] \cong [\Omega_{\iota}C,\Omega_{\iota} D] \cong  [\Omega_{\iota} C, H_*(\Omega_{\iota} D)] \cong[\Omega_{\iota} H_*(C), H_*(\Omega_{\iota} D)] \cong \mathcal{MC}(H_*(C),H_*(\Omega_{\iota}D)].$$

The first bijection between $[C,D]$ and $[\Omega_{\iota}C,\Omega_{\iota}D]$ follows from the fact that $\Omega_{\iota}$ is the left Quillen functor in a Quillen equivalence and therefore induces a bijection on the level of homotopy categories.

The second bijection between  $[\Omega_{\iota}C,\Omega_{\iota} D]$ and $ [\Omega_{\iota} C, H_*(\Omega_{\iota} D)]$ is given by the map induced by composition with the map $p:\Omega_{\iota} D\rightarrow H_*(\Omega_{\iota} D)$ from Lemma \ref{propeverythingisformal}. Since the map $p$ is a surjective quasi-isomorphism, it is an acyclic fibration, Lemma \ref{lemacyclicfibrations} then implies that the second map is indeed a bijection.

To show that we have the third bijection between $[\Omega_{\iota} C, H_*(\Omega_{\iota} D)]$ and $[\Omega_{\iota} H_*(C), H_*(\Omega_{\iota} D)]$, we want to use Lemma \ref{lemkbrown}. We apply this lemma to the functor $[-,H_*(\Omega_{\iota}D)]:\Omega_{op}\mathcal{C} -algebras \rightarrow Sets$ which sends an algebra $A$ to the set of homotopy classes of maps between $A$ and $H_*(\Omega_{\iota}D)$. The category of sets is here equipped with the trivial model structure in which the weak equivalences are given by bijections. This functor carries acyclic cofibrations to weak equivalences because of a dual version of Lemma \ref{lemacyclicfibrations}, therefore K. Brown's Lemma applies. Since both $\Omega_{\iota} C$ and $ \Omega_{\iota} H_*(C)$ are cofibrant and the map $\Omega_{\iota} i : \Omega_{\iota} H_*(C) \rightarrow \Omega_{\iota} C$ is a weak equivalence, the map $[\Omega_{\iota} C, H_*(\Omega_{\iota} D)] \rightarrow [\Omega_{\iota} H_*(C), H_*(\Omega_{\iota} D)]$ is a weak equivalence in the category of sets and therefore a bijection.

The last bijection between $[\Omega_{\iota} H_*(C), H_*(\Omega_{\iota} D)]$ and   $\mathcal{MC}(H_*(C),H_*(\Omega_{\iota}D))$ follows from Theorem \ref{thrmlinfty} and the fact that the set of twisting morphisms is represented by the cobar construction. More precisely, the set of algebra morphisms $Hom_{\Omega_{op}\mathcal{C}-alg}(\Omega_{\iota} H_*(C), H_*(\Omega_{\iota} D))$ is in bijection with the Maurer-Cartan elements in the convolution $SL_{\infty}$-algebra $Hom_{\K}(H_*(C), H_*(\Omega_{\iota} D))$. Because of Theorem \ref{thrmlinfty} the homotopy equivalence relation on $Hom_{\Omega_{op}\mathcal{C}-alg}(\Omega_{\iota} H_*(C), H_*(\Omega_{\iota} D))$ is equivalent to the gauge equivalence relation on  $Hom_{\K}(H_*(C), H_*(\Omega_{\iota} D))$. Therefore there is a bijection between $[\Omega_{\iota} H_*(C), H_*(\Omega_{\iota} D)]$ and  $\mathcal{MC}((H_*(C), H_*(\Omega_{\iota} D))$.

Putting all these bijections together we get a bijection between  $\mathcal{MC}(H_*(C),H_*(\Omega_{\iota} D))$ and $[C,D]$. The maps $f$ and $g$ are therefore homotopic if and only if $mc_{\infty}(f)=mc_{\infty}(g)$, which proves the theorem.
 
\end{proof}

\section{Models for mapping spaces}\label{secmappingmodel}

In this section we use to the $SL_\infty$-structure from Theorem \ref{thrmlinfty} to equip the chain complex \\ $Hom_{\K}(H_*(C),H_*( \Omega_{\iota} D))$ with the structure of an $SL_{\infty}$-algebra. This is necessary because we need a notion of gauge equivalence between the Maurer-Cartan elements in $Hom_{\K}(H_*(C),H_*( \Omega_{\iota} D))$ to pass to the moduli space of Maurer-Cartan elements. We also explain that when we specialize ourselves to the cooperad $C_{\infty}$, we can use this $SL_{\infty}$-algebra to construct rational models for mapping spaces.

To equip $Hom_{\K}(H_*(C),H_*( \Omega_{\iota} D))$ with an $SL_\infty$-structure, we need a twisting morphism from $\widetilde{\C_\infty}= B_{op} \Omega_{op} \C $ to $ \Omega_{op}\C$. A canonical choice for this twisting morphism is the canonical twisting morphism $\pi:B_{op} \Omega_{op} \C \rightarrow \Omega_{op}\C$. Using this twisting morphism we can equip $Hom_{\K}(H_*(C),H_*( \Omega_{\iota} D))$ with the correct $SL_\infty$-structure, this also allows us to construct the moduli space of Maurer-Cartan elements.

With some small modifications we can also use this $SL_\infty$-structure to construct rational models for mapping spaces. Let $X$ and $Y$ be simply-connected spaces of finite $\Q$-type, let $A$ be a non-unital CDGA model for $X$ and let $L$ be an $SL_{\infty}$-model for $Y$. In \cite{Berg1} Theorem 6.3, Berglund constructed an $SL_{\infty}$-model for the based mapping space $Map_*(X,Y)$. This model is given by $A \hat{\otimes} L$, the completed tensor product of $A$ and $L$ with a certain $SL_\infty$-structure. 

Although this construction works in many interesting cases, it has the obvious disadvantage that it assumes that the source $A$ has to be a CDGA instead of a more general $C_{\infty}$-algebra. In this section we state some of the results of \cite{RNW1}, which show that the $SL_{\infty}$-structure from Section \ref{seclinfty} can be used to generalize Berglund's Theorem. This generalization extends his result by constructing a rational model for the mapping space from a $C_{\infty}$-coalgebra and an $SL_{\infty}$-algebra. As a corollary we give an alternative proof for  Theorem 3.2 of \cite{BG1}. In this theorem Buijs and Guti\'errez show that $\tilde{H}^*(X)\otimes \pi_*(Y)$ can be equipped with an $SL_{\infty}$-structure, such that it becomes a rational model for the mapping space. From now on it will be necessary to have some restrictions on the $SL_{\infty}$-algebras we are considering. This is done in the following convention.

\begin{convention}
From now on we will assume that all $SL_{\infty}$-algebras are degree-wise nilpotent (see Definition \ref{defnilpotence}). Under some mild assumptions on $X$ and $Y$, the models for the mapping space $Map_*(X,Y)$ will always be degree-wise nilpotent. An example of such assumptions would be to assume that is $X$ a finite 1-reduced CW-complex and $Y$ a simply-connected rational space of finite $\Q$-type. Therefore we will from now on tacitly assume that all the $SL_{\infty}$-algebras we encounter are degree-wise nilpotent.
\end{convention}


%
%

%
%

%
 We will combine Berglund's results and the $SL_{\infty}$-structure from section \ref{seclinfty}, to construct a model for the mapping space as follows. Let $C$ be a finite-type $C_{\infty}$-coalgebra model for a simply-connected  CW-complex $X$ of finite $\Q$-type and let $L$ be a rational $SL_{\infty}$-model of finite type for a space $Y$ of finite $\Q$-type, denote by $Y_{\Q}$ the rationalization of $Y$. We would like to apply Theorem \ref{thrmlinfty} to the convolution algebra $Hom_{\K}(C,A)$, but to apply this theorem we first need a twisting morphism $\tau:C_{\infty} \rightarrow SL_{\infty}$.  In Theorem 6.5.10 of \cite{LV}, it is shown that the set of twisting morphisms in $Hom_{\K}(C_{\infty},SL_{\infty})$ is represented by the set of cooperad maps $Hom_{coop}(C_{\infty},B_{op} SL_{\infty})$. Therefore to construct the twisting morphism $\tau$, we pick  a quasi-isomorphism $\phi:C_{\infty} \rightarrow B_{op} \Omega_{op} \mathcal{COCOM}$ from the bar-cobar resolution of the cocommutative cooperad $\mathcal{COCOM}$ to $C_{\infty}$. Since $C_{\infty}$ and $B_{op} \Omega_{op} \mathcal{COCOM}$ are both fibrant and cofibrant this is always possible. The twisting morphism $\tau$ is then defined as the composition of $\phi$ with the universal twisting morphism $\pi:B_{op} SL_{\infty} \rightarrow SL_{\infty}$, which is given by the projection onto $SL_{\infty}$. The twisting morphism $\tau$ is then defined by  $\tau := \pi \circ \phi$. The morphism $\phi$ also allows us to view every $C_{\infty}$-coalgebra as a $B_{op} \Omega_{op} \mathcal{COCOM}$-coalgebra. Because this does not change the underlying chain complex and therefore not the homology, it will be convenient to view every $C_{\infty}$-coalgebra as a $B_{op}\Omega_{op} \mathcal{COCOM}$-coalgebra. Therefore we will from now view every $C_{\infty}$-coalgebra as an $B_{op} \Omega_{op} \mathcal{COCOM}$-coalgebra, unless stated otherwise, this has the advantage that we can work with the twisting morphism $\pi$ which is easier than $\tau$. 
 
\begin{remark}
There is a natural choice for the quasi-isomorphism $\phi$, which is given by the bar construction on the morphism $\mathcal{LIE}^{\vee}\rightarrow SL_{\infty}^{\vee}$. For our results it does not matter which quasi-isomorphism we are using, we will therefore not pick a specific twisting morphism.
\end{remark}

Before we define the model for the mapping space, we first need one more restriction on the $SL_\infty$-algebras we are working with.

\begin{definition}
An $SL_{\infty}$-algebra $L$ is called locally finite if the filtration quotients $L / \Gamma_nL$ are finite dimensional, where $\Gamma_nL$ is the lower central series of $L$ (see Definition \ref{defnilpotence}).
\end{definition}

\begin{theorem}[\cite{RNW1}, Theorem  9.19]\label{thrmmappingmodel}
  Let $X$  be a simply-connected CW-complex of finite $\Q$-type and let $Y$ be a simply-connected space of finite $\Q$-type and $Y_{\Q}$ its $\Q$-localization. Let $C$ be a $C_{\infty}$-coalgebra model of finite type for $X$ and let $L$ be  simply-connected locally finite $SL_{\infty}$-model for $Y_{\Q}$. The convolution $SL_{\infty}$-algebra $Hom_{\K}(C,L)$ is a model for the mapping space $Map_*(X,Y_{\Q})$, i.e. there is a homotopy equivalence between 
 $$Map_*(X,Y_{\Q}) \simeq MC_{\bullet}(Hom_{\K}(C,L)).$$
\end{theorem}

The idea of the proof is as follows. We would like to compare the $SL_{\infty}$-algebra $Hom_{\K}(C,L)$ to Berglund's model of the mapping space. To do this we first dualize $C$ to $C^\vee$, as is shown in Theorem 9.14 of \cite{RNW1}, the dual of a coalgebra model is an algebra model. Unfortunately Berglund's model does not apply yet, since $C^{\vee}$ is not commutative, but only commutative up to homotopy. We therefore need to replace $C^\vee$ be a commutative algebra, this is done by using the rectification from Section 11.4.3 of \cite{LV}. More precisely, we can find a commutative algebra $R(C^\vee)$  and an $\infty$-quasi-isomorphism $\beta:R(C^\vee) \rightarrow C^\vee$. According to Berglund's Theorem, the space $R(C^\vee) \otimes L$ is now an  $SL_\infty$-model for $Map_*(X,Y_\Q)$. It is shown in Theorem 8.10 of \cite{RNW1}, that under some conditions, $\infty$-quasi-morphisms induce homotopy equivalences between Maurer-Cartan simplicial sets. This implies that we have a homotopy equivalence between $Hom_\K(C,L)$ and $Map_*(X,Y_\Q)$. For more details see the proof of Theorem 9.19 of \cite{RNW1}.

As a corollary of Theorem \ref{thrmmappingmodel} we find a new proof for  Theorem 3.2 in the paper \cite{BG1} by Buijs and Guti\'errez. This theorem states that  $Hom_{\K}(\tilde{H}_*(X),\pi_*(Y))$ can be equipped with an $SL_{\infty}$-structure, such that it becomes a rational model for the mapping space $Map_*(X,Y_{\Q})$.

\begin{corollary}
 Let $X$ be a finite 1-reduced CW-complex and $Y_{\Q}$ a simply-connected rational space of finite $\Q$-type, then the space  $Hom_{\K}(\tilde{H}_*(X),\pi_*(Y))$ can be equipped with an $SL_{\infty}$-structure such that 
 $$MC_{\bullet}(Hom_{\K}(\tilde{H}_*(X),\pi_*(Y)))\simeq Map_*(X,Y_{\Q}).$$
\end{corollary}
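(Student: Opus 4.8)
The plan is to deduce this corollary directly from Theorem~\ref{thrmmappingmodel} by feeding it the ``small'' models carried by homology and homotopy, namely $C=\tilde H_*(X)$ with a transferred $C_\infty$-coalgebra structure and $L=\pi_*(Y)$ with a transferred $L_\infty$-structure.

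First I would equip $\tilde H_*(X)$ with a $C_\infty$-coalgebra model structure. Take any $C_\infty$-coalgebra model $C'$ for $X$; for instance the cocommutative coalgebra $\mathcal{C}\lambda(X)$ of Theorem~\ref{thrmQuillenstheorem} is such a model. Over a field of characteristic zero we may choose a contraction of the underlying complex of $C'$ onto its homology, and the homotopy transfer theorem for $C_\infty$-coalgebras then produces a $C_\infty$-coalgebra structure on $H_*(C')\cong\tilde H_*(X)$ together with an $\infty$-quasi-isomorphism to $C'$; composing zig-zags shows $\tilde H_*(X)$ is a $C_\infty$-coalgebra model for $X$. Since $X$ is a finite CW-complex, $\tilde H_*(X;\Q)$ is finite dimensional, so $C:=\tilde H_*(X)$ is a \emph{finite dimensional} $C_\infty$-coalgebra model for $X$.

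Next I would equip $\pi_*(Y)$ with an $L_\infty$-model structure. Take a Quillen dg Lie algebra model $\mathfrak g$ for $Y_\Q$; its homology is, up to the (de)suspension forced by our degree $-1$ grading convention for $L_\infty$-operations, the rational homotopy Lie algebra $\pi_*(Y)$. Applying the homotopy transfer theorem for $L_\infty$-algebras to a contraction of $\mathfrak g$ onto $H_*(\mathfrak g)$ equips $\pi_*(Y)$ with an $L_\infty$-structure and an $\infty$-quasi-isomorphism to $\mathfrak g$. Because $Y$ is simply connected of finite $\Q$-type, $\pi_*(Y)$ is degreewise finite dimensional (hence $L$ is of finite $\Q$-type) and is concentrated in positive degrees, hence degreewise nilpotent; and since $MC_\bullet$ carries $\infty$-quasi-isomorphisms of degreewise nilpotent $L_\infty$-algebras to weak equivalences, $MC_\bullet(\pi_*(Y))\simeq MC_\bullet(\mathfrak g)\simeq Y_\Q$, so $L:=\pi_*(Y)$ is an $L_\infty$-model for $Y_\Q$ (see \cite{Getz1}).

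With these choices the hypotheses of Theorem~\ref{thrmmappingmodel} are exactly met, so it endows $Hom_\K(C,L)=Hom_\K(\tilde H_*(X),\pi_*(Y))$ with an explicit $L_\infty$-structure and yields a homotopy equivalence $MC_\bullet(Hom_\K(\tilde H_*(X),\pi_*(Y)))\simeq Map_*(X,Y_\Q)$; degreewise nilpotence of this model is guaranteed by the convention recorded just before Theorem~\ref{thrmmodelmapping} and by the completeness argument inside the proof of Theorem~\ref{thrmmappingmodel}. I expect the only genuine work to be bookkeeping: pinning down the grading and (de)suspension conventions so that the transferred structure really lives on $\pi_*(Y)$ and not on a shift of it, and checking that $C'$ and $\mathfrak g$ can be chosen of finite type so that $C$ and $L$ inherit the required finiteness. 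Once that is settled the corollary is immediate from Theorem~\ref{thrmmappingmodel}.
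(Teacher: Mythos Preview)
Your proposal is correct and follows essentially the same approach as the paper: equip $\tilde H_*(X)$ with a transferred $C_\infty$-coalgebra structure, equip $\pi_*(Y)$ with a transferred $L_\infty$-structure, and then apply Theorem~\ref{thrmmappingmodel}. The paper's proof is terser, simply asserting the existence of these minimal models (relying on the homotopy transfer theorem already discussed) and invoking the theorem, whereas you spell out the transfer and the finiteness and nilpotence checks; but the substance is the same.
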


\begin{proof}
 To prove the corollary we define $H$ as the reduced homology of $X$ with a $C_{\infty}$-coalgebra structure such that $H$ becomes a $C_{\infty}$-model for $X$ and let $L$ be $\pi_*(Y)$ with an $SL_{\infty}$-structure such that $L$ is an $SL_{\infty}$-model for $Y$. If we turn $H$ into an $B_{op} \Omega_{op} \mathcal{COCOM}$ model using the morphism $\phi$ we can form the convolution algebra $Hom_{\K}(H,L)$. By Theorem \ref{thrmmappingmodel} this is a model for $Map_*(X,Y_{\Q})$, which proves the corollary. 
\end{proof}

\section{Application of the algebraic Hopf invariants: Rational homotopy theory}

In this section we discus how we can apply the algebraic Hopf invariants to rational homotopy theory and connect them to the work of Sinha and Walter in \cite{SW2}. In \cite{SW2}, Sinha and Walter study spaces of maps from the $n$-dimensional sphere to a rational target space $Y_{\Q}$. One of their main results is that they give a complete invariant of maps from the sphere to a rational space $Y_\Q$. In this section we will show how we recover their work by applying the algebraic Hopf invariants to $Map_*(S^n,Y_{\Q})$ and how to generalize this to general mapping spaces. 



The starting point is to use the functor from Theorem \ref{thrmQuillenstheorem} and apply this to the spaces $X$ and $Y$. In particular, two maps $f,g:X \rightarrow Y$ between $X$ and $Y$ are rationally homotopic if and only if $\mathcal{C} \lambda (f):\mathcal{C} \lambda (X) \rightarrow \mathcal{C} \lambda (Y)$ and $\mathcal{C} \lambda (g):\mathcal{C} \lambda (X) \rightarrow \mathcal{C} \lambda (Y)$ are homotopic as cocommutative coalgebra maps. 

When we apply this to maps from the sphere $S^n$ to a rational target space $Y_\Q$ of finite $\Q$-type, we get a map 
$$Map_*(S^n,Y_\Q) \rightarrow Hom_{CDGC}(\mathcal{C} \lambda (S^{n}),\mathcal{C} \lambda (Y_\Q)) \xrightarrow{mc_{\infty}} \mathcal{MC}(S^n,Y_\Q),$$
from the space of maps between $S^n$ and $Y_\Q$ to the moduli space of Maurer-Cartan elements of $Hom_{\K}(\mathcal{C} \lambda (S^{n}),\Omega_{\iota} \mathcal{C} \lambda (Y_\Q))$, which for simplicity we will denote by $\mathcal{MC}(S^n,Y_\Q)$. By Theorem \ref{thrmcompletehopfinvariants}, this map is a complete invariant of homotopy classes of maps, i.e. the map $mc_{\infty}:[S^n,Y_\Q]\rightarrow \mathcal{MC}(S^n,Y_\Q)$ is a bijection. Using this we can give an alternative completely algebraic proof of Theorem 2.10 in \cite{SW2}.

\begin{theorem}
 Let $Y_\Q$ be a 1-reduced rational space of finite $\Q$-type,  then there is an isomorphism of groups between $mc_{\infty}:\pi_n(Y_\Q) \rightarrow H_n( \Omega_{\iota} \mathcal{C} \lambda (Y_\Q))$.
\end{theorem}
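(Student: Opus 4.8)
The plan is to deduce the theorem as a special case of the completeness result, Theorem \ref{thrmcompletehopfinvariants}, together with the explicit identification of the invariant established in Section \ref{secmappingmodel}. First I would take $X = S^n$, so that $C = \mathcal{C}\lambda(S^n)$ is (quasi-isomorphic to) the $C_\infty$-coalgebra whose reduced homology is one-dimensional, concentrated in degree $n$, with trivial higher cooperations. Then $H_*(C)$ is spanned by a single generator $\alpha$ in degree $n$, so the convolution $L_\infty$-algebra $Hom_{\K}(H_*(C), H_*(\Omega_\iota \mathcal{C}\lambda(Y)))$ is, as a graded vector space, just a shift of $H_*(\Omega_\iota \mathcal{C}\lambda(Y))$: a linear map $H_*(C)\to H_*(\Omega_\iota \mathcal{C}\lambda(Y))$ is the same as choosing the image of $\alpha$, an element of $H_n(\Omega_\iota \mathcal{C}\lambda(Y))$. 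The key point is that because the source is one-dimensional and concentrated in a single degree, every bracket $l_k(\phi,\dots,\phi)$ for $k\geq 2$ involves the diagonal of $C$ landing in $\mathcal{C}(k)\otimes C^{\otimes k}$, which vanishes for degree reasons (the target would need support in degree $kn \geq 2n > n$), so the Maurer-Cartan equation reduces to $\partial(\phi) = 0$ and the gauge equivalence relation becomes trivial. Hence $\mathcal{MC}(H_*(C), H_*(\Omega_\iota \mathcal{C}\lambda(Y)))$ is literally the set $H_n(\Omega_\iota \mathcal{C}\lambda(Y))$, with no quotient.

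Next I would invoke the chain of bijections from the proof of Theorem \ref{thrmcompletehopfinvariants}, specialized to $C = \mathcal{C}\lambda(S^n)$ and $D = \mathcal{C}\lambda(Y)$: combined with Quillen's theorem (Theorem \ref{thrmQuillenstheorem}) identifying $[S^n, Y]$ (rationally) with $[\mathcal{C}\lambda(S^n), \mathcal{C}\lambda(Y)]$, this gives a bijection $\pi_n(Y) \cong [S^n,Y] \cong \mathcal{MC}(H_*(C),H_*(\Omega_\iota \mathcal{C}\lambda(Y))) \cong H_n(\Omega_\iota \mathcal{C}\lambda(Y))$. So the bijection of sets is essentially free. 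The remaining work is to check that this bijection is a group homomorphism, i.e. that the group operation on $\pi_n(Y)$ (concatenation of based maps from a sphere, i.e. the cogroup structure coming from the pinch map $S^n \to S^n \vee S^n$) corresponds under $mc_\infty$ to the additive structure on $H_n(\Omega_\iota \mathcal{C}\lambda(Y))$. The hard part will be exactly this: one must trace the pinch map through the functor $\mathcal{C}\lambda$, through the cobar construction $\Omega_\iota$, and through the chosen rectification maps $i$ and $p$, and verify that the induced operation on Maurer-Cartan elements is addition.

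To handle the group-homomorphism step I would argue as follows. The comultiplication $S^n \to S^n\vee S^n$ induces on $\mathcal{C}\lambda$ a map of coalgebras that, at the level of the single homology generator $\alpha$, sends $\alpha \mapsto \alpha_1 + \alpha_2$ where $\alpha_i$ are the generators of the two wedge summands; on the convolution algebra side, precomposition with this map sends a twisting morphism to the sum of its two "components," and since the relevant $L_\infty$-brackets vanish (again by the degree argument above — any mixed bracket would need source-support in degree $\geq 2n$, which $H_*(C)$ does not have), the induced Maurer-Cartan element of the wedge decomposes additively. Concretely: the target $\Omega_\iota \mathcal{C}\lambda(Y)$ is unchanged, the sphere's homology is one-dimensional in degree $n$, and the pinch map acts on $\mathrm{Hom}_{\K}(H_*(S^n), H_n(\Omega_\iota\mathcal{C}\lambda(Y)))\cong H_n(\Omega_\iota\mathcal{C}\lambda(Y))$ by $\phi \mapsto \phi + \phi$ under the diagonal, so the cogroup structure is identified with addition. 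I expect this last verification — matching the topological cogroup structure with linear addition after passing through several derived functors and rectifications — to be the main obstacle, since it requires enough control over the chosen maps $i$, $p$, and the quasi-isomorphism $\phi: C_\infty \to B_{op}\Omega_{op}\mathcal{COCOM}$ to see that they respect the relevant decompositions; one way to sidestep it is to appeal directly to Theorem 2.10 of \cite{SW2}, where the analogous compatibility is already established, and check that the identifications agree.
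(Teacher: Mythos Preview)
Your approach is essentially the same as the paper's: both take a one-dimensional model for $S^n$, observe that the higher $L_\infty$-brackets on the convolution algebra vanish so that the Maurer-Cartan moduli space is exactly $H_n(\Omega_\iota \mathcal{C}\lambda(Y))$, invoke the completeness theorem for the bijection, and then identify the pinch map with the diagonal to get additivity. One caution: your suggested fallback of citing Theorem 2.10 of \cite{SW2} would be circular, since the paper's explicit goal here is to give an \emph{alternative algebraic proof} of that very statement; fortunately the direct argument you outline (the diagonal on a one-dimensional source yields addition of Maurer-Cartan elements) is exactly what the paper does in a single line, so you do not need the fallback and should drop it.
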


\begin{proof}
To prove the theorem we want to use the completeness of the algebraic Hopf invariants, to do this we will first need to compute the moduli space of Maurer-Cartan elements. The moduli space of Maurer-Cartan elements is independent of the models chosen for $S^n$ and $Y_\Q$, since the set of homotopy classes of maps between $S^n$ and $Y_\Q$ is independent of the models chosen (as long as the models are of good enough quality, i.e. the model for $S^n$ should be cofibrant and the model for  $Y_\Q$ should be fibrant). Note that the Hopf invariant does depend on the models chosen and in particular on the choice of the maps $i$ and $p$.

To compute the moduli space of Maurer-Cartan elements we pick for $S^n$ the CDGC model given by a $1$-dimensional vector space in degree $n$ with the trivial comultiplicative structure, by abuse of notation we also denote this by $S^n$. A CDGC model for $Y_\Q$ will also by abuse of notation denoted by $Y_\Q$. As is shown in Chapter 22 of \cite{FHT}, a Lie model for $Y_\Q$ is then given by $\Omega_{\iota}Y_\Q$. The convolution $SL_{\infty}$ algebra between $S^n$ and $\Omega_{\iota} Y_\Q $ is now given by $Hom_{\K}(S^n,\Omega_{\iota} Y_\Q)$. Since $S^n$ is $1$-dimensional this is isomorphic as a chain complex to the $n$-fold desuspension of $Y_\Q$. Because $\Delta_{S^n}=0$, all the brackets $l_n$ on the convolution algebra $Hom_{\K}(S^n ,\Omega_{\iota}  Y_\Q)$ are zero, except for $l_1$ which is the desuspended differential of $\Omega_{\iota} Y_\Q$. 

The Maurer-Cartan equation for an element $x \in Hom_{\K}(S^n , \Omega_{\iota} Y_\Q)$ is therefore given by $d(x)=0$ and the space of Maurer-Cartan elements can be identified with the space of cycles in $(Y_\Q)_n$. The gauge equivalence relation on the set of Maurer-Cartan elements is given by the relation of homology, i.e. two cycles are gauge equivalent if and only if they are homologous. So as sets $\mathcal{MC}(S^n,Y_\Q)$ is isomorphic to $H_n(\Omega_{\iota} Y_\Q)$. 

Because the Hopf invariants are a complete invariant, there is an explicit bijection of sets from $[S^n,Y_\Q]$ to  $H_n(\Omega_{\iota} Y_\Q)$ given by $mc_{\infty}:[S^n,Y_\Q] \rightarrow H_n (\Omega_{\iota} Y_\Q)$. So what is left  to show is that this is an isomorphism of groups. The group structure on $\pi_n(Y_\Q)$ is coming from the pinch map $S^n\rightarrow S^n \vee S^n$ which is algebraically  modeled by  the diagonal map $\delta: \Q \rightarrow \Q \oplus \Q$ $\delta (x)=x \oplus x$. The group structure is then given by $(f*g)(x)=f(x) +g(x)$ for two maps $f,g:S^n\rightarrow Y_\Q$, this is exactly the same as the group structure on  the moduli space of Maurer-Cartan elements $\mathcal{MC}(S^n,Y_\Q)$, which is given by addition of the elements.
\end{proof}

The algebraic Hopf invariants have the advantage that they generalize the work of Sinha and Walter to more general spaces. In particular they are a complete invariant of rational homotopy classes of maps.

\begin{theorem}
 Let $f,g:X \rightarrow Y$ be maps from a finite CW-complex $X$ to a space $Y$, then $f$ and $g$ are rationally homotopic if and only if they have the same algebraic Hopf invariant, i.e. $mc_{\infty}(f)=mc_{\infty}(g)$. 
\end{theorem}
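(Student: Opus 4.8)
The plan is to deduce this theorem directly from the completeness of the algebraic Hopf invariants (Theorem~\ref{thrmcompletehopfinvariants}) together with Quillen's equivalence of homotopy categories (Theorem~\ref{thrmQuillenstheorem}). The strategy is to transport the homotopy-theoretic question about maps of spaces into an equivalent question about maps of cocommutative differential graded coalgebras, where the earlier machinery applies verbatim.

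First I would observe that by Theorem~\ref{thrmQuillenstheorem} the functor $\mathcal{C}\lambda$ induces an equivalence of homotopy categories between $1$-reduced rational spaces of finite $\Q$-type and cocommutative differential graded coalgebras. Since $X$ is a finite (hence finite $\Q$-type) simply connected CW-complex and $Y$ is a simply connected space, this equivalence applies and in particular $\mathcal{C}\lambda$ induces a bijection on morphism sets of the homotopy categories. Concretely, two maps $f,g:X \rightarrow Y$ are rationally homotopic if and only if $\mathcal{C}\lambda(f)$ and $\mathcal{C}\lambda(g)$ are homotopic as morphisms of cocommutative dg coalgebras; this is exactly the observation already recorded in the preceding paragraph of the excerpt. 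Here one must be slightly careful that "rationally homotopic" is interpreted as being homotopic in the model category of rational spaces, i.e. after $\Q$-localization, which is precisely the setting in which Quillen's equivalence is stated.

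Next I would apply Theorem~\ref{thrmcompletehopfinvariants} to the cocommutative cooperad $\mathcal{C} = \mathcal{COCOM}$ (or, as in Section~\ref{secmappingmodel}, to its bar-cobar resolution so that $C_\infty$-coalgebra models are covered), taking $C = \mathcal{C}\lambda(X)$ and $D = \mathcal{C}\lambda(Y)$. That theorem states that two morphisms of $\mathcal{C}$-coalgebras are homotopic in the model category of $\mathcal{C}$-coalgebras if and only if they have the same algebraic Hopf invariant $mc_\infty$. Combining this with the previous step: $f$ and $g$ are rationally homotopic $\iff$ $\mathcal{C}\lambda(f)$ and $\mathcal{C}\lambda(g)$ are homotopic coalgebra maps $\iff$ $mc_\infty(\mathcal{C}\lambda(f)) = mc_\infty(\mathcal{C}\lambda(g))$, and the latter is by definition the statement $mc_\infty(f) = mc_\infty(g)$ once we set $mc_\infty(f) := mc_\infty(\mathcal{C}\lambda(f))$.

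The main obstacle I anticipate is not any deep new argument but rather the bookkeeping needed to ensure the model-category hypotheses line up. Specifically: (i) one needs $\mathcal{C}\lambda(X)$ to be a cofibrant object and $\mathcal{C}\lambda(Y)$ (or its relevant fibrant replacement) to be fibrant in the coalgebra model structure of Section~\ref{secmodelcategories}, so that "homotopic in the homotopy category" genuinely coincides with the left/right homotopy relation used in Theorem~\ref{thrmcompletehopfinvariants}; by Proposition~\ref{propfibrantobjects} every coalgebra is cofibrant, which handles the source, and fibrant replacement of the target does not change the homotopy classes of maps out of a cofibrant source. (ii) One must pass from the strictly cocommutative coalgebra $\mathcal{C}\lambda(X)$ to a finite-dimensional $C_\infty$-coalgebra model when one wants the explicit convolution $L_\infty$-algebra description, invoking the homotopy transfer theorem and the identification from Theorem~\ref{thrmmappingmodel}; but for the bare statement about rational homotopy classes only the invariance established in Proposition~\ref{prophopfinv} and the completeness of Theorem~\ref{thrmcompletehopfinvariants} are needed, so this refinement can be deferred. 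Assembling these points, the theorem follows formally.
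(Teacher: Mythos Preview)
Your proposal is correct and follows essentially the same approach as the paper: the paper's proof is the one-liner ``The Theorem follows immediately from the completeness Theorem~\ref{thrmcompletehopfinvariants},'' which relies on exactly the translation via Quillen's functor $\mathcal{C}\lambda$ that you spell out (and that the paper records in the paragraph immediately preceding the theorem). Your additional discussion of the model-category bookkeeping is more explicit than what the paper provides, but the logical content is identical.
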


\begin{proof}
 The theorem follows immediately from the completeness of Theorem \ref{thrmcompletehopfinvariants}.
\end{proof}

Although the theorem gives a complete invariant of homotopy classes of maps, it might be hard to compute the actual values of this invariant when we are using the functor $\C\lambda$. In \cite{Wie2}, we make these invariants more computable for smooth manifolds, by replacing the  functor $\C\lambda$ by the de Rham complex. By doing this we can compute the map $mc$ by computing certain integrals. In that paper we also give some techniques for how to obtain information about the moduli space of Maurer-Cartan elements.


\bibliographystyle{plain}

\bibliography{bibliography}{}

\end{document}